\setlist[enumerate]{label=(\alph*)}
\crefname{figure}{Figure}{Figures}
\numberwithin{equation}{section}
\newcommand\norm[1]{\Vert#1\Vert}
\newcommand\abs[1]{\left\vert#1\right\vert}
\newcommand{\blambda}{{\boldsymbol{\lambda}}}
\newcommand\N{\mathbb{N}}
\newcommand\R{\mathbb{R}}
\newcommand\dx{\mathrm{d}x}
\newcommand{\sgn}{\operatorname{sgn}}
\DeclareMathOperator*{\esssup}{\operatorname{ess\,sup}}
\newcommand{\weakly}{\rightharpoonup}
\DeclareMathAlphabet{\mathpzc}{OT1}{pzc}{m}{it}
\newcommand\fsub{\widehat{\partial}}
\providecommand\given{\nonscript\;\delimsize|\nonscript\;}
\DeclarePairedDelimiterX\set[1]{\{}{\}}{#1}
\DeclarePairedDelimiterX\seq[1]{\{}{\}}{#1}
\DeclarePairedDelimiter\parens{(}{)}
\newtheorem{theorem}{Theorem}[section]
\newtheorem{lemma}[theorem]{Lemma}
\newtheorem{corollary}[theorem]{Corollary}
\newtheorem{remark}[theorem]{Remark}
\newtheorem{definition}[theorem]{Definition}
\newtheorem{example}[theorem]{Example}
\definecolor{mygreen}{rgb}{0.0,0.7,0.0}
\title{%
	Subdifferentiation of nonconvex sparsity-promoting functionals
	on Lebesgue spaces%
}
\author{%
	Patrick Mehlitz%
	\footnote{%
		Brandenburgische Technische Universit\"at Cottbus--Senftenberg,
		Institute of Mathematics,
		03046 Cottbus,
		Germany,
		\email{mehlitz@b-tu.de},
		\url{https://www.b-tu.de/fg-optimale-steuerung/team/dr-patrick-mehlitz},
		ORCID: 0000-0002-9355-850X%
		}
	\and
	Gerd Wachsmuth%
	\footnote{%
		Brandenburgische Technische Universit\"at Cottbus--Senftenberg,
		Institute of Mathematics,
		03046 Cottbus,
		Germany,
		\email{wachsmuth@b-tu.de},
		\url{https://www.b-tu.de/fg-optimale-steuerung/team/prof-gerd-wachsmuth},
		ORCID: 0000-0002-3098-1503%
		}
	}
\date{July 21, 2021}
	\def\and{ and }
	\def\footnote#1{}
\begin{document}
\maketitle

\begin{abstract}
	 Sparsity-promoting terms are incorporated into the objective functions of optimal
	 control problems in order to ensure that optimal controls vanish on large parts
	 of the underlying domain. Typical candidates for those terms are integral functions
	 on Lebesgue spaces based on the $\ell_p$-metric for $p\in[0,1)$ which are nonconvex
	 as well as non-Lipschitz and, thus, variationally challenging. 
	 In this paper, we derive exact formulas for the Fr\'{e}chet, limiting, and singular
	 subdifferential of these functionals.
	 These generalized derivatives can be used for the derivation of necessary optimality
	 conditions for optimal control problems comprising such sparsity-promoting terms.
\end{abstract}

\begin{keywords}
	Integral functionals, Sparsity-promoting functionals, Subdifferentiation, Variational analysis
\end{keywords}

\begin{msc}
	\mscLink{28B05}, \mscLink{49J52}, \mscLink{49J53}, \mscLink{49K27}
\end{msc}

\section{Introduction}

For a measurable and bounded set $\Omega\subset\R^d$
as well as real numbers $s\in[1,\infty)$ and $p\in[0,1)$, we investigate the functional
$q_{s,p}\colon L^s(\Omega)\to\R$ given by
\begin{equation}\label{eq:definition_of_sparsity_functional_p>0}
	\forall u\in L^s(\Omega)\colon\quad
	q_{s,p}(u)
	:=
	\int_\Omega \abs{u(x)}^p\,\mathrm dx
\end{equation}
for $p\in(0,1)$ and by
\begin{equation}\label{eq:definition_of_sparsity_functional_p=0}
	\forall u\in L^s(\Omega)\colon\quad
	q_{s,0}(u)
	:=
	\int_\Omega \abs{u(x)}_0\,\mathrm dx
\end{equation}
for $p=0$ where we used the mapping $\abs{\cdot}_0\colon\R\to\R$ defined as follows:
\[
	\forall y\in\R\colon\quad
	\abs{y}_0
	:=
	\begin{cases}
		0	&	\text{if $y=0$,}\\
		1	&	\text{otherwise.}
	\end{cases}
\]
We note that $q_{s,0}(u)=\blambda(\{u\neq 0\})$ holds for all $u\in L^s(\Omega)$, i.e.,
$q_{s,0}$ measures the size of the \emph{support} of its argument.
In optimal control, the functional $q_{s,p}$ is popular due to its property to
be \emph{sparsity-promoting}, 
see e.g.\ \cite{CasasWachsmuth2020,ItoKunisch2014,Merino2019,NatemeyerWachsmuth2020,Wachsmuth2019},
i.e., to enforce control functions to be zero on large parts of their domain.
This property of $q_{s,p}$ is induced by the fact that the 
mappings $y\mapsto|y|^p$, $p\in(0,1)$, and $y\mapsto|y|_0$ possess a uniquely determined global
minimizer as well as infinite growth at zero.
Let us underline that the case $p=1$, in which the associated mapping $q_{s,1}$ given as in
\eqref{eq:definition_of_sparsity_functional_p>0} reduces to the (convex) norm of
the space $L^1(\Omega)$, is well-studied in the literature, see e.g.\
\cite{CasasHerzogWachsmuth2012,Stadler2009,VossenMaurer2006,WachsmuthWachsmuth2011}.
Clearly, $q_{s,p}$ is not convex for $p\in[0,1)$.

This paper is devoted to the computation of generalized derivatives of the mapping $q_{s,p}$.
More precisely, we aim for the derivation of exact formulas for its so-called Fr\'{e}chet,
limiting, and singular subdifferential, see \cite{Mordukhovich2006}, which can be used in order to
characterize local minimizers of optimal control problems involving $q_{s,p}$ within the objective
function.
The investigation of calculus rules for subdifferentials of nonconvex integral functions 
on Lebesgue spaces has been an active topic of research throughout the last decades.
Exemplary, we would like to mention
\cite{Clarke1983,Giner2017,GinerPenot2018,MordukhovichSagara2018} 
where calculus rules were established in situations where the
integrand satisfies Lipschitzianity assumptions.
We note, however,
that these assumptions typically do \emph{not} hold for the integrands of our interest.
In \cite{CorreaHantoutePerezAros2020}, some upper estimates for the subdifferentials of integral
functions with potentially non-Lipschitzian integrand have been obtained.
Finally, we would like to mention the papers \cite{Chieu2009,Penot2011} where 
some emphasis is laid on nonconvex integral functions
on $L^1(\Omega)$ without assuming any Lipschitzianity of the integrand.
However, as far as we can see, the available results from the literature are of
limited practical use for the actual computation of the subdifferentials associated
with $q_{s,p}$. That is why we directly compute the subdifferentials of interest from their
respective definition. Therefore, we distinguish the cases $p=0$ and $p\in(0,1)$ where
slightly different arguments are necessary in order to proceed.

The remainder of the paper is organized as follows: In \cref{sec:preliminaries}, we comment
on the basic notation used in this paper and put some special emphasis on Lebesgue spaces
and the underlying tools of variational analysis. Furthermore, we briefly investigate the
continuity properties of $q_{s,p}$. Finally, we introduce and study the concept of \emph{slowly
decreasing functions} on Lebesgue spaces which will be used to characterize the points where the
Fr\'{e}chet subdifferential of $q_{s,0}$ is nonempty. In \cref{sec:p=0}, we first compute the
Fr\'{e}chet subdifferential of $q_{s,0}$ and then turn our attention to the characterization
of the limiting and singular subdifferential of this functional. 
In a similar way, we proceed in \cref{sec:0<p<1} in order to address the functional $q_{s,p}$ for
$p\in(0,1)$. Some concluding remarks close the paper in \cref{sec:concluding_remarks}.

\section{Preliminaries}\label{sec:preliminaries}

\subsection{Basic notation}

For a sequence $\{z_k\}_{k\in\N}\subset Z$ in a real Banach space $Z$ and some point $z\in Z$,
we exploit $z_k\to z$ ($z_k\weakly z$) in order to denote that $\{z_k\}_{k\in\N}$ converges
strongly (weakly) to $z$. Furthermore, for a sequence $\{t_k\}_{k\in\N}\subset\R$
and $\alpha\in\R$, $t_k\searrow \alpha$
means $\{t_k\}_{k\in\N}\subset(\alpha,\infty)$ and $t_k\to \alpha$. Similarly, we interpret
$t_k\nearrow \alpha$.

Whenever a function $f\colon Z\to\R$ is Fr\'{e}chet differentiable at $z\in Z$, its Fr\'{e}chet
derivative will  be denoted by $f'(z)\in Z^*$ where $Z^*$ is the (topological) dual of $Z$.

\subsection{Lebesgue spaces}

Throughout this paper,
we assume that
$\Omega\subset\R^d$ is Lebesgue-measurable
with positive and finite Lebesgue measure.
We equip $\Omega$ with the $\sigma$-algebra of all Lebesgue-measurable subsets
of $\Omega$ as well as Lebesgue's measure $\blambda$.
For brevity, we will suppress the prefix \emph{Lebesgue} in the course of the manuscript.
The characteristic function of a measurable set $A\subset\Omega$, being $1$ on $A$ while vanishing on
$\Omega\setminus A$, will be denoted by $\chi_A\colon\Omega\to\{0,1\}$.
Whenever $u\colon\Omega\to\R$ is measurable, the associated function $\sgn u\colon \Omega\to\set{-1,0,1}$,
which assigns to each $x\in\Omega$ the sign of $u(x)$, is measurable as well.
For $s\in[1,\infty)$, we use the classical Lebesgue spaces $L^s(\Omega)$ of all
(equivalence classes of)
real-valued, measurable, $s$-integrable functions which are equipped with the classical norm
\[
	\forall u\in L^s(\Omega)\colon\quad
	\norm{u}_s:=\left(\int_\Omega|u(x)|^s\,\mathrm dx\right)^{1/s}.
\]
For the purpose of completeness, let us recall that $L^\infty(\Omega)$ comprises all (equivalence
classes of) real-valued, measurable functions which are essentially bounded, and that this set becomes
a Banach space when equipped with the norm
\[
	\forall u\in L^\infty(\Omega)\colon\qquad
	\norm{u}_\infty:=\esssup\limits_{x\in\Omega}|u(x)|.
\]
Fixing $s\in[1,\infty]$, for each function $u\in L^s(\Omega)$, we use
\[
	\{u\neq 0\}:=\{x\in\Omega\,|\,u(x)\neq 0\}
	\qquad\text{and}\qquad
	\{u= 0\}:=\{x\in\Omega\,|\,u(x)=0\}
\]
for brevity of notation
and note that these sets are measurable and well-defined up to sets of measure zero.
Similarly, we define the measurable sets $\{u\geq 0\}$, $\{u>0\}$, $\{u\leq 0\}$, and $\{u<0\}$
as well as sets with non-zero or bilateral bounds.
Throughout the paper, $r\in(1,\infty]$ given by $\tfrac1s+\tfrac1r=1$ is the conjugate
coefficient associated with $s\in[1,\infty)$. It is well known that $L^r(\Omega)$ can be identified with the (topological) dual space
of $L^s(\Omega)$. 
Finally, if $\Omega'\subset\Omega$ is measurable, we exploit the notation
\[
	\forall u\in L^s(\Omega),\,\forall \nu\in(0,s]\colon\quad
	\norm{u}_{\nu,\Omega'}:=\left(\int_{\Omega'}|u(x)|^\nu\,\mathrm dx\right)^{1/\nu}
\]
and note that $\norm{u}_{\nu,\Omega'}$ is a finite number for each $u\in L^s(\Omega)$ and each
$\nu\in(0,s]$.

We briefly mention that $\Omega \subset \R^d$
can be replaced by an arbitrary measure space $(\Omega,\Sigma,\mu)$
such that $\mu$ is finite, separable, and non-atomic.
Indeed, such a measure space is isomorphic to
the Lebesgue measure on some interval,
see, e.g., \cite[Theorem~9.3.4]{Bogachev2007}.

\subsection{Tools from variational analysis}\label{sec:variational_analysis}

In this paper, we are concerned with the computation of subdifferentials 
associated with the functional $q_{s,p}$
with $s\in[1,\infty)$ and $p\in[0,1)$.
Recall that, for a given point $\bar u\in L^s(\Omega)$, the so-called Fr\'{e}chet (or regular)
subdifferential of $q_{s,p}$ at $\bar u$ is defined by means of
\begin{equation*}
	\fsub q_{s,p}(\bar u)
	:=
	\set*{
		\eta \in L^r(\Omega)
		\given
		\liminf_{\norm{h}_s \searrow 0}
		\frac{
			q_{s,p}(\bar u + h) - q_{s,p}(\bar u) - \int_\Omega \eta(x)h(x)\,\mathrm dx
		}{
			\norm{h}_s
		}
		\ge 0
	}
	.
\end{equation*}
Furthermore, in case $s>1$, the limiting (or Mordukhovich) and the singular
subdifferential of $q_{s,p}$
at $\bar u$ are defined as stated below:
\begin{align*}
	\partial q_{s,p}(\bar u)
	&:=
	\left\{\eta\in L^r(\Omega)\,\middle|\,
		\begin{aligned}
			&\exists\{u_k\}_{k\in\N}\subset L^s(\Omega),\,
				\exists\{\eta_k\}_{k\in\N}\subset L^r(\Omega)
				\colon\\
			&\quad u_k\to\bar u\text{ in }L^s(\Omega),\,
				q_{s,p}(u_k)\to q_{s,p}(\bar u),\\
			&\quad
				\eta_k\weakly\eta\text{ in }L^r(\Omega),\eta_k\in\fsub q_{s,p}(u_k)\,\forall k\in\N
		\end{aligned}
	\right\},
	\\
	\partial^\infty q_{s,p}(\bar u)
	&:=
	\left\{\eta\in L^r(\Omega)\,\middle|\,
		\begin{aligned}
			&\exists\{u_k\}_{k\in\N}\subset L^s(\Omega),\,
				\exists\{t_k\}_{k\in\N}\subset(0,\infty),\\
			&\exists\{\eta_k\}_{k\in\N}\subset L^r(\Omega)
				\colon\\
			&\quad u_k\to\bar u\text{ in }L^s(\Omega),\,
				q_{s,p}(u_k)\to q_{s,p}(\bar u),\,t_k\searrow 0,\,\\
			&\quad
				t_k\eta_k\weakly\eta\text{ in }L^r(\Omega),\,\eta_k\in\fsub q_{s,p}(u_k)\,\forall k\in\N
		\end{aligned}
	\right\}
	.
\end{align*}
Noting that $L^1(\Omega)$ is not a so-called Asplund space,
i.e., a Banach space where every convex, continuous
functional is \emph{generically} Fr\'{e}chet differentiable, one cannot simply define the limiting
and singular subdifferential of $q_{1,p}$ as a set-limit of the associated Fr\'{e}chet subdifferential
while preserving its variational properties.
Instead, the larger so-called $\varepsilon$-subdifferential of $q_{1,p}$ would be needed
within the limiting procedure. 
We would like to point out that working with the limiting variational tools in spaces
which do not possess the Asplund property has been shown to be problematic.
A detailed discussion can be found in \cite[Section~2.2]{Mordukhovich2006}.
Nevertheless, due to \cite[Theorem~3.2]{Chieu2009}, we have
$\partial q_{1,p}(\bar u)=\fsub q_{1,p}(\bar u)$ for all $\bar u\in L^1(\Omega)$ 
and all $p\in[0,1)$ 
(with respect to the correct definition of the limiting subdifferential in non-Asplund spaces), 
and, thus, our results from
\cref{thm:characterization_Frechet_q10,thm:characterization_Frechet_q1p}
yield explicit formulas for the limiting subdifferential of $q_{1,p}$ as well.
In variational analysis, the limiting subdifferential has turned out to be a valuable tool
for the derivation of necessary optimality conditions for constrained optimization problems,
see \cite[Section~5]{Mordukhovich2006}.
On the other hand, the singular subdifferential provides a measure of Lipschitzianity of nonsmooth functionals. Applied in the context of this paper, 
thanks to \cite[Corollary~2.39, Theorem~3.52]{Mordukhovich2006} and
the continuity properties of $q_{s,p}$, see \cref{sec:continuity}, we have the following result.
\begin{lemma}\label{lem:Lipschitzness_s>1}
	Fix $s\in(1,\infty)$ and $p\in[0,1)$.
	Then $q_{s,p}$ is Lipschitz continuous at some point $\bar u\in L^s(\Omega)$ if and only if
	the following conditions hold:
	\begin{enumerate}
		\item we have $\partial^\infty q_{s,p}(\bar u)=\{0\}$ and
		\item\label{item:SNEC}
			 for sequences $\{u_k\}_{k\in\N}\subset L^s(\Omega)$, $\{t_k\}_{k\in\N}\subset(0,\infty)$,
			and $\{\eta_k\}_{k\in\N}\subset L^r(\Omega)$ satisfying $u_k\to\bar u$ in $L^s(\Omega)$,
			$q_{s,p}(u_k)\to q_{s,p}(\bar u)$,
			$t_k\searrow 0$, $t_k\eta_k\weakly 0$ in $L^r(\Omega)$, and $\eta_k\in\fsub q_{s,p}(u_k)$
			for each $k\in\N$, we already have $t_k\eta_k\to 0$ in $L^r(\Omega)$.
	\end{enumerate}
\end{lemma}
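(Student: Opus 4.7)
The strategy is a direct reduction to the characterization of local Lipschitz continuity of lower semicontinuous functionals on Asplund spaces via the singular subdifferential and sequential normal epi-compactness (SNEC), exactly as announced in the statement through the references to \cite[Corollary~2.39, Theorem~3.52]{Mordukhovich2006}. The whole point is that the work specific to $q_{s,p}$ has already been done (or will be done) in \cref{sec:continuity}; what remains is to verify that the general machinery applies.

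First I would note that, for $s\in(1,\infty)$, the space $L^s(\Omega)$ is reflexive and hence Asplund, so the limiting and singular subdifferentials of $q_{s,p}$ defined above coincide with the standard constructions of \cite{Mordukhovich2006}, and the complications mentioned for the non-Asplund case $s=1$ do not arise. Second, the continuity properties derived in \cref{sec:continuity} imply in particular that $q_{s,p}$ is lower semicontinuous at $\bar u$, so the hypotheses needed to invoke the cited results are in place, and the convergence $q_{s,p}(u_k)\to q_{s,p}(\bar u)$ in the definitions of $\partial q_{s,p}(\bar u)$ and $\partial^\infty q_{s,p}(\bar u)$ is harmless.

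Next I would apply \cite[Corollary~2.39]{Mordukhovich2006}: a lower semicontinuous functional $f$ on an Asplund space is locally Lipschitzian at $\bar u$ if and only if $f$ is SNEC at $\bar u$ and $\partial^\infty f(\bar u)=\{0\}$. To translate the SNEC property into the concrete condition~\ref{item:SNEC}, I would invoke \cite[Theorem~3.52]{Mordukhovich2006}, which, in Asplund spaces, reformulates SNEC of $f$ at $\bar u$ in terms of Fr\'echet subgradients exactly as stated in~\ref{item:SNEC}: every sequence $\eta_k\in\fsub f(u_k)$ with $u_k\to\bar u$, $f(u_k)\to f(\bar u)$, $t_k\searrow 0$, and $t_k\eta_k\weakly 0$ must actually satisfy $t_k\eta_k\to 0$ strongly. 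Matching this characterization with our definition of $\partial^\infty q_{s,p}(\bar u)$ yields the claimed equivalence.

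The main (and essentially only) obstacle is bibliographic rather than mathematical: one has to check carefully that the formulations of SNEC via normal cones to the epigraph used in \cite{Mordukhovich2006} indeed specialize, under lower semicontinuity, to the Fr\'echet-subgradient formulation in~\ref{item:SNEC}, and that \cite[Corollary~2.39]{Mordukhovich2006} is stated for extended-real-valued lsc functionals in the generality we need. Once these are in place, no computation specific to the integrand of $q_{s,p}$ is required beyond the continuity already established in \cref{sec:continuity}.
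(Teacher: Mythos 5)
Your proposal matches the paper's own justification exactly: the lemma is stated there without a separate proof, being derived precisely from \cite[Corollary~2.39, Theorem~3.52]{Mordukhovich2006} together with the lower semicontinuity/continuity of $q_{s,p}$ established in \cref{sec:continuity} and the Asplund property of $L^s(\Omega)$ for $s\in(1,\infty)$. Nothing is missing beyond the bibliographic verification you already flag.
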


Let us note that the property from \cref{lem:Lipschitzness_s>1}~\ref{item:SNEC} is referred to
as \emph{sequential normal epi-compactness} of $q_{s,p}$ at $\bar u$ in variational analysis.
The latter is related to the so-called \emph{sequential normal compactness} property of sets which
has been shown to be problematic in Lebesgue spaces, see \cite[Section~4]{Mehlitz2019}.

\subsection{Continuity properties of sparsity-promoting functionals}\label{sec:continuity}

In this section, we briefly comment on the continuity properties of the sparsity-promoting functional
$q_{s,p}$ for $s\in[1,\infty)$ and $p\in[0,1)$.
We split our investigation into two lemmas.
\begin{lemma}\label{lem:lower_semicontinuity_of_qs0}
	For each $s\in[1,\infty)$, $q_{s,0}$ is lower semicontinuous.
\end{lemma}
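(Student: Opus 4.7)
The plan is to reduce lower semicontinuity to a pointwise statement via a subsequence argument and then apply Fatou's lemma. Fix an arbitrary sequence $\{u_k\}_{k\in\N}\subset L^s(\Omega)$ with $u_k\to\bar u$ in $L^s(\Omega)$ and set $L:=\liminf_{k\to\infty}q_{s,0}(u_k)$. The goal is to show $q_{s,0}(\bar u)\le L$.

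First, pass to a subsequence (not relabeled) along which $q_{s,0}(u_k)\to L$. Since this subsequence still converges to $\bar u$ in $L^s(\Omega)$, by the classical Riesz--Fischer argument one can extract a further subsequence (again not relabeled) along which $u_k(x)\to\bar u(x)$ for almost every $x\in\Omega$. The key observation is then the pointwise estimate
\begin{equation*}
  \chi_{\{\bar u\ne 0\}}(x)\;\le\;\liminf_{k\to\infty}\chi_{\{u_k\ne 0\}}(x)
  \qquad\text{for a.e.\ }x\in\Omega,
\end{equation*}
which follows because at any $x$ where $\bar u(x)\ne 0$ and pointwise convergence holds, we have $u_k(x)\ne 0$ for all sufficiently large $k$, so the right-hand side equals $1$, while at points with $\bar u(x)=0$ the left-hand side is $0$.

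Now apply Fatou's lemma to the sequence of nonnegative measurable functions $\chi_{\{u_k\ne 0\}}$ over the finite-measure set $\Omega$:
\begin{equation*}
  q_{s,0}(\bar u)
  =\int_\Omega\chi_{\{\bar u\ne 0\}}(x)\,\dx
  \;\le\;\int_\Omega\liminf_{k\to\infty}\chi_{\{u_k\ne 0\}}(x)\,\dx
  \;\le\;\liminf_{k\to\infty}\int_\Omega\chi_{\{u_k\ne 0\}}(x)\,\dx
  \;=\;L.
\end{equation*}
Since the original sequence was arbitrary, this establishes lower semicontinuity of $q_{s,0}$ at every point of $L^s(\Omega)$.

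There is no real obstacle here: the only subtle point is the routine two-step subsequence extraction needed because $L^s$-convergence only gives a.e.-convergence along a subsequence, which is why we first thin the sequence to realize the liminf and only afterwards pass to an a.e.-convergent subsequence. Everything else is a direct application of Fatou's lemma, exploiting the finiteness of $\blambda(\Omega)$ which guarantees integrability of the characteristic functions involved.
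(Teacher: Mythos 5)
Your proof is correct and follows essentially the same route as the paper's: thin the sequence to realize the liminf, pass to a further a.e.-convergent subsequence, use the pointwise lower semicontinuity of $y\mapsto\abs{y}_0$ (equivalently your characteristic-function estimate), and conclude by Fatou's lemma. No discrepancies to report.
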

\begin{proof}
	Fix a sequence $\{u_k\}_{k\in\N}\subset L^s(\Omega)$ converging to some $\bar u\in L^s(\Omega)$.
	For subsequent use, we set $\alpha:=\liminf_{k\to\infty}q_{s,0}(u_k)$.
	We pick a subsequence (without relabeling) with $q_{s,0}(u_k)\to\alpha$ and assume 
	without loss of generality (w.l.o.g.) that
	$\{u_k\}_{k\in\N}$ converges pointwise almost everywhere to $\bar u$
	along this subsequence.	
	Noting that $y\mapsto|y|_0$ is lower semicontinuous, we find
	\begin{align*}
		\alpha
		=
		\lim\limits_{k\to\infty} q_{s,0}(u_k)
		&=
		\lim\limits_{k\to\infty}\int_\Omega|u_k(x)|_0\,\mathrm dx\\
		&\geq
		\int_\Omega\left(\liminf\limits_{k\to\infty}|u_k(x)|_0\right)\,\mathrm dx
		\geq
		\int_\Omega|\bar u(x)|_0\,\mathrm dx
		=
		q_{s,0}(\bar u)
	\end{align*}
	from Fatou's lemma, and this shows the claim.
\end{proof}
It is also clear that $q_{s,0}$ is not continuous
at points $\bar u \in L^s(\Omega)$ with $\blambda(\{\bar u= 0\}) > 0$.
\begin{lemma}\label{lem:uniform_continuity_of_qsp}
	For each $s\in[1,\infty)$ and $p\in(0,1)$, $q_{s,p}$ is uniformly continuous.
\end{lemma}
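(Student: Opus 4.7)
The plan is to establish uniform continuity by proving the stronger statement that $q_{s,p}$ is globally Hölder continuous with exponent $p$. The whole argument rests on two ingredients: a pointwise elementary inequality for the integrand and Hölder's inequality to pass from $L^p$-type expressions to the norm of $L^s(\Omega)$.

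First, I would record the pointwise inequality
\[
	\bigl|\,|a|^p - |b|^p\,\bigr| \leq |a-b|^p
	\qquad \forall a,b \in \R,
\]
which follows from the fact that $t \mapsto t^p$ is nondecreasing, concave on $[0,\infty)$ with $0^p = 0$, hence subadditive there, combined with the triangle inequality $|a| \leq |a-b| + |b|$. Applying this pointwise to $u,v \in L^s(\Omega)$ and integrating yields
\[
	|q_{s,p}(u) - q_{s,p}(v)|
	\leq
	\int_\Omega \bigl|\,|u(x)|^p - |v(x)|^p\,\bigr|\,\mathrm dx
	\leq
	\int_\Omega |u(x)-v(x)|^p\,\mathrm dx.
\]

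Next, because $s \geq 1 > p$, I would apply Hölder's inequality with conjugate exponents $s/p$ and $s/(s-p)$ to the right-hand side, writing $|u-v|^p = |u-v|^p \cdot 1$:
\[
	\int_\Omega |u(x)-v(x)|^p\,\mathrm dx
	\leq
	\left(\int_\Omega |u(x)-v(x)|^s\,\mathrm dx\right)^{p/s}
	\blambda(\Omega)^{(s-p)/s}
	=
	\blambda(\Omega)^{(s-p)/s}\,\norm{u-v}_s^p.
\]
Since $\blambda(\Omega)$ is finite by the standing assumption, this shows that $q_{s,p}$ is Hölder continuous of order $p$ on all of $L^s(\Omega)$ with constant $\blambda(\Omega)^{(s-p)/s}$, which in particular gives uniform continuity.

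No step is really a serious obstacle here, but the only thing that requires a moment's care is the subadditivity inequality $|a+b|^p \le |a|^p+|b|^p$ that underlies the pointwise estimate; this is exactly where the restriction $p \in (0,1)$ is used, and it is also what prevents the argument from extending to $p=0$, consistent with the discontinuity of $q_{s,0}$ noted just above the lemma.
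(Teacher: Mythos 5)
Your proof is correct, and it takes a genuinely different route in its second half. The paper shares your first step (the subadditivity estimate $\bigl||z_1|^p-|z_2|^p\bigr|\leq|z_1-z_2|^p$, yielding $|q_{s,p}(u)-q_{s,p}(v)|\leq q_{s,p}(u-v)$), but then finishes softly: it first establishes continuity of $q_{s,p}$ via the continuity of the Nemytskii operator induced by $y\mapsto|y|^p$ (citing a result of Goldberg, Kampowsky, and Tr\"oltzsch), and only uses continuity at $0$ together with the subadditivity estimate to conclude uniform continuity, without any explicit modulus. You instead bound $q_{s,p}(u-v)$ directly by H\"older's inequality with exponents $s/p$ and $s/(s-p)$ (legitimate since $s\geq 1>p$ and $\blambda(\Omega)<\infty$), obtaining the quantitative estimate $|q_{s,p}(u)-q_{s,p}(v)|\leq\blambda(\Omega)^{(s-p)/s}\norm{u-v}_s^p$. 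Your argument is more elementary (no appeal to Nemytskii operator theory) and strictly stronger, as it delivers global H\"older continuity of order $p$ with an explicit constant rather than mere uniform continuity; the paper's route, on the other hand, reuses a general-purpose continuity result and avoids the (admittedly short) H\"older computation. Both are valid; yours is arguably the cleaner self-contained proof.
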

\begin{proof}
	Noting that $\varphi\colon\R\to\R$ given by $\varphi(y):= |y|^p$ for all
	$y\in\R$ is continuous and that the associated Nemytskii operator
	maps $L^s(\Omega)$ into $L^1(\Omega)$, the latter is continuous due to
	\cite[Theorem~4]{GoldbergKampowskyTroeltzsch1992}. 
	Furthermore, integration of functions is a linear, continuous operation on $L^1(\Omega)$.
	Thus, $q_{s,p}$ is the composition of two continuous mappings and, thus, continuous.
	
	We note that $\varphi$ is subadditive, i.e., $|y_1+y_2|^p\leq|y_1|^p+|y_2|^p$ holds
	for all $y_1,y_2\in\R$. Applying this inequality first to $y_1:=z_1-z_2$ as well as $y_2:=z_2$ 
	and second to $y_1:=z_2-z_1$ as well as $y_2:=z_1$ for $z_1,z_2\in\R$ yields the estimate
	\[
		\forall z_1,z_2\in\R\colon\quad
		\bigl| |z_1|^p-|z_2|^p\bigr|\leq |z_1-z_2|^p.
	\]
	
	Fix an arbitrary $\varepsilon>0$. By continuity of $q_{s,p}$ at $0$, we find $\delta>0$ such that
	$q_{s,p}(u)<\varepsilon$ holds for all $u\in L^s(\Omega)$ such that $\norm{u}_s<\delta$.
	Thus, for any two functions $u,v\in L^s(\Omega)$ satisfying $\norm{u-v}_s<\delta$, we find
	\begin{align*}
		|q_{s,p}(u)-q_{s,p}(v)|
		&=
		\left|\int_\Omega\bigl(|u(x)|^p-|v(x)|^p\bigr)\,\mathrm dx\right|
		\leq
		\int_\Omega\bigl||u(x)|^p-|v(x)|^p\bigr|\,\mathrm dx\\
		&\leq
		\int_\Omega |u(x)-v(x)|^p\,\mathrm dx
		=
		q_{s,p}(u-v)
		<
		\varepsilon,
	\end{align*}
	and this yields uniform continuity of $q_{s,p}$.
\end{proof}

The inherent nonconvexity of the functional $q_{s,p}$ indicates that it is not weakly lower
semicontinuous. Thus, one has to face essential issues regarding the existence of solutions
whenever $q_{s,p}$ is used as an additional sparsity-promoting term in the objective function
of an optimal control problem where the controls are chosen from a Lebesgue space,
see \cite{ItoKunisch2014,Wachsmuth2019} where this is discussed in detail.

\subsection{Slowly decreasing functions}\label{sec:s_SD_functions}

Fix $s\in(1,\infty)$.
In the course of the paper, it will become clear that the Fr\'{e}chet subdifferential
of $q_{s,0}$ at some point $\bar u\in L^s(\Omega)$ such that $\{\bar u\neq 0\}$ is
of positive measure is likely to be empty if $\bar u$ approaches zero on $\{\bar u\neq 0\}$
too fast. In this regard, the following definition aims to characterize functions tending to zero
slowly enough on their support. 
\begin{definition}\label{def:slowly_decrasing_functions}
	Fix $s\in(1,\infty)$ as well as a function $\bar u\in L^s(\Omega)$.
	We call $\bar u$ \emph{order $s$ slowly decreasing}
	(an \emph{$s$-SD function} for short) whenever
	for each sequence $\{\Omega_k\}_{k\in\N}$ of
	measurable subsets of $\{\bar u\neq 0\}$, we have
	\[
		\blambda(\Omega_k)\searrow 0
		\qquad\Longrightarrow\qquad
		\frac{\blambda(\Omega_k)}{\norm{\bar u}_{s,\Omega_k}}
		\searrow 0.
	\] 
\end{definition}
Note that whenever $\bar u\in L^s(\Omega)$ vanishes almost everywhere
on $\Omega$, then it is trivially $s$-SD
since there are no measurable subsets $\Omega_k$ of $\set{\bar u \ne 0}$
with positive measure.

In the subsequently stated lemma, we present a sufficient condition implying that
a given function $\bar u\in L^s(\Omega)$ is an $s$-SD function. 
Its proof follows straight from the definition and is, therefore, omitted.
\begin{lemma}\label{lem:slowly_decreasing_functions}
	Fix $s\in(1,\infty)$ and $\bar u\in L^s(\Omega)$.
	Assume that $\bar u$ is bounded away from zero on $\{\bar u\neq 0\}$, i.e., 
	that one can find $\varepsilon>0$ such that $\blambda(\{0<\abs{\bar u}<\varepsilon\})=0$ is valid.
	Then $\bar u$ is an $s$-SD function.
\end{lemma}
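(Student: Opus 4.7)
The plan is direct: use the lower bound on $\lvert\bar u\rvert$ to bound $\lVert\bar u\rVert_{s,\Omega_k}$ from below by a power of $\blambda(\Omega_k)$, and observe that this power beats $\blambda(\Omega_k)$ in the numerator.

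Concretely, I would fix $\varepsilon>0$ with $\blambda(\{0<\lvert\bar u\rvert<\varepsilon\})=0$, and let $\{\Omega_k\}_{k\in\N}$ be any sequence of measurable subsets of $\{\bar u\ne 0\}$ with $\blambda(\Omega_k)\searrow 0$. Since $\Omega_k\subset\{\bar u\ne 0\}$ and the set $\{0<\lvert\bar u\rvert<\varepsilon\}$ has measure zero, we have $\lvert\bar u(x)\rvert\ge\varepsilon$ for almost every $x\in\Omega_k$. This gives
\[
	\norm{\bar u}_{s,\Omega_k}
	=\left(\int_{\Omega_k}\lvert\bar u(x)\rvert^s\,\mathrm dx\right)^{1/s}
	\ge \varepsilon\,\blambda(\Omega_k)^{1/s},
\]
and in particular $\norm{\bar u}_{s,\Omega_k}>0$ whenever $\blambda(\Omega_k)>0$, so the ratio appearing in \cref{def:slowly_decrasing_functions} is well defined. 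Dividing then yields
\[
	0<\frac{\blambda(\Omega_k)}{\norm{\bar u}_{s,\Omega_k}}
	\le\frac{\blambda(\Omega_k)}{\varepsilon\,\blambda(\Omega_k)^{1/s}}
	=\frac{1}{\varepsilon}\,\blambda(\Omega_k)^{(s-1)/s}.
\]
Since $s>1$, the exponent $(s-1)/s$ is positive, so the right-hand side tends to zero as $\blambda(\Omega_k)\searrow 0$. Consequently $\blambda(\Omega_k)/\norm{\bar u}_{s,\Omega_k}\searrow 0$, which is exactly the defining property of an $s$-SD function.

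There is no substantial obstacle here: the result is essentially an immediate consequence of the elementary estimate $\norm{\bar u}_{s,\Omega_k}\ge\varepsilon\,\blambda(\Omega_k)^{1/s}$ combined with $s>1$. This is presumably why the authors chose to omit the proof.
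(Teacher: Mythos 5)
Your proof is correct and is precisely the ``straight from the definition'' argument the authors had in mind when omitting the proof: the pointwise lower bound $\abs{\bar u}\ge\varepsilon$ a.e.\ on $\Omega_k$ gives $\norm{\bar u}_{s,\Omega_k}\ge\varepsilon\,\blambda(\Omega_k)^{1/s}$, and $s>1$ makes the resulting exponent $1-1/s$ positive. Nothing is missing.
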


The following example shows that the condition from 
\cref{lem:slowly_decreasing_functions}
is only sufficient but not necessary for the property of $\bar u\in L^s(\Omega)$
to be an $s$-SD function. 
\begin{example}\label{ex:condition_nonemptiness_Frechet_qs0}
	Consider $\Omega:=(0,1)$ and the function
	$\bar u\in L^s(\Omega)$, $s>1$, given by $\bar u(x):=x^\alpha$ for each $x\in\Omega$ 
	and some $\alpha>0$.
	We want to check for which choices of $s$ and $\alpha$, $\bar u$
	actually is an $s$-SD function. Considering sets $\Omega_k\subset\Omega$ 
	of positive measure such that
	$\blambda(\Omega_k)$ is fixed, the quotient $\blambda(\Omega_k)/\norm{\bar u}_{s,\Omega_k}$
	gets maximal whenever $\bar u$ is as small as possible on $\Omega_k$. 
	Thus, by strict monotonicity of $\bar u$, it suffices to consider sequences of sets
	$\{\Omega_k\}_{k\in\N}$ of the form $\Omega_k:=(0,t_k)$ where $\{t_k\}_{k\in\N}$
	satisfies $t_k\searrow 0$.
	In this case, we obtain
	\begin{align*}
		\frac{\blambda(\Omega_k)}{\norm{\bar u}_{s,\Omega_k}}
		=
		(\alpha s+1)^{1/s}\,t_k^{1-\alpha-1/s},
	\end{align*}
	and this shows that $\bar u$ is an
	$s$-SD function if and only if $\alpha+1/s<1$ holds true.
	Particularly, $\alpha<1$ is necessary, and, in this case, $\bar u$ tends to $0$ quite
	slowly.
	Observe that $\alpha<1-1/s$ is equivalent to $\abs{\bar u}^{-1}\in L^r(\Omega)$.
	Recall that $r \in (1,\infty)$ is the conjugate coefficient associated with $s$.
\end{example}

In the remainder of the section, we aim to find a more tractable characterization of
$s$-SD functions. The above example motivates the subsequently stated lemma.

\begin{lemma}
	\label{lem:SD_by_hoelder}
	Let $s \in (1,\infty)$ be given.
	Furthermore, fix a function $\bar u \in L^s(\Omega)$ 
	with
	$\abs{\bar u}^{-1} \chi_{\set{\bar u \ne 0}} \in L^r(\Omega)$.
	Then $\bar u$ is an $s$-SD function.
\end{lemma}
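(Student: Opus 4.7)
The plan is to apply Hölder's inequality to split the constant function $1$ on $\Omega_k$ as the product $|\bar u|^{-1} \cdot |\bar u|$, which is valid because each $\Omega_k$ is contained in $\{\bar u \neq 0\}$. Concretely, since $1/r + 1/s = 1$, one writes
\begin{equation*}
    \blambda(\Omega_k)
    = \int_{\Omega_k} 1 \, \mathrm dx
    = \int_{\Omega_k} |\bar u(x)|^{-1} \cdot |\bar u(x)| \, \mathrm dx
    \leq \bigl\| |\bar u|^{-1} \bigr\|_{r,\Omega_k} \cdot \|\bar u\|_{s,\Omega_k},
\end{equation*}
which upon rearrangement yields
\begin{equation*}
    \frac{\blambda(\Omega_k)}{\|\bar u\|_{s,\Omega_k}} \leq \bigl\| |\bar u|^{-1} \bigr\|_{r,\Omega_k}.
\end{equation*}

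It then remains to show that the right-hand side tends to zero whenever $\blambda(\Omega_k) \searrow 0$. This is where the hypothesis $|\bar u|^{-1}\chi_{\{\bar u \neq 0\}} \in L^r(\Omega)$ enters: the function $|\bar u|^{-r}\chi_{\{\bar u \neq 0\}}$ belongs to $L^1(\Omega)$, so by absolute continuity of the Lebesgue integral, for every $\varepsilon > 0$ there is $\delta > 0$ such that $\int_A |\bar u(x)|^{-r}\,\mathrm dx < \varepsilon^r$ for every measurable $A \subset \{\bar u \neq 0\}$ with $\blambda(A) < \delta$. Applied to $A := \Omega_k$ for $k$ large, this gives $\||\bar u|^{-1}\|_{r,\Omega_k} \to 0$, and combining with the previous estimate completes the proof.

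I do not expect any serious obstacle here; the statement is essentially a packaging of Hölder plus absolute continuity of the integral. The only minor points to keep in mind are that $s \in (1,\infty)$ guarantees $r \in (1,\infty)$ so that Hölder is applicable with finite exponents, and that one should handle the trivial case $\|\bar u\|_{s,\Omega_k} = 0$ (which, by $\Omega_k \subset \{\bar u \neq 0\}$, forces $\blambda(\Omega_k) = 0$) separately or simply assume along the sequence that $\blambda(\Omega_k) > 0$ without loss of generality.
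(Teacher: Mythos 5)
Your proof is correct and follows essentially the same route as the paper: the identical H\"older split $\blambda(\Omega_k)=\int_{\Omega_k}\abs{\bar u}\,\abs{\bar u}^{-1}\,\dx\le\norm{\bar u}_{s,\Omega_k}\norm{\abs{\bar u}^{-1}}_{r,\Omega_k}$, with the convergence $\norm{\abs{\bar u}^{-1}}_{r,\Omega_k}\to 0$ justified by absolute continuity of the integral (which the paper leaves implicit). Note also that the paper's convention $\blambda(\Omega_k)\searrow 0$ already forces $\blambda(\Omega_k)>0$, so your trivial-case caveat is unnecessary but harmless.
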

\begin{proof}
	Pick an arbitrary sequence $\{\Omega_k\}_{k\in\N}$ of measurable
	subsets of $\{\bar u\neq 0\}$ satisfying $\blambda(\Omega_k)\searrow 0$.
	For each $k\in\N$, we find
	\begin{equation*}
		\blambda(\Omega_k)
		=
		\int_{\Omega_k} \abs{\bar u(x)} \abs{\bar u(x)}^{-1} \, \dx
		\le
		\norm{\bar u}_{s,\Omega_k} \norm{\abs{\bar u}^{-1}}_{r,\Omega_k}
	\end{equation*}
	by applying H\"older's inequality on $\Omega_k$.
	From
	$\abs{\bar u}^{-1} \chi_{\set{\bar u \ne 0}} \in L^r(\Omega)$
	we find the convergence 
	$\norm{\abs{\bar u}^{-1}}_{r,\Omega_k} \searrow 0$
	since $\blambda(\Omega_k) \searrow 0$.
	Hence, $\bar u$ is an $s$-SD function.
\end{proof}

Note that the requirements of \cref{lem:slowly_decreasing_functions} are
sufficient for the ones of \cref{lem:SD_by_hoelder}.
The next example shows the existence of $s$-SD functions $\bar u$
for which $\abs{\bar u}^{-1}\chi_{\{\bar u\neq 0\}} \not\in L^r(\Omega)$ holds.
\begin{example}
	\label{ex:SD_not_Lr}
	Again we consider $\Omega := (0,1)$.
	Let $s \in (1,\infty)$ be arbitrary.
	For each $k\in\N$, we define
	$t_k := 2^{-k}$.
	For some monotonically decreasing sequence
	$\seq{\gamma_k}_{k\in\N} \subset (0,\infty)$
	satisfying $\sum_{j=1}^\infty\gamma_j^s2^{-j}<\infty$,
	we consider the (monotonically increasing) function
	\begin{align*}
		\bar u &:= \sum_{j = 1}^\infty \gamma_j \chi_{(t_{j+1}, t_j]}
		\in L^s(\Omega)
		.
	\end{align*}
	Furthermore, we make use of $\Omega_k:=(0,t_k)$ for each $k\in\N$
	and note that $\blambda(\Omega_k)=t_k$ is valid.
	We obtain the estimates
	\begin{align*}
		\gamma_k^s t_k
		\ge
		\norm{\bar u}_{s,\Omega_k}^s
		&=
		\sum_{j = k}^\infty \gamma_j^s \, \parens{t_j - t_{j+1}}
		\ge
		\gamma_k^s \, \parens{t_k - t_{k+1}}
		=
		2^{-1} \gamma_k^s t_k
		,
		\\
		\gamma_k^{- 1} t_k^{1 - 1/s}
		\le
		\frac{\blambda(\Omega_k)}{\norm{\bar u}_{s,\Omega_k}}
		&
		\le
		2^{1/s} \gamma_k^{- 1} t_k^{1 - 1/s}.
	\end{align*}
	This shows that $\bar u$ is $s$-SD
	only if
	$\gamma_k^{-r} t_k \to 0$.
	Actually, this is already sufficient for the $s$-SD property. 
	Indeed, by monotonicity of $\bar u$, is suffices to consider sequences $\{\Omega_k'\}_{k\in\N}$
	of type $\Omega_k':=(0,t_k')$ for sequences $\{t_k'\}_{k\in\N}\subset(0,1)$ 
	satisfying $t_k'\searrow 0$.
	Then, for each $k\in\N$, we find $\ell_k\in\N$ such that $t_{\ell_k+1}\leq t_k'<t_{\ell_k}$
	leading to $\ell_k\to\infty$, $\blambda(\Omega_k')< t_{\ell_k}=2t_{\ell_k+1}$, and 
	\begin{align*}
		\norm{\bar u}_{s,\Omega_k'}^s
		=
		\sum\limits_{j=\ell_k+1}^\infty \gamma_j^s(t_j-t_{j+1})+\gamma_{\ell_k}^s(t_k'-t_{\ell_k+1})
		\geq
		2^{-1}\gamma_{\ell_k+1}^st_{\ell_k+1}.
	\end{align*}
	This yields
	\begin{align*}
		\frac{\blambda(\Omega_k')}{\norm{\bar u}_{s,\Omega_k'}}
		\leq
		\frac{2t_{\ell_k+1}}{2^{-1/s}\gamma_{\ell_k+1}t_{\ell_k+1}^{1/s}}
		=
		2^{1+1/s}\gamma_{\ell_k+1}^{-1}t_{\ell_k+1}^{1-1/s}
		\to
		0
	\end{align*}
	which shows validity of the $s$-SD property if $\gamma_k^{-r}t_k\to 0$ is valid.

	In particular,
	choosing
	$\gamma_k := (k 2^{-k})^{1/r}$
	for each $k\in\N$,
	the function $\bar u$ is $s$-SD,
	but
	\begin{equation*}
		\norm{ \abs{\bar u}^{-1} }_{r}^r
		=
		\sum_{j = 1}^\infty \gamma_j^{-r} \parens{t_j - t_{j+1}}
		=
		\sum_{j = 1}^\infty j^{-1} 2^j 2^{-j-1}
		=
		\infty,
	\end{equation*}
	i.e.,
	the sufficient condition from \cref{lem:SD_by_hoelder}
	does not hold.
\end{example}
In \cref{ex:SD_not_Lr},
the coupling between $t_k$ and $\gamma_k$
is
decisive
for the $s$-SD property.
This will be made more precise in \cref{thm:weak_sd_Hugo}.
For the proof of it, we need an auxiliary lemma,
which provides an intermediate value theorem for monotonic functions.
\begin{lemma}
	\label{lem:IVT}
	Let $g \colon [0,\infty) \to [0,\infty)$ be monotonically increasing
	and not identically $0$.
	Moreover, let $\alpha>0$ be fixed.
	Then, for each $C > 0$, there exists a unique $\gamma_C > 0$
	such that
	\begin{equation*}
		\lim_{\gamma \nearrow \gamma_C} g(\gamma)
		\le
		\frac{C}{\gamma_C^\alpha}
		\le
		\lim_{\gamma \searrow \gamma_C} g(\gamma)
		.
	\end{equation*}
	If $C \searrow 0$ we have $\gamma_C \searrow 0$.
\end{lemma}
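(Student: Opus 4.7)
The plan is to move everything to the auxiliary function $F\colon[0,\infty)\to[0,\infty)$ defined by $F(\gamma):=\gamma^\alpha g(\gamma)$. Since $\gamma\mapsto\gamma^\alpha$ is continuous at every point of $(0,\infty)$, the inequalities one has to produce are equivalent to
\[
	\lim_{\gamma\nearrow\gamma_C}F(\gamma)\;\leq\;C\;\leq\;\lim_{\gamma\searrow\gamma_C}F(\gamma),
\]
i.e., to solving ``$F(\gamma_C)=C$'' in the jump-tolerant sense that is standard for monotone $F$. First I would verify that $F$ is non-decreasing on $[0,\infty)$ as a product of two non-negative non-decreasing factors, that $F(\gamma)\to 0$ as $\gamma\searrow 0$ (since $g$ is bounded on any bounded interval while $\gamma^\alpha\to 0$), and that $F(\gamma)\to\infty$ as $\gamma\to\infty$; the blow-up is precisely where the hypothesis $g\not\equiv 0$ enters, as one picks $\gamma_\ast>0$ with $g(\gamma_\ast)>0$ and estimates $F(\gamma)\geq\gamma^\alpha g(\gamma_\ast)\to\infty$ for $\gamma\geq\gamma_\ast$.

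Then I would set
\[
	\gamma_C:=\inf\set{\gamma\geq 0\given F(\gamma)\geq C},
\]
which the previous boundary analysis places in $(0,\infty)$. By the defining property of the infimum combined with the monotonicity of $F$, one gets $F(\gamma)<C$ for every $\gamma<\gamma_C$ and $F(\gamma)\geq C$ for every $\gamma>\gamma_C$; passing to the left-hand and right-hand limits at $\gamma_C$ and dividing by $\gamma_C^\alpha$ yields the desired bounds for $g$. Uniqueness is a short monotonicity argument: if $\gamma_1<\gamma_2$ both satisfied the inequalities, the monotonicity of $g$ would give
\[
	\frac{C}{\gamma_1^\alpha}
	\leq
	\lim_{\gamma\searrow\gamma_1}g(\gamma)
	\leq
	\lim_{\gamma\nearrow\gamma_2}g(\gamma)
	\leq
	\frac{C}{\gamma_2^\alpha},
\]
forcing $\gamma_2\leq\gamma_1$, a contradiction.

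For the final assertion $\gamma_C\searrow 0$, observe that $C\mapsto\gamma_C$ is non-increasing because shrinking $C$ enlarges the defining set, so the limit $\gamma^\ast:=\lim_{C\searrow 0}\gamma_C$ exists in $[0,\infty)$. If $\gamma^\ast>0$, then for every $\gamma\in(0,\gamma^\ast)$ and every $C>0$ one has $\gamma<\gamma_C$, hence $F(\gamma)<C$; letting $C\searrow 0$ forces $F(\gamma)=0$ and therefore $g(\gamma)=0$ on $(0,\gamma^\ast)$, which contradicts the standing hypothesis that $g$ is not identically zero on a right neighbourhood of $0$. I expect this last step to be the main obstacle, since it is where one must pin down exactly how the hypothesis on $g$ rules out a plateau near the origin; the earlier parts are essentially bookkeeping with one-sided limits of a monotone function.
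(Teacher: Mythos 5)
Your construction of $\gamma_C$ is exactly the paper's: the paper also sets $\gamma_C=\inf\{\gamma\in(0,\infty)\,|\,g(\gamma)\ge C\gamma^{-\alpha}\}$, which is the same set as your $\{\gamma\,|\,F(\gamma)\ge C\}$, and reads off the two inequalities from the monotonicity of $\gamma\mapsto\gamma^\alpha g(\gamma)$ and uniqueness from the strict decrease of $\gamma\mapsto C\gamma^{-\alpha}$. Your verification that $F(\gamma)\to 0$ as $\gamma\searrow 0$ and $F(\gamma)\to\infty$ as $\gamma\to\infty$ (so that $\gamma_C\in(0,\infty)$), your passage to one-sided limits at the infimum, and your uniqueness argument are all correct and simply fill in the details the paper compresses into ``one can check''.

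The obstacle you flagged in the last step is a genuine gap, and you have located it precisely. Your argument correctly shows that $\gamma^\ast:=\lim_{C\searrow 0}\gamma_C>0$ forces $g\equiv 0$ on $(0,\gamma^\ast)$, but the hypothesis only excludes $g\equiv 0$ on all of $[0,\infty)$, not on a right neighbourhood of the origin, so no contradiction is available. In fact the final claim is false as stated: for $g=\chi_{(1,\infty)}$ and any $\alpha>0$, one checks that $\gamma_C=1$ is the unique admissible value for every $C\in(0,1]$, so $\gamma_C\not\to 0$. The paper's own one-line argument for this step (``if $C\le\varepsilon^\alpha g(\varepsilon)$ then $\gamma_C\le\varepsilon$'') suffers from the identical defect, since it is vacuous for every $\varepsilon$ with $g(\varepsilon)=0$. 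The claim becomes true under the additional assumption that $g(\gamma)>0$ for all $\gamma>0$ --- which is exactly what your contradiction implicitly uses, and what makes the paper's argument work. In the only application, the proof of \cref{thm:weak_sd_Hugo}, the excluded case ($g\equiv 0$ near $0$) means $\bar u$ is bounded away from zero on its support, a situation already covered by \cref{lem:slowly_decreasing_functions}, so the downstream results survive; but as a free-standing lemma the tail assertion needs the extra hypothesis, and your proof should state it rather than appeal to a strengthened version of ``not identically~$0$''.
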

\begin{proof}
	Observing that $\gamma\mapsto \gamma^\alpha g(\gamma)$ 
	is monotonically increasing on $[0,\infty)$,
	one can check that
	\begin{equation*}
		\gamma_C
		:=
		\inf\set{ \gamma \in (0,\infty) \given g(\gamma) \ge C \gamma^{-\alpha}}
		=
		\sup\set{ \gamma \in (0,\infty) \given g(\gamma) < C \gamma^{-\alpha}}
	\end{equation*}
	satisfies the desired inequalities.
	The uniqueness follows since
	$\gamma \mapsto C \gamma^{-\alpha}$
	is strictly monotonically decreasing.
	Moreover,
	if $\varepsilon > 0$ is arbitrary
	and
	$C \le \varepsilon^\alpha g(\varepsilon)$
	holds
	then
	$\varepsilon$ belongs to the set under the infimum
	and, therefore,
	$\gamma_C \le \varepsilon$ follows.
\end{proof}

\begin{theorem}
	\label{thm:weak_sd_Hugo}
	Let $s \in (1,\infty)$ be given.
	Then $\bar u \in L^s(\Omega)$
	is an $s$-SD function
	if and only if
	\begin{equation*}
		\lim\limits_{\gamma\searrow 0}
		\blambda\parens*{
			\set*{
				0 < \abs{\bar u} \le \gamma
			}
		}
		\gamma^{-r}
		=
		0
		.
	\end{equation*}
\end{theorem}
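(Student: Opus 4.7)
My plan is to prove the equivalence in the two directions separately. Throughout, set $\Omega_\gamma := \set{0 < \abs{\bar u} \leq \gamma}$ and $g(\gamma) := \blambda(\Omega_\gamma)$. By continuity of $\blambda$ on the decreasing family $\set{\Omega_\gamma}_{\gamma > 0}$, the function $g$ is monotonically increasing, right-continuous, and satisfies $\lim_{\gamma \searrow 0} g(\gamma) = 0$.

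For \emph{necessity}, suppose $\bar u$ is $s$-SD and take any $\gamma_n \searrow 0$. Either $g(\gamma_n) = 0$ for $n$ large (in which case $g(\gamma_n)/\gamma_n^r = 0$ trivially), or, passing to a subsequence, $g(\gamma_n) > 0$ for all $n$, so that $\set{\Omega_{\gamma_n}}_{n \in \N}$ is a legitimate test sequence. The pointwise bound $\abs{\bar u} \leq \gamma_n$ on $\Omega_{\gamma_n}$ yields $\norm{\bar u}_{s,\Omega_{\gamma_n}} \leq \gamma_n \blambda(\Omega_{\gamma_n})^{1/s}$ and hence
\begin{equation*}
	\frac{\blambda(\Omega_{\gamma_n})}{\norm{\bar u}_{s,\Omega_{\gamma_n}}}
	\geq
	\frac{\blambda(\Omega_{\gamma_n})^{1/r}}{\gamma_n}.
\end{equation*}
The $s$-SD property forces the left-hand side to tend to $0$, from which $g(\gamma_n)/\gamma_n^r \to 0$ follows.

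For \emph{sufficiency}, assume the limit condition and pick any sequence $\set{\Omega_k}_{k \in \N}$ of measurable subsets of $\set{\bar u \neq 0}$ with $\blambda(\Omega_k) \searrow 0$. The key lower bound comes from splitting $\Omega_k$ according to the level of $\abs{\bar u}$: for every $\gamma > 0$,
\begin{equation*}
	\norm{\bar u}_{s,\Omega_k}^s
	\geq
	\gamma^s\,\blambda(\Omega_k \cap \set{\abs{\bar u} > \gamma})
	\geq
	\gamma^s\,\parens*{\blambda(\Omega_k) - g(\gamma)},
\end{equation*}
where the second step uses $\Omega_k \subset \set{\bar u \neq 0}$. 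I would then choose $\gamma = \gamma_k$ in such a way that $g(\gamma_k) \leq \tfrac{1}{2}\blambda(\Omega_k)$ while $\gamma_k^r$ dominates $\blambda(\Omega_k)$. If $g$ vanishes on some right-neighbourhood of $0$, then $\bar u$ is bounded away from $0$ on its support and $s$-SD already follows from \cref{lem:slowly_decreasing_functions}. Otherwise $g$ is not identically $0$ on $(0,\infty)$ and I invoke \cref{lem:IVT} (equivalently, define $\gamma_k$ as the quantile $\sup\set{\gamma > 0 \given g(\gamma) \leq \tfrac{1}{2}\blambda(\Omega_k)}$); right-continuity and monotonicity of $g$ then yield $g(\gamma_k) \leq \tfrac{1}{2}\blambda(\Omega_k)$ and $g(\gamma) > \tfrac{1}{2}\blambda(\Omega_k)$ for every $\gamma > \gamma_k$, and a short contradiction argument (using $g > 0$ on $(0,\infty)$) shows $\gamma_k \searrow 0$. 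Combining the latter strict inequality in the form $\blambda(\Omega_k)/(2\gamma^r) < g(\gamma)/\gamma^r$ with the hypothesis $g(\gamma)/\gamma^r \to 0$ and letting $\gamma \searrow \gamma_k$ produces $\blambda(\Omega_k)/\gamma_k^r \to 0$. Substituting $\gamma = \gamma_k$ in the splitting estimate then gives $\norm{\bar u}_{s,\Omega_k} \geq 2^{-1/s}\gamma_k\,\blambda(\Omega_k)^{1/s}$ and hence
\begin{equation*}
	\frac{\blambda(\Omega_k)}{\norm{\bar u}_{s,\Omega_k}}
	\leq
	2^{1/s}\,\frac{\blambda(\Omega_k)^{1/r}}{\gamma_k}
	\to 0,
\end{equation*}
establishing $s$-SD.

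The main difficulty lies in the simultaneous control of two competing requirements on $\gamma_k$: we need $g(\gamma_k) \leq \tfrac{1}{2}\blambda(\Omega_k)$, so that at least half of the mass of $\Omega_k$ survives in $\set{\abs{\bar u} > \gamma_k}$, while at the same time $\gamma_k^r$ must dominate $\blambda(\Omega_k)$. A naive fixed-exponent choice $\gamma_k := \blambda(\Omega_k)^\alpha$ cannot reconcile the two, which is precisely why the quantile construction underpinning \cref{lem:IVT}, combined with the rapid decay hypothesis $g(\gamma) = o(\gamma^r)$, is indispensable.
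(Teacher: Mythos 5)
Your argument is correct in substance and follows the same strategy as the paper's proof: the necessity direction is identical (the pointwise bound $\abs{\bar u}\le\gamma_n$ on the sublevel sets gives $\norm{\bar u}_{s,\Omega_{\gamma_n}}\le\gamma_n\blambda(\Omega_{\gamma_n})^{1/s}$), and the sufficiency direction rests on the same two ingredients, namely a Chebyshev-type level-set splitting and an adaptively chosen threshold $\gamma_k$ obtained from the monotone intermediate value argument of \cref{lem:IVT}. The one genuine difference lies in the equilibration: the paper balances $\blambda(\set{0<\abs{\bar u}<\gamma})$ against $\norm{\bar u}_{s,\Omega_k}^s\gamma^{-s}$ and then needs an interpolation step (raising the two resulting bounds to the powers $r/(r+s)$ and $s/(r+s)$ and multiplying), whereas you balance $g(\gamma)$ against the constant $\tfrac12\blambda(\Omega_k)$, which lets you conclude directly once $\blambda(\Omega_k)\gamma_k^{-r}\to 0$ is established; this is a mild but real simplification of the endgame.

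One technical slip should be repaired. Since $g(\gamma)=\blambda(\set{0<\abs{\bar u}\le\gamma})$ is right- but in general not left-continuous, the quantile $\gamma_k=\sup\set{\gamma>0\given g(\gamma)\le\tfrac12\blambda(\Omega_k)}$ need \emph{not} satisfy $g(\gamma_k)\le\tfrac12\blambda(\Omega_k)$: from $g(\gamma)>\tfrac12\blambda(\Omega_k)$ for all $\gamma>\gamma_k$ and right-continuity one only gets $g(\gamma_k)\ge\tfrac12\blambda(\Omega_k)$, and if $g$ jumps across the level at $\gamma_k$ this inequality is strict, so your splitting estimate $\norm{\bar u}_{s,\Omega_k}^s\ge\gamma_k^s\parens{\blambda(\Omega_k)-g(\gamma_k)}$ degenerates. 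The fix is precisely the distinction built into the one-sided limits of \cref{lem:IVT}: by definition of the supremum one has $\blambda(\set{0<\abs{\bar u}<\gamma_k})=\lim_{\gamma\nearrow\gamma_k}g(\gamma)\le\tfrac12\blambda(\Omega_k)$, so estimating over the \emph{closed} superlevel set yields $\norm{\bar u}_{s,\Omega_k}^s\ge\gamma_k^s\,\blambda\parens{\Omega_k\cap\set{\abs{\bar u}\ge\gamma_k}}\ge\tfrac12\gamma_k^s\blambda(\Omega_k)$. The remainder of your argument, in particular the deduction $\blambda(\Omega_k)\gamma_k^{-r}\le 2g(\gamma_k)\gamma_k^{-r}\to 0$ (which correctly uses the right limit) and the contradiction argument for $\gamma_k\searrow 0$, goes through unchanged.
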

\begin{proof}
	``$\Longrightarrow$'':
	For an arbitrary sequence $\seq{\gamma_k}_{k\in\N} \subset (0,\infty)$ with $\gamma_k \searrow 0$,
	we set $\Omega_k := \set{ 0 < \abs{\bar u} \le \gamma_k}$ for each $k\in\N$.
	Exploiting $\bigcap_{k\in\N}\Omega_k=\varnothing$, 
	one can easily check that $\blambda(\Omega_k)\to 0$ is valid. 
	In case where $\{\blambda(\Omega_k)\}_{k\in\N}$ vanishes along the tail of the
	sequence, we have nothing to show.
	Thus, let us assume $\blambda(\Omega_k)\searrow 0$.
	Then we have
	\begin{equation*}	
		0
		\le 
		\lim\limits_{k\to\infty}
		\blambda(\Omega_k)^{1/r} \gamma_k^{-1}
		=
		\lim\limits_{k\to\infty}
		\blambda(\Omega_k)^{1 - 1/s} \gamma_k^{-1}
		\le 
		\lim\limits_{k\to\infty}
		\frac{\blambda(\Omega_k)}{\norm{\bar u}_{s,\Omega_k}}
		=
		0
	\end{equation*}
	by definition of an $s$-SD function.

	``$\Longleftarrow$'':
	Let $\{\Omega_k\}_{k\in\N}$ be a sequence of measurable subsets of $\set{\bar u \ne 0}$ 
	with $\blambda(\Omega_k) \searrow 0$.
	For an arbitrary $\gamma > 0$ and $k\in\N$,
	we have
	\begin{equation*}
		\Omega_k
		=
		\set{ x \in \Omega_k \given \abs{\bar u} \ge \gamma }
		\cup
		\set{ x \in \Omega_k \given \abs{\bar u} < \gamma }
		.
	\end{equation*}
	Using Chebyshev's inequality,
	we get
	\begin{equation}\label{eq:some_upper_estimate_s_SD}
		\blambda(\Omega_k)
		\le
		\frac{\norm{\bar u}_{s,\Omega_k}^s}{\gamma^s} + \blambda(\set{0 < \abs{\bar u} < \gamma})
		.
	\end{equation}
	In order to equilibrate the addends on the right-hand side,
	we apply \cref{lem:IVT}
	with $g(t) := \blambda(\set{0 < \abs{\bar u} < t})$
	and $\alpha:=s$
	in order to
	obtain $\gamma_k > 0$
	such that
	\begin{equation*}
		\blambda(\set{0 < \abs{\bar u } < \gamma_k })
		\le
		\frac{\norm{\bar u}_{s,\Omega_k}^s}{\gamma_k^{s}}
		\le
		\blambda(\set{0 < \abs{\bar u } \le \gamma_k })
		.
	\end{equation*}
	Due to $\norm{\bar u}_{s,\Omega_k} \searrow 0$,
	\cref{lem:IVT} guarantees $\gamma_k \searrow 0$.
	From \eqref{eq:some_upper_estimate_s_SD}, we infer
	\begin{equation*}
		\blambda(\Omega_k)
		\leq
		2\norm{\bar u}_{s,\Omega_k}^s \gamma_k^{-s}
		\qquad\text{and}\qquad
		\blambda(\Omega_k)
		\leq
		2\blambda(\set{0 < \abs{\bar u } \le \gamma_k }).
	\end{equation*}
	We raise these two inequalities to the powers $r / (r + s)$
	and $s / (r + s)$, respectively,
	and multiply them to obtain
	\begin{equation*}
		\blambda(\Omega_k)
		\le
		2
		\blambda(\set{0 < \abs{\bar u } \le \gamma_k })^{s/(r+s)}
		\gamma_k^{-r s / (r + s)}
		\norm{\bar u}_{s, \Omega_k}^{r s / (r + s)}
		.
	\end{equation*}
	Using $r s / (r + s) = 1$ and $\gamma_k \searrow 0$,
	we get
	\begin{equation*}
		\frac{\blambda(\Omega_k)}{\norm{\bar u}_{s,\Omega_k}}
		\le
		2
		\parens*{
			\blambda(\set{0 < \abs{\bar u } \le \gamma_k })
			\gamma_k^{-r}
		}^{s/(r+s)}
		\to
		0
	\end{equation*}
	which completes the proof.
\end{proof}

Note that the condition from \cref{thm:weak_sd_Hugo} is a little bit stronger than
$\abs{\bar u}^{-1}\chi_{\{\bar u\neq 0\}} \in L^{r,\infty}(\Omega)$,
where $L^{r,\infty}(\Omega)$
is a weak Lebesgue space (or Lorentz space),
which would require that
$\blambda\parens*{
	\set*{
		0 < \abs{\bar u} \le \gamma
	}
}
\gamma^{-r}$
is bounded with respect to $\gamma \in (0,\infty)$.
Moreover,
Chebyshev's inequality can be used to
see that $\abs{\bar u}^{-1}\chi_{\{\bar u\neq 0\}} \in L^r(\Omega)$
implies the condition from \cref{thm:weak_sd_Hugo}.
Indeed,
\begin{equation*}
	\blambda\parens*{
		\set*{
			0 < \abs{\bar u} \le \gamma
		}
	}
	\gamma^{-r}
	\le
	\norm{ \abs{\bar u}^{-1} }_{r, \set{\abs{\bar u}^{-1} \ge \gamma^{-1}}}^r
	\to
	0
	\qquad\text{as }\gamma \searrow 0
\end{equation*}
holds if $\abs{\bar u}^{-1}\chi_{\{\bar u\neq 0\}} \in L^r(\Omega)$ is valid.

\section{The case $p=0$}\label{sec:p=0}

We start our analysis by investigating the variational properties of the 
discontinuous functional $q_{s,0}$. 

\subsection{Fr\'{e}chet subdifferential}

The aim of this subsection is to provide a full characterization of the Fr\'{e}chet subdifferential
associated with the functional $q_{s,0}$ for each $s\in[1,\infty)$.
We start our investigations by providing a simple upper bound of the Fr\'{e}chet subdifferential
of $q_{s,0}$.
\begin{lemma}\label{lem:trivial_upper_bound_Frechet_qs0}
	For given $s\in[1,\infty)$ and $\bar u\in L^s(\Omega)$, we have
	\[
		\widehat{\partial}q_{s,0}(\bar u)
		\subset
		\{\eta\in L^r(\Omega)\,|\,\{\eta\neq 0\}\subset \{\bar u= 0\}\}.
	\]
\end{lemma}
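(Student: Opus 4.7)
I would argue by contradiction: pick $\eta\in\fsub q_{s,0}(\bar u)$ and assume the measurable set
\[
	A_0 := \{\eta\neq 0\}\cap\{\bar u\neq 0\}
\]
has positive measure, and then construct a sequence of perturbations $h_k$ with $\norm{h_k}_s\searrow 0$ along which the defining Fr\'{e}chet difference quotient stays bounded away from $0$ on the negative side.

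The key idea is to use perturbations that are supported in $\{\bar u\neq 0\}$ and small enough not to cancel $\bar u$ pointwise, so that $\bar u+h_k$ has the same support as $\bar u$ and hence $q_{s,0}(\bar u+h_k)=q_{s,0}(\bar u)$. Concretely, by decomposing $A_0$ into the pieces where $\eta$ and $\bar u$ have fixed signs, and using $\sigma$-additivity of the measure, I would extract a measurable subset $A\subset A_0$ with $\blambda(A)>0$ on which $\eta$ has a definite sign, $\bar u$ has a definite sign, $\abs{\eta}\geq c$, and $\abs{\bar u}\geq\delta$ for some constants $c,\delta>0$. Setting $h_k:= t_k\,\sgn(\eta)\,\chi_A$ for a sequence $t_k\searrow 0$ with $t_k<\delta$, one verifies $\abs{\bar u+h_k}\geq \delta-t_k>0$ on $A$, so $\{\bar u+h_k\neq 0\}=\{\bar u\neq 0\}$ (up to a nullset) and therefore $q_{s,0}(\bar u+h_k)-q_{s,0}(\bar u)=0$. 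Moreover
\[
	\int_\Omega \eta(x)\,h_k(x)\,\dx \;=\; t_k\int_A \abs{\eta(x)}\,\dx \;\geq\; c\,t_k\,\blambda(A),
	\qquad
	\norm{h_k}_s \;=\; t_k\,\blambda(A)^{1/s}.
\]
The Fr\'{e}chet difference quotient then equals
\[
	\frac{-\int_\Omega \eta h_k\,\dx}{\norm{h_k}_s}
	\;\leq\;
	-\,\frac{\int_A \abs{\eta}\,\dx}{\blambda(A)^{1/s}},
\]
which is a \emph{negative constant} independent of $k$, contradicting $\eta\in\fsub q_{s,0}(\bar u)$.

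The only subtle point is ensuring $\bar u+h_k$ does not vanish anywhere on $A$; this is why the preliminary reduction to a subset on which both $\abs{\bar u}$ and $\abs{\eta}$ are bounded away from zero (and both have constant signs) is carried out. This reduction is routine, relying on writing $A_0$ as the countable union of the level sets $\{\abs{\eta}\geq 1/n\}\cap\{\abs{\bar u}\geq 1/n\}$ intersected with the four sign quadrants, and choosing one of these pieces with positive measure. No finer information about the structure of $\bar u$ is needed, and in particular the argument works uniformly in $s\in[1,\infty)$.
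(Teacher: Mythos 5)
Your proof is correct and follows essentially the same strategy as the paper's: both choose perturbations supported in $\{\bar u\neq 0\}\cap\{\eta\neq 0\}$ that are too small to destroy the support of $\bar u$ (so the $q_{s,0}$-difference vanishes) and aligned with $\sgn\eta$ (so the linear term is negative of exact order $\norm{h_k}_s$). The only cosmetic difference is that the paper takes $h_k=\tfrac{1}{2k}\abs{\bar u}\chi_{\Omega'}\sgn\eta$, which makes the pointwise bound $\abs{h_k}<\abs{\bar u}$ automatic and spares the extra truncation to a set where $\abs{\bar u}\geq\delta$ that your constant-height perturbation requires.
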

\begin{proof}
	Let $\eta \in L^r(\Omega)$
	be given such that
	there exists a measurable set $\Omega'\subset\{\bar u\neq 0\}$ of non-zero
	measure where $\eta$ is non-vanishing.
	We assume w.l.o.g.\ that $\abs{\eta(x)}\geq\rho$ holds for some $\rho>0$
	and almost all $x\in\Omega'$.
	Define a sequences
	$\{h_k\}_{k\in\N}\subset L^s(\Omega)$ by means of
	$h_k:=\tfrac1{2k}\abs{\bar u}\chi_{\Omega'}\sgn\eta$ for each $k\in\N$.
	Clearly, we have $\norm{h_k}_s\searrow 0$.
	Furthermore, we find
	\begin{align*}
		&
		\frac{q_{s,0}(\bar u+h_k)-q_{s,0}(\bar u)-\int_{\Omega}\eta(x)h_k(x)\,\mathrm dx}
		{\norm{h_k}_s}
		=
		\frac{
			-\tfrac1{2k}\int_{\Omega'}|\eta(x)||\bar u(x)|\,\mathrm dx
		}{	
			\tfrac1{2k}\,\norm{\bar u}_{s,\Omega'}
		}
		\leq
		-\rho\,
		\frac{
		 	\norm{\bar u}_{1,\Omega'}
		 }{
		 	\norm{\bar u}_{s,\Omega'}
		 }
		<
		0.
	\end{align*}
	Hence, $\eta\notin\widehat\partial q_{s,0}(\bar u)$
	and this finishes the proof.
\end{proof}

In the subsequently stated result, we characterize all points in $L^s(\Omega)$ where the Fr\'{e}chet
subdifferential of $q_{s,0}$ is nonempty.
Therefore, the concept of slowly decreasing functions discussed in \cref{sec:s_SD_functions}
turns out to be essential.
\begin{lemma}\label{lem:nonemptiness_Frechet_qs0}
	Fix $s\in[1,\infty)$ and $\bar u\in L^s(\Omega)$.
	Then $\widehat\partial q_{s,0}(\bar u)$ is nonempty if and only if one of the
	following conditions is valid:
	\begin{enumerate}
		\item \label{item:nonemptiness_Frechet_qs0_trivial}$\bar u=0$ holds almost everywhere
			on $\Omega$,
		\item \label{item:nonemptiness_Frechet_qs0} 
			it holds $s>1$ and 
			$\bar u$ is an $s$-SD function.
	\end{enumerate}
\end{lemma}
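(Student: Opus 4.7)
The plan is to take $\eta = 0$ as the candidate Fréchet subgradient and to probe it with the support-killing perturbations $h = -\bar u \chi_{\Omega'}$ for measurable $\Omega' \subset A := \set{\bar u \ne 0}$. A direct set-theoretic analysis of $\set{\bar u + h \ne 0}$ yields the identity
\begin{equation*}
	q_{s,0}(\bar u + h) - q_{s,0}(\bar u)
	=
	-\blambda(A \cap \set{h = -\bar u}) + \blambda(A^c \cap \set{h \ne 0})
	\ge
	-\blambda(\Omega_h)
\end{equation*}
for every $h \in L^s(\Omega)$, where $\Omega_h := A \cap \set{h = -\bar u}$. Since $\abs{h} = \abs{\bar u}$ pointwise on $\Omega_h$, one also has $\norm{h}_s \ge \norm{\bar u}_{s,\Omega_h}$. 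These two inequalities drive both directions.

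For the sufficiency, case (a) is immediate: $q_{s,0}(\bar u + h) \ge 0 = q_{s,0}(\bar u)$, so $0 \in \fsub q_{s,0}(\bar u)$. Under case (b), I would take an arbitrary sequence $\seq{h_k}_{k\in\N}$ with $\norm{h_k}_s \searrow 0$, set $\Omega_k := \Omega_{h_k}$, and use the two bounds to reduce matters to showing $\blambda(\Omega_k)/\norm{\bar u}_{s,\Omega_k} \to 0$ along those indices for which $\blambda(\Omega_k) > 0$. The $s$-SD property delivers exactly this conclusion \emph{provided} that $\blambda(\Omega_k) \to 0$, which I would verify by splitting $\Omega_k$ along $\set{\abs{\bar u} \ge \varepsilon}$ and $\set{0 < \abs{\bar u} < \varepsilon}$: the first piece is controlled by $\norm{\bar u}_{s,\Omega_k}^s / \varepsilon^s$ via Chebyshev (using $\norm{\bar u}_{s,\Omega_k} \le \norm{h_k}_s \to 0$), while the second is controlled by $\blambda(\set{0 < \abs{\bar u} < \varepsilon})$, which vanishes as $\varepsilon \searrow 0$ by continuity of measure.

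For the necessity, I assume $\fsub q_{s,0}(\bar u) \ne \varnothing$ and that (a) fails, so $\blambda(A) > 0$. Fix any $\eta \in \fsub q_{s,0}(\bar u)$; by \cref{lem:trivial_upper_bound_Frechet_qs0} the function $\eta$ vanishes on $A$. Testing with $h_k := -\bar u \chi_{\Omega_k}$ for an arbitrary $\Omega_k \subset A$ with $\blambda(\Omega_k) \searrow 0$ gives $\int_\Omega \eta(x) h_k(x)\, \dx = 0$, $\norm{h_k}_s = \norm{\bar u}_{s,\Omega_k}$, and $q_{s,0}(\bar u + h_k) - q_{s,0}(\bar u) = -\blambda(\Omega_k)$; the defining inequality of $\fsub q_{s,0}(\bar u)$ then forces $\blambda(\Omega_k)/\norm{\bar u}_{s,\Omega_k} \to 0$ along every such sequence. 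For $s > 1$, this is precisely the $s$-SD property, i.e., case (b). For $s = 1$, I would derive a contradiction by localising $\Omega_k$ inside a fixed set $A \cap \set{\abs{\bar u} \le M}$ of positive measure, which exists for some $M$ by $\sigma$-additivity; then $\norm{\bar u}_{1,\Omega_k} \le M \blambda(\Omega_k)$ yields $\blambda(\Omega_k)/\norm{\bar u}_{1,\Omega_k} \ge 1/M > 0$, contradicting the vanishing just established.

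The main obstacle is the bridge, inside the sufficiency proof of (b), from $\norm{\bar u}_{s,\Omega_k} \to 0$ (which the perturbation argument naturally supplies) to $\blambda(\Omega_k) \to 0$ (which \cref{def:slowly_decrasing_functions} takes as its hypothesis); the Chebyshev-plus-continuity-of-measure step sketched above is the required bridge, and it is incidentally the reason why the necessity argument for $s = 1$ must take the distinct route via essentially bounded subsets.
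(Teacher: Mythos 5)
Your proof is correct and follows essentially the same route as the paper's: the same test perturbations $h=-\bar u\chi_{\Omega'}$ built from the support of $\bar u$, the same reduction via \cref{lem:trivial_upper_bound_Frechet_qs0} to a subgradient vanishing on $\set{\bar u\neq 0}$, and the same localisation inside a set $\set{0<\abs{\bar u}\le M}$ of positive measure to rule out $s=1$. The only substantive difference is that you make explicit, via Chebyshev's inequality and continuity of measure, the bridge from $\norm{\bar u}_{s,\Omega_k}\to 0$ to $\blambda(\Omega_k)\to 0$, a step the paper asserts without comment; this is a refinement of the same argument rather than a different one.
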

\begin{proof}
	We start the proof by showing that $0\in\widehat\partial q_{s,0}(\bar u)$ holds in the
	presence of each of the given conditions, i.e., we need to show that for all sequences
	$\{h_k\}_{k\in\N}\subset L^s(\Omega)$ with $\norm{h_k}_s\searrow 0$, we have
	\[
		\liminf\limits_{k\to\infty}
		\frac{q_{s,0}(\bar u+h_k)-q_{s,0}(\bar u)}{\norm{h_k}_s}
		\geq 0.
	\]
	This obviously holds true whenever $q_{s,0}(\bar u)=0$ holds, i.e., if $\{\bar u\neq 0\}$ is of
	measure zero which is the case in \ref{item:nonemptiness_Frechet_qs0_trivial}.
	Thus, let us assume that~\ref{item:nonemptiness_Frechet_qs0} holds.
	For each $k\in\N$, we define $\Omega_k:=\{\bar u\neq 0\}\cap\{\bar u+h_k=0\}$
	and obtain
	\begin{equation}\label{eq:some_estimate_nonemptiness_Frechet_qs0}
		\frac{q_{s,0}(\bar u+h_k)-q_{s,0}(\bar u)}{\norm{h_k}_s}
		\geq
		\frac{\int_{\{\bar u\neq 0\}}\bigl(|\bar u(x)+h_k(x)|_0-1\bigr)\,\mathrm dx}{\norm{h_k}_s}
		=
		-\frac{\blambda(\Omega_k)}{\norm{h_k}_s}.
	\end{equation}
	By $\norm{h_k}_s \searrow 0$, we get
	$\blambda(\Omega_k) \to 0$.
	In the case where $\blambda(\Omega_k) = 0$ holds along the tail of the sequence, we get 
	$\blambda(\Omega_k)/\norm{h_k}_s\to 0$.
	Otherwise, we may assume w.l.o.g.\ $\blambda(\Omega_k) > 0$ for all $k\in\N$.
	Thus,
	we have $\norm{h_k}_{s,\Omega_k} = \norm{\bar u}_{s,\Omega_k} > 0$ for all $k\in\N$
	and, consequently,
	\begin{equation*}
		-\frac{\blambda(\Omega_k)}{\norm{h_k}_{s}}
		\ge
		-\frac{\blambda(\Omega_k)}{\norm{h_k}_{s,\Omega_k}}
		=
		-\frac{\blambda(\Omega_k)}{\norm{\bar u}_{s,\Omega_k}}
		.
	\end{equation*}
	The latter term, however, tends to zero since $\bar u$ is an $s$-SD function.
	Thus, taking the limit inferior in \eqref{eq:some_estimate_nonemptiness_Frechet_qs0}
	yields $0\in\fsub q_{s,0}(\bar u)$ in the presence of ~\ref{item:nonemptiness_Frechet_qs0}.
	
	In order to show the converse statement, we assume that there exists some 
	$\eta\in\widehat{\partial}q_{s,0}(\bar u)$. \Cref{lem:trivial_upper_bound_Frechet_qs0}
	shows $\{\eta\neq 0\}\subset \{\bar u= 0\}$. 
	Suppose that $\bar u$ is not identically zero almost everywhere on $\Omega$.
	
	For $s=1$, choose $\rho>0$ such that $\Omega':=\{0<\abs{\bar u}\leq\rho\}$ is of positive measure.
	Next, we pick a sequence $\{\Omega_k'\}_{k\in\N}$ of measurable subsets of $\Omega'$
	which satisfy $\blambda(\Omega_k')\searrow 0$. 
	For each $k\in\N$, we set $h_k:=-\bar u\chi_{\Omega_k'}$.
	By construction, we have $\norm{h_k}_1\searrow 0$. Furthermore, we find
	\begin{align*}
		&
		\frac{q_{1,0}(\bar u+h_k)-q_{1,0}(\bar u)-\int_\Omega\eta(x)h_k(x)\,\mathrm dx}
			{\norm{h_k}_1}
		=
		-\frac{\blambda(\Omega_k')}{\int_{\Omega_k'}|\bar u(x)|\,\mathrm dx}
		\leq
		-\frac{\blambda(\Omega_k')}{\rho\,\blambda(\Omega_k')}
		=
		-
		\frac{1}{\rho},
	\end{align*}
	contradicting $\eta\in\widehat{\partial}q_{1,0}(\bar u)$.
	Consequently, $s>1$ holds.
	
	Finally, suppose that $\bar u$ is not an $s$-SD function.
	Then there is a sequence $\{\Omega_k\}_{k\in\N}$ of measurable subsets of
	$\{\bar u\neq 0\}$ such that $\blambda(\Omega_k)\searrow 0$ while the quotients
	$\blambda(\Omega_k)/\norm{\bar u}_{s,\Omega_k}$ do not converge
	to zero.
	For simplicity, we assume that there is $\beta>0$ such that 
	$\blambda(\Omega_k)/\norm{\bar u}_{s,\Omega_k}\geq\beta$
	holds for all $k\in\N$ (otherwise, consider a suitable subsequence).
	Once more, we make use of the sequence $\{h_k\}_{k\in\N}$ given by $h_k:=-\bar u\chi_{\Omega_k}$
	for each $k\in\N$. As above, we exploit $\{\eta\neq 0\}\subset \{\bar u= 0\}$ and
	$\{h_k\neq 0\}\subset\{\bar u\neq 0\}$ in order to find
	\begin{align*}
		\frac{q_{s,0}(\bar u+h_k)-q_{s,0}(\bar u)-\int_\Omega\eta(x)h_k(x)\,\mathrm dx}{\norm{h_k}_s}
		=
		-\frac{\blambda(\Omega_k)}{\norm{\bar u}_{s,\Omega_k}}
		\leq
		-\beta,
	\end{align*}
	yielding a contradiction to $\eta\in\widehat{\partial}q_{s,0}(\bar u)$ 
	since $\norm{h_k}_s\searrow 0$.
\end{proof}

Now, we are in position to fully characterize the Fr\'{e}chet subdifferential of $q_{s,0}$.
First, we investigate the case $s=1$ which needs to be treated separately.
\begin{theorem}\label{thm:characterization_Frechet_q10}
	We have 
	\[
		\forall\bar u\in L^1(\Omega)\colon\quad
		\widehat{\partial}q_{1,0}(\bar u)
		=
		\begin{cases}
			\{0\}		&	\text{if $\bar u=0$ a.e.\ on $\Omega$,}\\
			\varnothing	&	\text{otherwise.}
		\end{cases}
	\]
\end{theorem}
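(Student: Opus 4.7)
The plan is to split on whether $\bar u$ vanishes a.e.\ or not, and use \cref{lem:nonemptiness_Frechet_qs0} to dispatch the non-vanishing case quickly. Since $s=1$ rules out alternative \ref{item:nonemptiness_Frechet_qs0} of that lemma, a nonempty Fr\'echet subdifferential forces $\bar u = 0$ a.e.\ on $\Omega$. Hence, if $\blambda(\set{\bar u \ne 0}) > 0$, we immediately obtain $\widehat\partial q_{1,0}(\bar u) = \varnothing$, which is the ``otherwise'' branch of the claim.

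For the remaining case $\bar u = 0$ a.e., I observe that $q_{1,0}(\bar u) = 0$ and $q_{1,0}(h) \ge 0$ for every $h \in L^1(\Omega)$, so $\eta = 0$ satisfies the liminf inequality trivially, giving $0 \in \widehat\partial q_{1,0}(\bar u)$. It then remains to exclude every nonzero $\eta \in L^\infty(\Omega)$ from $\widehat\partial q_{1,0}(\bar u)$.

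To rule out such $\eta$, I would pick $\rho > 0$ small enough that $A := \set{\abs{\eta} \ge \rho}$ has positive measure and exploit non-atomicity of $\blambda$ to select measurable subsets $B_k \subset A$ with $\blambda(B_k) = k^{-2}$ for all sufficiently large $k \in \N$. Setting $h_k := k\,\chi_{B_k} \sgn \eta$ yields $\norm{h_k}_1 = 1/k \searrow 0$, $q_{1,0}(h_k) = \blambda(B_k)$, and $\int_\Omega \eta(x) h_k(x)\,\dx \ge k\rho\,\blambda(B_k)$. The difference quotient is thus bounded above by
\[
	\frac{\blambda(B_k) - k\rho\,\blambda(B_k)}{k\,\blambda(B_k)}
	=
	\frac{1}{k} - \rho
	\;\longrightarrow\;
	-\rho < 0,
\]
contradicting $\eta \in \widehat\partial q_{1,0}(\bar u)$.

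The main obstacle is calibrating the amplitude and support of $h_k$ so that $\norm{h_k}_1 \to 0$ while the pairing $\int \eta h_k$ eventually dominates $q_{1,0}(h_k)$. The scaling $h_k = k\,\chi_{B_k}\sgn \eta$ with $\blambda(B_k) = k^{-2}$ does exactly this: the pairing grows linearly in $k$, while $q_{1,0}(h_k)$ depends only on the support size $k^{-2}$, and $\norm{h_k}_1 = 1/k$ still vanishes. Morally, this reflects the discontinuity of $q_{1,0}$ at $0$, which prevents any nonzero linear functional from serving as a Fr\'echet subgradient.
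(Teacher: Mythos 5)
Your proposal is correct and follows essentially the same route as the paper: the non-vanishing case is dispatched by \cref{lem:nonemptiness_Frechet_qs0}, the inclusion $0\in\widehat\partial q_{1,0}(\bar u)$ is immediate from $q_{1,0}\ge 0$, and nonzero $\eta$ are excluded by testing with characteristic functions supported on a set where $\abs{\eta}\ge\rho$. The only cosmetic difference is that you let the amplitude grow like $k$ while shrinking the support like $k^{-2}$, whereas the paper fixes the amplitude at $2/\rho$ on sets of measure tending to zero; both choices produce the same strictly negative limit of the difference quotient.
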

\begin{proof}
	Due to \cref{lem:nonemptiness_Frechet_qs0}, we already know that
	$\widehat{\partial}q_{1,0}(\bar u)$ is empty for each $\bar u\in L^1(\Omega)\setminus\{0\}$.
	Thus, assume that $\bar u$ vanishes almost everywhere on $\Omega$. 
	In the proof of \cref{lem:nonemptiness_Frechet_qs0}, we verified
	$0\in\widehat{\partial}q_{1,0}(\bar u)$. Consequently, we only need to show the converse inclusion.
	Thus, fix $\eta\in\widehat{\partial}q_{1,0}(\bar u)$ and assume that $\eta$ is not identically
	zero almost everywhere on $\Omega$. 
	Then we find a measurable set $\Omega'\subset\Omega$ of positive measure as well as
	some $\rho>0$ such that $|\eta(x)|\geq\rho$ holds for almost all $x\in\Omega'$.
	Consider a sequence $\{\Omega_k\}_{k\in\N}$ of measurable subsets of $\Omega'$ which satisfy
	$\blambda(\Omega_k)\searrow 0$. For each $k\in\N$, we define 
	$h_k:=\tfrac{2}{\rho}\chi_{\Omega_k}\sgn\eta$. 
	Clearly, $\norm{h_k}_1\searrow 0$ holds.
	Furthermore, we find
	\begin{align*}
		\frac{q_{1,0}(h_k)-\int_\Omega\eta(x)h_k(x)\,\mathrm dx}{\norm{h_k}_1}
		=
		\frac{\blambda(\Omega_k)-\tfrac{2}{\rho}\int_{\Omega_k}|\eta(x)|\,\mathrm dx}
			{\tfrac{2}{\rho}\blambda(\Omega_k)}
		\leq
		-\frac{\blambda(\Omega_k)}{\frac{2}{\rho}\blambda(\Omega_k)}=-\frac{\rho}{2}<0,
	\end{align*}
	contradicting $\eta\in\widehat{\partial}q_{1,0}(\bar u)$.
\end{proof}

\begin{remark}\label{rem:Frechet_q10_vs_PMP}
	Let us consider the unconstrained minimization of the function $f+q_{1,0}$ on $L^1(\Omega)$
	where $f\colon L^1(\Omega)\to\R$ is Fr\'{e}chet differentiable.
	Exploiting the sum rule from \cite[Proposition~1.107]{Mordukhovich2006} and 
	Fermat's rule from \cite[Proposition~1.114]{Mordukhovich2006}, a necessary condition
	for $\bar u\in L^1(\Omega)$ to be a local minimizer of $f+q_{1,0}$ is
	$-f'(\bar u)\in\widehat{\partial}q_{1,0}(\bar u)$.
	Due to \cref{thm:characterization_Frechet_q10}, this amounts to
	$\bar u= 0$ and $f'(\bar u)=0$ almost everywhere on $\Omega$.
	A similar result can be obtained when applying Pontryagin's maximum principle to
	the problem of interest, see
	\cite[Theorem~2.2]{ItoKunisch2014} or \cite[Section~2.1]{NatemeyerWachsmuth2020}.
\end{remark}

Next, we characterize the Fr\'{e}chet subdifferential of $q_{s,0}$ for $s\in(1,\infty)$.

\begin{theorem}\label{thm:characterization_Frechet_qs0}
	Fix $s \in (1,\infty)$.
	Then we have
	\begin{equation*}
	\forall \bar u\in L^s(\Omega)\colon\quad
	\fsub q_{s,0}(\bar u)
	=
	\begin{cases}
		\set{
			\eta \in L^r(\Omega)
			\given
			\{\eta\neq 0\} \subset \{\bar u= 0\}
		}
		&
		\text{if $\bar u$ is $s$-SD,}
		\\
		\varnothing & \text{otherwise}.
	\end{cases}
\end{equation*}
\end{theorem}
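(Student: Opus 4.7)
The plan is to reduce the statement to results already established in the paper. The ``otherwise'' branch is immediate: every function vanishing a.e.\ on $\Omega$ is trivially $s$-SD (as noted after \cref{def:slowly_decrasing_functions}), so if $\bar u$ fails to be $s$-SD, then $\bar u$ is nonzero on a set of positive measure, and \cref{lem:nonemptiness_Frechet_qs0} directly yields $\fsub q_{s,0}(\bar u) = \varnothing$. When $\bar u$ is $s$-SD, the inclusion ``$\subset$'' is precisely the content of \cref{lem:trivial_upper_bound_Frechet_qs0}, so the only actual work lies in the reverse inclusion.

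To prove ``$\supset$'', I would fix $\eta \in L^r(\Omega)$ with $\{\eta \neq 0\} \subset \{\bar u = 0\}$ and an arbitrary sequence $\{h_k\}_{k\in\N} \subset L^s(\Omega)$ with $\norm{h_k}_s \searrow 0$. Setting
\[
    A_k := \{\bar u \neq 0\} \cap \{\bar u + h_k = 0\},
    \qquad
    B_k := \{\bar u = 0\} \cap \{h_k \neq 0\},
\]
and partitioning $\Omega$ according to whether $\bar u$ vanishes, a short bookkeeping yields
\[
    q_{s,0}(\bar u + h_k) - q_{s,0}(\bar u) = \blambda(B_k) - \blambda(A_k),
\]
while $\int_\Omega \eta(x) h_k(x) \,\dx = \int_{B_k} \eta(x) h_k(x) \,\dx$ since $\eta$ is supported in $\{\bar u = 0\}$. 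The claim therefore reduces to showing that the liminf of $(\blambda(B_k) - \blambda(A_k) - \int_{B_k} \eta h_k \,\dx)/\norm{h_k}_s$ is nonnegative.

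My strategy would be to pass to a subsequence along which this quotient approaches its liminf and then extract a further subsequence on which $h_k \to 0$ pointwise a.e.\ on $\Omega$. The nonnegative term $\blambda(B_k)/\norm{h_k}_s$ only helps. For the ``loss'' term, $h_k = -\bar u$ on $A_k$ gives $\norm{h_k}_s \geq \norm{\bar u}_{s, A_k}$; moreover, pointwise convergence $h_k \to 0$ on $\{\bar u \neq 0\}$ (where $\bar u \neq 0$) forces $\chi_{A_k} \to 0$ a.e., whence $\blambda(A_k) \to 0$ by dominated convergence on the finite-measure set $\{\bar u \neq 0\}$. Splitting further into the indices with $\blambda(A_k) = 0$ (quotient trivially zero) and those with $\blambda(A_k) > 0$ (where the $s$-SD definition now applies and gives $\blambda(A_k)/\norm{\bar u}_{s, A_k} \to 0$) yields $\blambda(A_k)/\norm{h_k}_s \to 0$. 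For the bilinear term, $\chi_{B_k} \to 0$ a.e.\ together with the dominant $|\eta|^r \in L^1(\Omega)$ yields $\norm{\eta \chi_{B_k}}_r \to 0$ by dominated convergence, and Hölder's inequality on $B_k$ then produces $|\int_{B_k} \eta h_k \,\dx|/\norm{h_k}_s \leq \norm{\eta \chi_{B_k}}_r \to 0$. Combining, the quotient on the subsubsequence tends to a nonnegative limit, and since this subsubsequence realises the original liminf, the desired inequality follows.

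The point requiring care is the coordination of the various subsequence extractions (pointwise a.e.\ convergence of $h_k$, and the need for $\blambda(A_k) > 0$ before the $s$-SD definition can be invoked), together with the correct geometric decomposition $\blambda(B_k) - \blambda(A_k)$ of the increment in $q_{s,0}$. Once that decomposition is in place, the loss term is absorbed by the $s$-SD hypothesis and the bilinear term by a routine Hölder--dominated-convergence estimate, with no further obstacle.
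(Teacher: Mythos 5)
Your reduction of the ``otherwise'' branch and of the inclusion ``$\subset$'' to \cref{lem:nonemptiness_Frechet_qs0,lem:trivial_upper_bound_Frechet_qs0} is correct, your decomposition $q_{s,0}(\bar u+h_k)-q_{s,0}(\bar u)=\blambda(B_k)-\blambda(A_k)$ is correct, and your treatment of the loss term $\blambda(A_k)/\norm{h_k}_s$ (pointwise a.e.\ convergence along a subsequence forces $\chi_{A_k}\to 0$ a.e.\ because $h_k=-\bar u\neq 0$ on $A_k$, then the $s$-SD property applies after discarding indices with $\blambda(A_k)=0$) matches the paper's argument inside \cref{lem:nonemptiness_Frechet_qs0}. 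The gap is in the bilinear term. You claim that $h_k\to 0$ pointwise a.e.\ implies $\chi_{B_k}\to 0$ a.e.; this is false, because on $B_k=\{\bar u=0\}\cap\{h_k\neq 0\}$ the function $h_k$ is merely nonzero, not bounded away from zero, so a point can lie in every $B_k$ while $h_k$ still tends to zero there. Concretely, for $\bar u=0$ a.e., $\eta=\chi_\Omega$, and $h_k=\tfrac1k\chi_\Omega$, one has $B_k=\Omega$ for all $k$ and $\bigl|\int_{B_k}\eta(x)h_k(x)\,\dx\bigr|/\norm{h_k}_s=\blambda(\Omega)^{1/r}$, a fixed positive constant; your claimed convergence $\norm{\eta\chi_{B_k}}_r\to 0$ fails and the bilinear term does not vanish.

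The statement survives because the term you discarded as ``only helps'' is exactly what compensates, and this is how the paper argues. Keeping the estimate $D_k^1\geq\blambda(B_k)/\norm{h_k}_s-\norm{\eta}_{r,B_k}$ (H\"older on $B_k$), one distinguishes two cases: if $\blambda(B_k)$ stays bounded away from zero along some subsequence, then $\blambda(B_k)/\norm{h_k}_s\to\infty$ there while $\norm{\eta}_{r,B_k}\leq\norm{\eta}_r$ remains bounded, so the quotient tends to $+\infty$; if $\blambda(B_k)\to 0$, then $\norm{\eta}_{r,B_k}\to 0$ by absolute continuity of the integral of the fixed $L^r$-function $\eta$ --- no pointwise convergence of $\chi_{B_k}$ is needed. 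Either way the $\{\bar u=0\}$-contribution has nonnegative limit inferior, and with this repair the rest of your argument goes through.
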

\begin{proof}
	Due to $s\in(1,\infty)$, $\fsub q_{s,0}(\bar u)$ is nonempty if and only if
	$\bar u\in L^s(\Omega)$ is an $s$-SD function, see \cref{lem:nonemptiness_Frechet_qs0}. 
	Thus, fix an $s$-SD function $\bar u\in L^s(\Omega)$.
	The inclusion ``$\subset$'' follows from \cref{lem:trivial_upper_bound_Frechet_qs0}.
	For the reverse inclusion,
	let $\eta \in L^r(\Omega)$ with $\{\eta\neq 0\} \subset \{\bar u= 0\}$ be given.
	We have to show 
	\begin{equation*}
		\liminf_{k \to \infty}
		\frac{
			q_{s,0}(\bar u + h_k) - q_{s,0}(\bar u) - \int_\Omega\eta(x)h_k(x)\,\mathrm dx
		}{
			\norm{h_k}_s
		}
		\ge 0
	\end{equation*}
	for all sequences $\seq{h_k}_{k \in \N} \subset L^s(\Omega)$
	with $\norm{h_k}_s \searrow 0$.
	For such a sequence, we set
	\begin{align*}
		D_k
		&:=
		\frac{
			q_{s,0}(\bar u + h_k) - q_{s,0}(\bar u) - \int_\Omega\eta(x)h_k(x)\,\mathrm dx
		}{
			\norm{h_k}_s
		}
		\\&
		=
		\frac{
			\int_{\{\bar u= 0\}} (\abs{h_k(x)}_0 - \eta(x) h_k(x)) \, \dx
		}{
			\norm{h_k}_s
		}
		+
		\frac{
			\int_{\{\bar u\neq 0\}} (\abs{\bar u(x) + h_k(x)}_0 - 1) \, \dx
		}{
			\norm{h_k}_s
		}
		=:
		D_k^1 + D_k^2.
	\end{align*}
	Let us validate $\liminf_{k \to \infty} D_k^1 \ge 0$.
	Using H\"older's inequality on $\{\bar u= 0\}\cap\{h_k\neq 0\}$
	and $\norm{h_k}_{s,\{\bar u= 0\}\cap\{h_k\neq 0\}}=\norm{h_k}_{s,\{\bar u= 0\}}$,
	we have
	\begin{align*}
		D_k^1 
		&\ge
		\frac{
			\int_{\{\bar u= 0\}}\abs{h_k(x)}_0\,\dx 
		}{
			\norm{h_k}_s
		}
		-
		\frac{ 
			\int_{\{\bar u=0\}}
			|\eta(x)h_k(x)|\,\dx
		}{
			\norm{h_k}_{s,\{\bar u= 0\}}
		}
		\\
		&
		\geq
		\frac{ \blambda(\{\bar u= 0\} \cap \{h_k\neq 0\}) }{ \norm{h_k}_{s} }
		-
		\norm{\eta}_{r, \{\bar u= 0\} \cap \{h_k\neq 0\}}
		.
	\end{align*}
	In case that 
	$\blambda(\{\bar u= 0\} \cap \{h_k\neq 0\}) \not\to 0$,
	this yields $D_k^1 \to \infty$.
	On the other hand,
	if we have
	$\blambda(\{\bar u= 0\} \cap \{h_k\neq 0\}) \to 0$,
	we get
	$\norm{\eta}_{r, \{\bar u= 0\} \cap \{h_k\neq 0\}} \to 0$.
	In any case, $\liminf_{k \to \infty} D_k^1 \ge 0$.

	It remains to check $\liminf_{k \to \infty} D_k^2 \ge 0$.
	This, however, can be distilled from the first part of the proof
	of \cref{lem:nonemptiness_Frechet_qs0} since $\bar u$ is an $s$-SD function.
	
	Combining these estimates, we have shown $\liminf_{k\to\infty}D_k\geq 0$
	which yields the claim.
\end{proof}

\begin{remark}\label{rem:Frechet_qs0_minimization}
	Similar to \cref{rem:Frechet_q10_vs_PMP}, we consider the unconstrained 
	minimization of the function $f+q_{s,0}$ on $L^s(\Omega)$ where $f\colon L^s(\Omega)\to\R$
	is Fr\'{e}chet differentiable and $s\in(1,\infty)$
	Then $-f'(\bar u)\in\fsub q_{s,0}(\bar u)$ is a necessary condition for $\bar u\in L^s(\Omega)$
	to be a local minimizer of $f+q_{s,0}$. 
	\Cref{thm:characterization_Frechet_qs0} now yields that $f'(\bar u)\in L^r(\Omega)$
	has to vanish on $\{\bar u\neq 0\}$. Moreover, the implicitly demanded 
	nonemptiness of $\fsub q_{s,0}(\bar u)$ requires that either $\bar u$ is equal to zero
	almost everywhere on $\Omega$ or that $\bar u$ tends to zero if at all slowly enough if 
	$\{\bar u\neq 0\}$
	is of positive measure since $\bar u$ must be an $s$-SD function, 
	see \cref{sec:s_SD_functions}. 
	In this regard, the obtained necessary optimality conditions clearly promote \emph{sparse} 
	controls $\bar u$.
\end{remark}

\subsection{Limiting subdifferential}\label{sec:limiting_p=0}

We now exploit \cref{thm:characterization_Frechet_qs0} in order to characterize the
limiting and singular subdifferential of $q_{s,0}$ for each $s\in(1,\infty)$. 
As already pointed out in \cref{sec:variational_analysis}, the limiting subdifferential of
$q_{1,0}$ coincides with its Fr\'{e}chet subdifferential due to \cite[Theorem~3.2]{Chieu2009}.
Anyway, the fact that $L^1(\Omega)$ is not an Asplund space underlines that the case $s=1$
might be of limited importance here.

\begin{theorem}\label{thm:characterization_limiting_qs0}
	Fix $s\in(1,\infty)$. Then we have
	\[
		\forall\bar u\in L^s(\Omega)\colon\quad
		\partial q_{s,0}(\bar u)=\{\eta\in L^r(\Omega)\,|\,\{\eta\neq 0\}\subset \{\bar u= 0\}\}.
	\]
\end{theorem}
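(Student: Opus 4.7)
The plan is to establish both inclusions, combining the Fr\'echet characterization from \cref{thm:characterization_Frechet_qs0} with a weak-strong duality argument.

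For ``$\subset$'', I would take $\eta \in \partial q_{s,0}(\bar u)$ with defining sequences $u_k \to \bar u$ in $L^s(\Omega)$, $\eta_k \weakly \eta$ in $L^r(\Omega)$, and $\eta_k \in \fsub q_{s,0}(u_k)$ for each $k$. \Cref{thm:characterization_Frechet_qs0} forces each $u_k$ to be $s$-SD and $\{\eta_k\neq 0\} \subset \{u_k=0\}$. After extracting a subsequence along which $u_k \to \bar u$ pointwise almost everywhere, the sets $A_k := \{\bar u\neq 0\} \cap \{u_k = 0\}$ satisfy $\chi_{A_k} \to 0$ pointwise a.e. For any fixed $\psi \in L^s(\Omega)$, dominated convergence (with majorant $\abs{\psi}^s$) yields $\chi_{A_k}\psi \to 0$ strongly in $L^s(\Omega)$. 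Testing the weak convergence $\eta_k \weakly \eta$ against $\chi_{\{\bar u\neq 0\}}\psi \in L^s(\Omega)$ and using the identity $\eta_k \chi_{\{\bar u\neq 0\}} = \eta_k \chi_{A_k}$ together with the boundedness of $\{\norm{\eta_k}_r\}_{k\in\N}$ (a consequence of weak convergence) gives
\[
	\int_\Omega \eta(x)\,\chi_{\{\bar u\neq 0\}}(x)\,\psi(x)\,\dx
	=
	\lim_{k\to\infty} \int_\Omega \eta_k(x)\,\chi_{A_k}(x)\,\psi(x)\,\dx
	=
	0.
\]
Varying $\psi$ then yields $\eta = 0$ almost everywhere on $\{\bar u \neq 0\}$, as desired.

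For ``$\supset$'', given $\eta \in L^r(\Omega)$ with $\{\eta\neq 0\} \subset \{\bar u= 0\}$, the strategy is to exhibit an approximation of $\bar u$ along which $\eta$ is a Fr\'echet subgradient. I would define $u_k \in L^s(\Omega)$ by $u_k(x) := \bar u(x)$ on $\{\abs{\bar u} \ge 1/k\} \cup \{\bar u = 0\}$ and $u_k(x) := \sgn(\bar u(x))/k$ on $\{0 < \abs{\bar u} < 1/k\}$. Then $\{u_k \neq 0\} = \{\bar u\neq 0\}$ (so in particular $q_{s,0}(u_k) = q_{s,0}(\bar u)$ for all $k$); the pointwise bound $\abs{u_k - \bar u} \le 1/k$ together with $\blambda(\Omega) < \infty$ yields $u_k \to \bar u$ in $L^s(\Omega)$; and $u_k$ is bounded away from zero on its support, hence an $s$-SD function by \cref{lem:slowly_decreasing_functions}. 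Since $\{\eta\neq 0\} \subset \{\bar u = 0\} = \{u_k = 0\}$, \cref{thm:characterization_Frechet_qs0} delivers $\eta \in \fsub q_{s,0}(u_k)$, and the constant sequence $\eta_k := \eta$ certifies $\eta \in \partial q_{s,0}(\bar u)$.

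The more delicate step is the first inclusion, since only weak convergence of $\{\eta_k\}$ in $L^r(\Omega)$ is available and pointwise reasoning on $\eta_k$ is not accessible. The trick will be to transfer the ``smallness'' from the functionals to the test side via $\eta_k \chi_{\{\bar u\neq 0\}} = \eta_k \chi_{A_k}$, exploiting that $\chi_{A_k}\psi$ converges strongly in $L^s(\Omega)$. The reverse inclusion is comparatively routine: the thresholded approximation raises small non-zero values of $\bar u$ up to magnitude $1/k$ without altering $\{u_k \neq 0\}$, so that $u_k$ automatically becomes $s$-SD while preserving both $q_{s,0}(u_k)$ and $L^s$-proximity to $\bar u$.
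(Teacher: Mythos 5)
Your proof is correct and follows essentially the same route as the paper's: the inclusion ``$\subset$'' rests on pointwise a.e.\ convergence of $u_k$ forcing $\eta_k\to 0$ a.e.\ on $\{\bar u\neq 0\}$ (your duality computation with test functions $\chi_{\{\bar u\neq 0\}}\psi$ simply makes explicit the paper's assertion that the weak limit must then vanish there), and the inclusion ``$\supset$'' builds a recovery sequence of functions bounded away from zero on their support so that \cref{lem:slowly_decreasing_functions} and \cref{thm:characterization_Frechet_qs0} apply. The only real deviation is your recovery sequence: where the paper truncates small values of $\bar u$ to zero (so that $\{u_k\neq 0\}$ shrinks and lower semicontinuity is invoked to obtain $q_{s,0}(u_k)\to q_{s,0}(\bar u)$), you lift them to magnitude $1/k$, which preserves the support exactly and turns the function-value convergence into an identity --- a slightly cleaner variant of the same idea.
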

\begin{proof}
	Fix $\bar u\in L^s(\Omega)$.
	In case where $\bar u= 0$ holds almost everywhere on $\Omega$, 
	\cref{thm:characterization_Frechet_qs0} already
	gives us $\fsub q_{s,0}(\bar u)=L^r(\Omega)$ which implies
	$\partial q_{s,0}(\bar u)=L^r(\Omega)$.
	Thus, we assume that $\{\bar u\neq 0\}$ possesses positive measure for the remainder of the proof 
	and verify both inclusions separately.
	
	In order to show the inclusion ``$\supset$'', we fix $\eta\in L^r(\Omega)$ satisfying
	$\{\eta\neq 0\}\subset \{\bar u= 0\}$.
	For each $k\in\N$, we define $\Omega_k:=\{\abs{\bar u}\geq 1/k\}$.
	Clearly, these sets are measurable and provide a nested exhaustion of $\{\bar u\neq 0\}$.
	Now, set $u_k:=\bar u\chi_{\Omega_k}$ for each $k\in\N$ and observe that
	$\{u_k=0\}\supset \{\bar u= 0\}$ holds.
	Invoking \cref{lem:slowly_decreasing_functions}, 
	$u_k$ is an $s$-SD function for each $k\in\N$, so that
	\cref{thm:characterization_Frechet_qs0} yields
	$\eta\in\widehat{\partial}q_{s,0}(u_k)$ for each $k\in\N$. Due to
	\begin{align*}
		\norm{u_k-\bar u}_s^s
		=
		\int_\Omega|\bar u(x)|^s(1-\chi_{\Omega_k}(x))\,\mathrm dx
		\leq
		\frac{\blambda(\Omega)}{k^s}
		\to 
		0,
	\end{align*}
	we find $u_k\to\bar u$ in $L^s(\Omega)$.
	Exploiting $\Omega_k\subset\{\bar u\neq 0\}$ for each $k\in\N$ and lower
	semicontinuity of $q_{s,0}$, see \cref{lem:lower_semicontinuity_of_qs0}, we find
	\begin{align*}
		\blambda(\{\bar u\neq 0\})
		=
		q_{s,0}(\bar u)
		&
		\leq
		\liminf\limits_{k\to\infty}q_{s,0}(u_k)
		\\
		&
		\leq
		\limsup\limits_{k\to\infty}q_{s,0}(u_k)
		=
		\limsup\limits_{k\to\infty}\blambda(\Omega_k)
		\leq
		\blambda(\{\bar u\neq 0\}),
	\end{align*}
	i.e., $q_{s,0}(u_k)\to q_{s,0}(\bar u)$.
	Thus, by definition of the limiting subdifferential, 
	we have shown $\eta\in \partial q_{s,0}(\bar u)$.
	
	In order to prove ``$\subset$'', we fix $\eta\in\partial q_{s,0}(\bar u)$.
	Thus, we find sequences $\{u_k\}_{k\in\N}\subset L^s(\Omega)$ and
	$\{\eta_k\}_{k\in\N}\subset L^r(\Omega)$ which satisfy 
	$u_k\to\bar u$ in $L^s(\Omega)$,
	$q_{s,0}(u_k)\to q_{s,0}(\bar u)$, 
	$\eta_k\weakly\eta$ in $L^r(\Omega)$, and
	$\eta_k\in\fsub q_{s,0}(u_k)$ for all $k\in\N$. 
	Along a subsequence (without relabeling), we may assume that $\{u_k\}_{k\in\N}$ converges pointwise
	almost everywhere to $\bar u$.
	Thus, for almost every $x\in\{\bar u\neq 0\}$, we have $u_k(x)\to\bar u(x)\neq 0$, i.e.,
	$x\in\{u_k\neq 0\}$ and, thus, $x\in \{\eta_k=0\}$ for sufficiently large $k\in\N$. 
	Thus, almost everywhere on $\{\bar u\neq 0\}$, $\{\eta_k\}_{k\in\N}$ converges pointwise to $0$.
	From $\eta_k\weakly\eta$ in $L^r(\Omega)$, we infer that the weak limit needs to
	vanish on $\{\bar u\neq 0\}$, i.e., $\{\bar u\neq 0\}\subset \{\eta= 0\}$.
	This, however, also means $\{\eta\neq 0\}\subset \{\bar u= 0\}$.
\end{proof}

Reprising the above proof while incorporating some nearby minor adjustments, one can show the following
result regarding the singular subdifferential of $q_{s,0}$.
\begin{theorem}\label{thm:characterization_singular_qs0}
	Fix $s\in(1,\infty)$. Then we have
	\[
		\forall\bar u\in L^s(\Omega)\colon\quad
		\partial^\infty q_{s,0}(\bar u)=\{\eta\in L^r(\Omega)\,|\,\{\eta\neq 0\}\subset \{\bar u= 0\}\}.
	\]
\end{theorem}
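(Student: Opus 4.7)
My plan is to mirror the proof of \cref{thm:characterization_limiting_qs0} and merely insert the scaling factors $t_k$ where needed, since the defining set relation $\{\eta_k \neq 0\}\subset\{u_k=0\}$ from \cref{thm:characterization_Frechet_qs0} is unaffected by positive rescalings. Fix $\bar u\in L^s(\Omega)$. If $\bar u=0$ a.e.\ on $\Omega$, then $\{\bar u=0\}=\Omega$ up to null sets, so the right-hand side is all of $L^r(\Omega)$; moreover $\fsub q_{s,0}(0)=L^r(\Omega)$ by \cref{thm:characterization_Frechet_qs0}, so for any $\eta\in L^r(\Omega)$ the choices $u_k\equiv 0$, $t_k\searrow 0$, and $\eta_k:=\eta/t_k$ yield the required witnessing sequences. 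Hence I may assume $\blambda(\{\bar u\neq 0\})>0$ in what follows.

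For ``$\supset$'', fix $\eta\in L^r(\Omega)$ with $\{\eta\neq 0\}\subset\{\bar u=0\}$ and recycle the truncations $u_k:=\bar u\,\chi_{\Omega_k}$ with $\Omega_k:=\{\abs{\bar u}\geq 1/k\}$ from the previous proof. Each $u_k$ is $s$-SD by \cref{lem:slowly_decreasing_functions}, and $\{u_k=0\}\supset\{\bar u=0\}\supset\{\eta\neq 0\}$. Now choose any $t_k\searrow 0$ and set $\eta_k:=\eta/t_k$. Since $\{\eta_k\neq 0\}=\{\eta\neq 0\}\subset\{u_k=0\}$, \cref{thm:characterization_Frechet_qs0} yields $\eta_k\in\fsub q_{s,0}(u_k)$. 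The convergences $u_k\to\bar u$ in $L^s(\Omega)$ and $q_{s,0}(u_k)\to q_{s,0}(\bar u)$ follow exactly as in the proof of \cref{thm:characterization_limiting_qs0}, using the pointwise bound $\norm{u_k-\bar u}_s^s\leq \blambda(\Omega)/k^s$ and the sandwich argument based on \cref{lem:lower_semicontinuity_of_qs0}. Finally, $t_k\eta_k=\eta$ for every $k$, so trivially $t_k\eta_k\weakly\eta$ in $L^r(\Omega)$, and $\eta\in\partial^\infty q_{s,0}(\bar u)$.

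For ``$\subset$'', let $\eta\in\partial^\infty q_{s,0}(\bar u)$ and pick witnessing sequences $\{u_k\}_{k\in\N}$, $\{t_k\}_{k\in\N}$, $\{\eta_k\}_{k\in\N}$. Since $\eta_k\in\fsub q_{s,0}(u_k)$ is nonempty, \cref{thm:characterization_Frechet_qs0} gives $\{\eta_k\neq 0\}\subset\{u_k=0\}$. Along a suitable subsequence (without relabeling), $u_k\to\bar u$ pointwise a.e.\ on $\Omega$. For almost every $x\in\{\bar u\neq 0\}$ we thus have $u_k(x)\to\bar u(x)\neq 0$, so $u_k(x)\neq 0$ for all sufficiently large $k$, whence $\eta_k(x)=0$ and hence $(t_k\eta_k)(x)=0$ for all sufficiently large $k$. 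Therefore $\{t_k\eta_k\}_{k\in\N}$ converges pointwise a.e.\ on $\{\bar u\neq 0\}$ to $0$. Combining this with the weak convergence $t_k\eta_k\weakly\eta$ in $L^r(\Omega)$ via a Mazur-lemma argument (extract convex combinations of $\{t_k\eta_k\}_{k\in\N}$ that converge strongly to $\eta$ in $L^r(\Omega)$ and, passing to a subsequence, pointwise a.e.), the weak limit $\eta$ must coincide a.e.\ on $\{\bar u\neq 0\}$ with the pointwise a.e.\ limit $0$. This gives $\{\eta\neq 0\}\subset\{\bar u=0\}$, as claimed.

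The main obstacle, as in the proof of \cref{thm:characterization_limiting_qs0}, is the identification of the weak limit $\eta$ with the pointwise a.e.\ limit on $\{\bar u\neq 0\}$; once this is settled via Mazur's lemma, the presence of the bounded positive scalings $t_k$ plays no essential role, because the support condition coming from \cref{thm:characterization_Frechet_qs0} is invariant under positive rescaling.
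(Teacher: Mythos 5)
Your proposal is correct and follows exactly the route the paper intends: the paper itself only remarks that \cref{thm:characterization_singular_qs0} is obtained by ``reprising'' the proof of \cref{thm:characterization_limiting_qs0} with minor adjustments, and you have identified precisely those adjustments (taking $\eta_k:=\eta/t_k$ so that $t_k\eta_k=\eta$ in the ``$\supset$'' direction and in the trivial case, and observing that $t_k\eta_k$ still vanishes eventually pointwise a.e.\ on $\{\bar u\neq 0\}$ in the ``$\subset$'' direction, which together with $t_k\eta_k\weakly\eta$ forces $\eta=0$ there). The Mazur-lemma identification of the weak limit with the pointwise limit is a valid way to make rigorous the step that the paper leaves implicit.
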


As a corollary of \cref{thm:characterization_Frechet_qs0,thm:characterization_singular_qs0}, 
we can fully characterize the Lipschitzian properties of $q_{s,0}$.

\begin{corollary}\label{cor:Lipschitzness_qs0}
	For $s\in(1,\infty)$, $q_{s,0}$ is nowhere Lipschitz continuous. 
\end{corollary}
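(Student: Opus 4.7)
The plan is to apply \cref{lem:Lipschitzness_s>1}, which characterizes Lipschitz continuity of $q_{s,0}$ at $\bar u$ via the vanishing of the singular subdifferential together with the sequential normal epi-compactness (SNEC) property. I would show that at every $\bar u\in L^s(\Omega)$, at least one of these two conditions fails.

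The easy case is $\blambda(\{\bar u=0\})>0$: here \cref{thm:characterization_singular_qs0} immediately produces nonzero elements of $\partial^\infty q_{s,0}(\bar u)$, for instance $\chi_{\{\bar u=0\}}$, so condition~(a) of \cref{lem:Lipschitzness_s>1} is violated.

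The interesting case is $\blambda(\{\bar u=0\})=0$, in which \cref{thm:characterization_singular_qs0} yields $\partial^\infty q_{s,0}(\bar u)=\{0\}$ and (a) holds trivially. Here I would falsify the SNEC condition directly by explicit construction. The idea is to approximate $\bar u$ by functions $u_k$ whose zero sets contain a positive but shrinking piece $\Omega_k$, and then place Fr\'echet subgradients concentrated on $\Omega_k$ with unit $L^r$-norm after suitable rescaling. Concretely, pick measurable $\Omega_k\subset\Omega$ with $0<\blambda(\Omega_k)\to 0$ (possible by non-atomicity of $\blambda$) and set
\[
	u_k:=\bar u\,\chi_{\{|\bar u|>1/k\}\setminus\Omega_k}.
\]
By the truncation level $1/k$ and \cref{lem:slowly_decreasing_functions}, each $u_k$ is $s$-SD; since $\blambda(\{\bar u=0\})=0$, dominated convergence and absolute continuity of the Lebesgue integral yield both $u_k\to\bar u$ in $L^s(\Omega)$ and $q_{s,0}(u_k)\to q_{s,0}(\bar u)$. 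Because $\Omega_k\subset\{u_k=0\}$, \cref{thm:characterization_Frechet_qs0} allows me to choose $\eta_k:=\blambda(\Omega_k)^{-\alpha}\chi_{\Omega_k}\in\fsub q_{s,0}(u_k)$ for any fixed $\alpha>1/r$. Rescaling by $t_k:=\blambda(\Omega_k)^{\alpha-1/r}\searrow 0$ produces $t_k\eta_k=\blambda(\Omega_k)^{-1/r}\chi_{\Omega_k}$ with $\norm{t_k\eta_k}_r=1$, and the weak convergence $t_k\eta_k\weakly 0$ in $L^r(\Omega)$ follows from a one-line H\"older estimate combined with absolute continuity of the integral. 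Hence SNEC fails and \cref{lem:Lipschitzness_s>1} concludes.

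The main bookkeeping obstacle, I expect, is ensuring that $u_k$ simultaneously (i) is close to $\bar u$ in $L^s$, (ii) has $q_{s,0}$-values converging, (iii) is $s$-SD so that $\fsub q_{s,0}(u_k)$ is rich enough to contain $\eta_k$, and (iv) has a shrinking positive-measure zero set on which to concentrate those subgradients. The joint truncation-and-removal construction above handles all four requirements uniformly, regardless of whether $\bar u$ itself is $s$-SD or is bounded away from zero on its support.
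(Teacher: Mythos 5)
Your proposal is correct and follows essentially the same route as the paper: the case $\blambda(\{\bar u=0\})>0$ is dispatched via \cref{thm:characterization_singular_qs0}, and otherwise SNEC is violated by truncating the support of $\bar u$, deleting a shrinking positive-measure set, and concentrating unit-$L^r$-norm (after rescaling) Fr\'echet subgradients on that set, exactly as in the paper's construction with $u_k=\bar u\chi_{\Omega_k\setminus\Omega_k'}$ and $\eta_k=\blambda(\Omega_k')^{-1/r}\chi_{\Omega_k'}$. The only differences (your power-$\alpha$ rescaling versus the paper's $t_k=1/k$, and taking the removed set inside $\Omega$ rather than inside the truncated support) are cosmetic.
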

\begin{proof}
	Using \cref{lem:Lipschitzness_s>1}, \cref{thm:characterization_singular_qs0} shows that
	$q_{s,0}$ cannot be Lipschitz continuous at all points $\bar u\in L^s(\Omega)$
	which satisfy $\blambda(\{\bar u=0\})>0$ since $\partial^\infty q_{s,0}(\bar u)$ does not reduce
	to $\{0\}$ in this situation. 
	
	Thus, let us consider $\bar u\in L^s(\Omega)$ such that $\bar u\neq 0$ holds almost everywhere
	on $\Omega$. 
	In the reminder of this proof, we show that $q_{s,0}$ violates the condition from
	\cref{lem:Lipschitzness_s>1}~\ref{item:SNEC} at $\bar u$ which implies that $q_{s,p}$
	cannot be Lipschitz at $\bar u$.
	Thus, pick a scalar $\alpha>0$ such that $\{\abs{\bar u}\geq\alpha\}$ possesses positive
	measure and set $\Omega_k:=\{\abs{\bar u}\geq\alpha/k\}$ for each $k\in\N$.
	By construction, $\{\Omega_k\}_{k\in\N}$ is an exhaustion of $\{\bar u\neq 0\}$, and each
	of the sets $\Omega_k$, $k\in\N$, possesses positive measure.
	Thus, we can pick a sequence $\{\Omega_k'\}_{k\in\N}$ of measurable subsets of
	$\Omega$ such that $\blambda(\Omega_k')\searrow 0$ and, for each $k\in\N$, 
	$\Omega_k'\subset\Omega_k$.
	For each $k\in\N$, we define $u_k:=\bar u\chi_{\Omega_k\setminus\Omega_k'}$ and 
	$\eta_k:=\blambda(\Omega_k')^{-1/r}\chi_{\Omega_k'}$.
	Similar as in the proof of \cref{thm:characterization_limiting_qs0}, 
	we can show $u_k\to\bar u$ in $L^s(\Omega)$ and $q_{s,0}(u_k)\to q_{s,0}(\bar u)$.
	Furthermore, for each $h\in L^s(\Omega)$, we find
	\begin{align*}
		\left|\int_\Omega \eta_k(x)h(x)\,\dx\right|
		&\leq
		\norm{h}_{s,\Omega_k'}\norm{\eta_k}_{r,\Omega_k'}
		=
		\norm{h}_{s,\Omega_k'}
	\end{align*}
	by applying H\"older's inequality on $\Omega_k'$, 
	and due to $\norm{h}_{s,\Omega_k'}\to 0$, the above estimate
	yields $\eta_k\weakly 0$ in $L^r(\Omega)$. Furthermore, $\norm{\eta_k}_r=1$ for each
	$k\in\N$ guarantees that this convergence is not strong.
	Finally, observe that due to \cref{lem:slowly_decreasing_functions} and
	\cref{thm:characterization_Frechet_qs0},
	we find $\eta_k\in\tfrac1k\fsub q_{s,0}(u_k)$ for each $k\in\N$.
	Thus, \cref{lem:Lipschitzness_s>1} shows that $q_{s,0}$ cannot be Lipschitz continuous at $\bar u$.
\end{proof}

\section{The case $p\in(0,1)$}\label{sec:0<p<1}

Throughout the section, we assume that $p\in(0,1)$ holds.
Here, we study the variational properties of the functional $q_{s,p}$. 
Basically, although some proofs seem to be a little technical,
we proceed in similar way as in \cref{sec:p=0} in order to
compute the subdifferentials of interest. 

\subsection{Fr\'{e}chet subdifferential}

Again, we start to prove validity of a natural upper bound for the Fr\'{e}chet subdifferential
of $q_{s,p}$.
\begin{lemma}
	\label{lem:subderivative_by_derivative}
	For given $s\in[1,\infty)$ and $\bar u\in L^s(\Omega)$, we have
	\[
		\widehat{\partial}q_{s,p}(\bar u)
		\subset
		\{
			\eta\in L^r(\Omega)
			\,|\,
			\eta = p \abs{\bar u}^{p-2} \bar u
			\text{ a.e.\ on } \{\bar u\neq 0\}
		\}.
	\]
\end{lemma}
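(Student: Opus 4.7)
The plan is to pick an arbitrary $\eta \in \widehat{\partial}q_{s,p}(\bar u)$ and probe the defining liminf inequality with carefully chosen test perturbations that are supported where $|\bar u|$ is bounded away from zero. Concretely, for each $\varepsilon > 0$ I set $\Omega_\varepsilon := \{|\bar u| \geq \varepsilon\}$ and consider only test functions $\phi \in L^\infty(\Omega)$ with $\operatorname{supp}\phi \subset \Omega_\varepsilon$. For such $\phi$, the function $\bar u + t\phi$ stays bounded away from zero on $\operatorname{supp}\phi$ whenever $|t| < \varepsilon / (2\|\phi\|_\infty)$, so we may Taylor-expand the smooth integrand $y \mapsto |y|^p$ there.

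For the two-sided variation $h := t\phi$ with $t \in \R$ small, I would use the pointwise expansion
\[
	|\bar u(x) + t\phi(x)|^p - |\bar u(x)|^p
	= t\,p\,|\bar u(x)|^{p-2}\bar u(x)\,\phi(x) + R_t(x),
\]
valid almost everywhere on $\Omega_\varepsilon$ (and with both sides equal to zero off $\operatorname{supp}\phi$), where the remainder $R_t(x)$ is bounded by a constant multiple of $t^2$ uniformly in $x \in \operatorname{supp}\phi$ (using that the second derivative of $|\cdot|^p$ is bounded on $[\varepsilon/2,\infty)$). Integrating and noting that $\operatorname{supp}\phi$ has finite measure yields
\[
	q_{s,p}(\bar u + t\phi) - q_{s,p}(\bar u)
	= t\,p\int_\Omega |\bar u(x)|^{p-2}\bar u(x)\,\phi(x)\,\dx + \oo(t)
\]
as $t \to 0$. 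Note in particular that $p|\bar u|^{p-2}\bar u\,\phi \in L^1(\Omega)$ because $\phi$ is supported on $\Omega_\varepsilon$ where $|\bar u|^{p-1}$ is bounded by $\varepsilon^{p-1}$.

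Applying the Fr\'echet subdifferential inequality along the sequence $h_k := t_k\phi$ with $t_k \searrow 0$ and dividing by $t_k\|\phi\|_s$ then gives
\[
	\int_\Omega \bigl(p\,|\bar u(x)|^{p-2}\bar u(x) - \eta(x)\bigr)\phi(x)\,\dx \geq 0,
\]
and repeating the argument with $t_k \nearrow 0$ (or equivalently replacing $\phi$ by $-\phi$) yields the reverse inequality. Thus $\int_\Omega (p|\bar u|^{p-2}\bar u - \eta)\phi\,\dx = 0$ for every bounded $\phi$ supported in $\Omega_\varepsilon$. Taking $\phi = \chi_A$ for arbitrary measurable $A \subset \Omega_\varepsilon$ and invoking the Lebesgue differentiation/Du Bois-Reymond argument forces $\eta = p|\bar u|^{p-2}\bar u$ almost everywhere on $\Omega_\varepsilon$. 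Letting $\varepsilon \searrow 0$ and using $\{\bar u \neq 0\} = \bigcup_{n\in\N}\Omega_{1/n}$ gives the claim.

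The main technical care lies in the Taylor step: one must bound the remainder $R_t$ uniformly in $x$ on the shrinking set $\operatorname{supp}\phi$, using that $\bar u + t\phi$ stays in the region $|\,\cdot\,| \geq \varepsilon/2$ for small $t$, and in verifying that $\oo(t)$ in $L^1$ translates to $\oo(t)$ after division by $\|h_k\|_s = |t_k|\|\phi\|_s$ so that the liminf computation actually converges to the expected linear functional in $\phi$. Once the expansion is established, the remaining deduction is a routine bilateral-variation plus density-of-test-functions argument.
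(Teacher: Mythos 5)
Your proposal is correct and follows essentially the same route as the paper: both probe the Fr\'echet subdifferential inequality with bilateral variations supported on $\{\abs{\bar u}\ge\varepsilon\}$, linearize the integrand there, deduce $\int_A(p\abs{\bar u}^{p-2}\bar u-\eta)\,\dx=0$ for all measurable $A$ in that set, and exhaust $\{\bar u\neq 0\}$ by letting $\varepsilon\searrow 0$. The only (immaterial) differences are that the paper uses the specific perturbations $h_k=\pm k^{-1}\chi_B$ and passes to the limit via dominated convergence, whereas you use general bounded test functions and a uniform second-order Taylor remainder bound.
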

\begin{proof}
	Let $\eta  \in \widehat{\partial}q_{s,0}(\bar u)$
	be given.
	For $\varepsilon > 0$, we set $A_\varepsilon := \{\abs{\bar u} > \varepsilon\}$.
	For an arbitrary measurable subset $B \subset A_\varepsilon$ of positive measure, we
	define a sequence $\{h_k\}_{k\in\N}\subset L^s(\Omega)$ by means of
	$h_k := k^{-1} \chi_{B}$ for each $k\in\N$.
	Clearly, we have $\norm{h_k}_s\searrow 0$, so the definition of the Fr\'{e}chet subdifferential
	yields
	\begin{align*}
		0
		&\le
		\blambda(B)^{1/s}
		\liminf_{k \to \infty}
		\frac{
			q_{s,p}(\bar u + h_k) - q_{s,p}(\bar u) - \int_\Omega \eta(x)h_k(x)\,\mathrm dx
		}{
			\norm{h_k}_s
		}
		\\&=
		\liminf_{k \to \infty}
		\int_{B} \bigl(k \parens*{ \abs{\bar u(x) + 1/k}^p - \abs{\bar u(x)}^p } - \eta(x)\bigr) \, \dx
		=
		\int_{B} \bigl(p \abs{\bar u(x)}^{p-2} \bar u(x) - \eta(x)\bigr) \, \dx
		.
	\end{align*}
	Note that we used
	the dominated convergence theorem with the integrable, dominating function
	$(p\varepsilon^{p-1}+|\eta|)\chi_B$
	for the last equality.
	Similarly, we can use the sequence $\{\tilde h_k\}_{k\in\N}\subset L^s(\Omega)$
	given by $\tilde h_k := -k^{-1}\chi_B$ for each $k\in\N$
	to obtain the reverse inequality.
	Since $B \subset A_\varepsilon$ was arbitrary,
	this shows
	$\eta = p \abs{\bar u}^{p-2} \bar u$ almost everywhere on $A_\varepsilon$.
	Since $\{\bar u\neq 0\} = \bigcup_{\varepsilon>0} A_{\varepsilon}$ holds,
	the claim has been shown.
\end{proof}

We note that, technically, the above proof also applies to the setting $p = 0$
and, thus, provides another possible validation of \cref{lem:trivial_upper_bound_Frechet_qs0}.
However, let us emphasize that the proof we provided for \cref{lem:trivial_upper_bound_Frechet_qs0}
is much simpler and does not exploit deeper results from integration theory like the dominated
convergence theorem.

Similar to \cref{lem:nonemptiness_Frechet_qs0}, we aim to characterize all points in
$L^s(\Omega)$ where the associated Fr\'{e}chet subdifferential of $q_{s,p}$ is nonempty.
\begin{lemma}\label{lem:nonemptiness_Frechet_qsp}
	Fix $s\in[1,\infty)$ and $\bar u\in L^s(\Omega)$.
	Then $\fsub q_{s,p}(\bar u)$ is nonempty if and only if one of the following conditions is valid:
	\begin{enumerate}
		\item\label{item:nonemptiness_Frechet_qsp_trivial} 
			$\bar u=0$ holds almost everywhere on $\Omega$,
		\item\label{item:nonemptiness_Frechet_qsp} 
			it holds $s>1$ and 
			$\abs{\bar u}^{p-1}\chi_{\{\bar u\neq 0\}}\in L^r(\Omega)$ is valid.
	\end{enumerate}
\end{lemma}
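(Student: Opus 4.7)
My plan is to establish the biconditional by treating necessity and sufficiency separately. The natural Fr\'echet subgradient candidate at $\bar u$ is
\[
	\eta := p\abs{\bar u}^{p-2}\bar u\,\chi_{\{\bar u\neq 0\}},
\]
which belongs to $L^r(\Omega)$ precisely when $\abs{\bar u}^{p-1}\chi_{\{\bar u\neq 0\}}\in L^r(\Omega)$. Hence the integrability hypothesis in~\ref{item:nonemptiness_Frechet_qsp} is nothing but the requirement that this candidate is available.

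\emph{Necessity.} Suppose $\fsub q_{s,p}(\bar u)\neq\varnothing$ and that $\bar u$ does not vanish a.e.\ on $\Omega$. By \cref{lem:subderivative_by_derivative}, every $\eta\in\fsub q_{s,p}(\bar u)$ equals $p\abs{\bar u}^{p-2}\bar u$ almost everywhere on $\{\bar u\neq 0\}$, so $\eta\in L^r(\Omega)$ yields $\abs{\bar u}^{p-1}\chi_{\{\bar u\neq 0\}}\in L^r(\Omega)$. To exclude $s=1$ (where $r=\infty$), this forces $\abs{\bar u}\geq\rho$ on a set of positive measure; intersecting with $\{\abs{\bar u}\leq M\}$ for sufficiently large $M$ yields a set $\Omega'$ on which $\abs{\bar u}$ is two-sidedly bounded. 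Testing against $h_k := -\bar u\chi_{\Omega_k'}$ for shrinking measurable $\Omega_k'\subset\Omega'$, a short computation gives
\[
	\frac{q_{1,p}(\bar u+h_k)-q_{1,p}(\bar u)-\int_\Omega\eta\,h_k\,\dx}{\norm{h_k}_1}
	=
	(p-1)\,\frac{\int_{\Omega_k'}\abs{\bar u}^p\,\dx}{\int_{\Omega_k'}\abs{\bar u}\,\dx}
	\leq
	(p-1)\,\rho^p/M < 0,
\]
which contradicts $\eta\in\fsub q_{1,p}(\bar u)$, in direct analogy with the $s=1$ step of \cref{lem:nonemptiness_Frechet_qs0}.

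\emph{Sufficiency.} Under~\ref{item:nonemptiness_Frechet_qsp_trivial} one has $q_{s,p}(\bar u)=0$, and since $q_{s,p}\geq 0$, immediately $0\in\fsub q_{s,p}(\bar u)$. Under~\ref{item:nonemptiness_Frechet_qsp} I take $\eta$ as above and verify the defining liminf inequality along any $\seq{h_k}_{k\in\N}\subset L^s(\Omega)$ with $\norm{h_k}_s\searrow 0$, passing to a subsequence with $h_k\to 0$ pointwise a.e. Setting $e_k := \abs{\bar u+h_k}^p - \abs{\bar u}^p - p\abs{\bar u}^{p-2}\bar u\,h_k$, the numerator splits as
\[
	\int_{\{\bar u=0\}}\abs{h_k}^p\,\dx
	+
	\int_{\{\bar u\neq 0\}}e_k\,\dx,
\]
the first integral being nonnegative. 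I split $\{\bar u\neq 0\}$ further into $G_k := \{2\abs{h_k}\leq\abs{\bar u}\}$ and $H_k := \{\bar u\neq 0,\ 2\abs{h_k} > \abs{\bar u}\}$. On $G_k$, $\bar u$ and $\bar u+h_k$ share the same sign, and Taylor's formula with integral remainder gives the pointwise bound $\abs{e_k}\leq C\,\abs{\bar u}^{p-2}h_k^2$; writing the right-hand side as $(\abs{\bar u}^{p-2}\abs{h_k})\cdot\abs{h_k}$ and applying H\"older's inequality reduces matters to $\norm{\abs{\bar u}^{p-2}\abs{h_k}\chi_{G_k}}_r\to 0$, which follows by dominated convergence with the integrable dominant $2^{-1}\abs{\bar u}^{p-1}\chi_{\{\bar u\neq 0\}}\in L^r(\Omega)$. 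On $H_k$, the key pointwise inequality $\abs{h_k}^{p-1}\leq 2^{1-p}\abs{\bar u}^{p-1}$ (using $\abs{h_k}>\abs{\bar u}/2$ and $p-1<0$) upgrades to $\abs{h_k}^p\leq 2^{1-p}\abs{\bar u}^{p-1}\abs{h_k}$; since $\chi_{H_k}\to 0$ a.e.\ on $\{\bar u\neq 0\}$, each of the three terms in the crude bound $\abs{e_k}\leq\abs{\bar u+h_k}^p + \abs{\bar u}^p + p\abs{\bar u}^{p-1}\abs{h_k}$ integrates to $o(\norm{h_k}_s)$ by H\"older and dominated convergence. Summing all contributions yields the required $\liminf \geq 0$.

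\emph{Main obstacle.} The delicate step is the $G_k$ estimate: the Taylor bound of order $\abs{\bar u}^{p-2}h_k^2$ only yields an $O(\norm{h_k}_s)$ contribution under the crude pointwise reduction $h_k^2\leq\abs{\bar u}\abs{h_k}/2$, which is not enough for the liminf to be nonnegative. The required improvement to $o(\norm{h_k}_s)$ rests on the dominated-convergence argument above, which uses the hypothesis $\abs{\bar u}^{p-1}\chi_{\{\bar u\neq 0\}}\in L^r(\Omega)$ as the integrable dominant -- this explains why the listed integrability is precisely the sharp condition for nonemptiness.
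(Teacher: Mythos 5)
Your proof is correct, and both halves take routes that genuinely differ from the paper's. For necessity at $s=1$, the paper perturbs by $h_k=m_k^{-1/2}\bar u\chi_{\Omega_k'}$ with a blowing-up amplitude and computes the limit of the resulting Taylor-type quotient, whereas you simply set $\bar u$ to zero on shrinking sets via $h_k=-\bar u\chi_{\Omega_k'}$; your choice gives the exact value $(p-1)\norm{\bar u}_{p,\Omega_k'}^p/\norm{\bar u}_{1,\Omega_k'}$ for the difference quotient, which is bounded above by the negative constant $(p-1)\rho^p/M$ once $\abs{\bar u}$ is two-sidedly bounded on $\Omega'$ --- a cleaner computation that also mirrors the $s=1$ step of \cref{lem:nonemptiness_Frechet_qs0}. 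For sufficiency, the paper writes $h_k=c_k\bar u$ on $\set{\bar u\neq 0}$ and partitions into four regions by the size of the relative perturbation $c_k$ (thresholds $-1/p$, $-1/2$, $1/2$), handling each with an elementary inequality for $\abs{1+c}^p-1-pc$; you partition into only two regions according to whether $2\abs{h_k}\le\abs{\bar u}$, using the Taylor remainder bound $\abs{e_k}\le C\abs{\bar u}^{p-2}h_k^2$ where the perturbation is relatively small and crude subadditivity together with $\abs{h_k}^p\le 2^{1-p}\abs{\bar u}^{p-1}\abs{h_k}$ where it is relatively large. The engine is identical in both proofs --- H\"older's inequality against $\norm{h_k}_s$ combined with dominated convergence driven by the dominant $\abs{\bar u}^{p-1}\chi_{\{\bar u\neq 0\}}\in L^r(\Omega)$ --- but your two-region split is more economical and makes the role of the integrability hypothesis more transparent. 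Two small points should be made explicit: the reduction to a pointwise a.e.\ convergent subsequence needs the standard contradiction argument (if the liminf were negative, pass to a subsequence realizing it and then to an a.e.\ convergent sub-subsequence, along which your estimates still apply), and $G_k$ should be defined as a subset of $\set{\bar u\neq 0}$ so that $\abs{\bar u}^{p-2}\abs{h_k}$ is well defined there.
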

\begin{proof}
	In the first part of this proof, we show that, in the presence
	of~\ref{item:nonemptiness_Frechet_qsp_trivial} or~\ref{item:nonemptiness_Frechet_qsp},
	 $\bar\eta\colon\Omega\to\R$ given by $\bar\eta:=p\abs{\bar u}^{p-2}\bar u\chi_{\{\bar u\neq 0\}}$
	belongs to $\fsub q_{s,p}(\bar u)$. This is clearly obvious in case where $\bar u$ 
	vanishes almost everywhere on $\Omega$,
	i.e., when~\ref{item:nonemptiness_Frechet_qsp_trivial} holds,
	so let us focus on the situation given in~\ref{item:nonemptiness_Frechet_qsp}. 
	First, we observe that $\bar\eta$ defined above is a function from $L^r(\Omega)$ due to the
	requirements in~\ref{item:nonemptiness_Frechet_qsp}. Next, we show that for each
	sequence $\{h_k\}_{k\in\N}\subset L^s(\Omega)$ satisfying $\norm{h_k}_s\searrow 0$, we have
	\begin{equation}\label{eq:special_eta_in_fsub}
		\liminf\limits_{k\to\infty}
		\frac{
			\int_{\{\bar u\neq 0\}}
					\bigl(|\bar u(x)+h_k(x)|^p-|\bar u(x)|^p-p|\bar u(x)|^{p-2}\bar u(x)h_k(x)\bigr)
					\,\dx
		}{
		\norm{h_k}_s
		}
		\geq 0.
	\end{equation}
	One can easily check that by definition of the Fr\'{e}chet subdifferential and $\bar\eta$, 
	this is sufficient for $\bar\eta\in\fsub q_{s,p}(\bar u)$.
	
	It will be beneficial to write $h_k =c_k\bar u+h_k\chi_{\{\bar u=0\}}$ for each $k\in\N$ where
	the measurable function $c_k\colon\Omega\to\R$ is given by
	$c_k := h_k \bar u^{-1} \chi_{\set{\bar u \ne 0}}$.
	Furthermore, we will make use of the set 
	$\Omega_k:=\{h_k\neq 0\}\cap\{\bar u\neq 0\}$ for each $k\in\N$.
	With the aid of these definitions, we can rewrite the quotient of interest by means of
	\begin{equation}\label{eq:quotient_of_interest}
		\frac{
			\int_{\Omega_k}
				\bigl(|1+c_k(x)|^p-1-pc_k(x)\bigr)|\bar u(x)|^p\,\dx
		}{
			\left(\int_{\Omega_k}|c_k(x)|^s|\bar u(x)|^s\,\dx
				+\int_{\{\bar u=0\}}|h_k(x)|^s\,\dx\right)^{1/s}
		}.
	\end{equation}
	Next, for each $k\in\N$, we decompose $\Omega_k$ into the four disjoint subsets
	\begin{align*}
		\Omega_k^1&:=\{c_k<-1/p\},&
		\Omega_k^2&:=\{-1/p\leq c_k\leq -1/2\},&\\
		\Omega_k^3&:=\{-1/2<c_k<1/2\},&
		\Omega_k^4&:=\{c_k\geq 1/2\}.&
	\end{align*}
	This allows us to rewrite the quotient in \eqref{eq:quotient_of_interest} 
	as $Q_k^1+Q_k^2+Q_k^3+Q_k^4$
	with
	\[
		Q_k^i
		:=
		\frac{
			\int_{\Omega_k^i}
				\bigl(|1+c_k(x)|^p-1-pc_k(x)\bigr)|\bar u(x)|^p\,\dx
		}{
			\left(\int_{\Omega_k}|c_k(x)|^s|\bar u(x)|^s\,\dx
				+\int_{\{\bar u=0\}}|h_k(x)|^s\,\dx\right)^{1/s}
		}
	\]
	for each $i=1,2,3,4$. By construction, $Q_k^1$ is nonnegative which yields
	$\liminf_{k\to\infty}Q_k^1\geq 0$. Furthermore, in case where $\Omega_k^2$ is of
	positive measure, we find
	\begin{align*}
		Q_k^2
		&\geq
		(p-2)\frac{\norm{\bar u}_{p,\Omega_k^2}^p}{\norm{\bar u}_{s,\Omega_k^2}}
		\geq
		(p-2)
		\frac{\norm{\bar u}_{s,\Omega_k^2} \norm{\abs{\bar u}^{p-1}}_{r,\Omega_k^2}
		}{
		\norm{\bar u}_{s,\Omega_k^2}}
		=
		(p-2)
		\norm{\abs{\bar u}^{p-1}}_{r,\Omega_k^2}
	\end{align*}
	from $\abs{\bar u}^{p-1}\chi_{\{\bar u\neq 0\}}\in L^r(\Omega)$ and H\"older's 
	inequality on $\Omega_k^2$.
	Since we have $\blambda(\Omega_k^2)\searrow 0$ from $\norm{h_k}_s\searrow 0$,
	$\norm{\abs{\bar u}^{p-1}}_{r,\Omega_k^2}\searrow 0$ holds which is why
	$\liminf_{k\to\infty}Q^2_k\geq 0$ follows.
	Next, let us investigate the setting where $\Omega_k^3$ is of positive measure.
	A second-order Taylor expansion of the mapping $y\mapsto (1+y)^p-1$ at the origin
	yields $(1+y)^p-1-py\geq -y^2$ for all $y\in(-1/2,1/2)$. 
	Thus, we obtain
	\begin{align*}
		Q_k^3
		\geq
		-\frac{\int_{\Omega^3_k}|c_k(x)|^2|\bar u(x)|^p\,\dx}{\norm{c_k\bar u}_{s,\Omega_k^3}}
		\geq
		-\frac{\norm{c_k\bar u}_{s,\Omega_k^3}\norm{c_k\abs{\bar u}^{p-1}}_{r,\Omega_k^3}}
		{\norm{c_k\bar u}_{s,\Omega_k^3}}
		=
		-
		\norm{c_k\abs{\bar u}^{p-1}}_{r,\Omega_k^3}
	\end{align*}	
	where we used H\"older's inequality on $\Omega_k^3$ and 
	$\abs{\bar u}^{p-1}\chi_{\{\bar u\neq 0\}}\in L^r(\Omega)$ which, by boundedness of
	$c_k$ on $\Omega_k^3$, guarantees $c_k\abs{\bar u}^{p-1}\chi_{\Omega_k^3}\in L^r(\Omega)$.
	Observing that $\bar u$ does not vanish on $\Omega_k^3$, that $h_k=c_k\bar u$ holds on
	$\Omega_k^3$, and that $\norm{h_k}_s\searrow 0$ is valid, we obtain the 
	pointwise convergence of $\{c_k\}_{k\in\N}$ to $0$ almost everywhere on $\Omega_k^3$. Thus,
	$c_k(x)|\bar u(x)|^{p-1}\to 0$ holds for almost every $x\in\Omega_k^3$.
	Noting that $\{c_k\abs{\bar u}^{p-1}\chi_{\Omega_k^3}\}_{k\in\N}$ is dominated by 
	$\abs{\bar u}^{p-1}\chi_{\Omega_k^3}\in L^r(\Omega)$,
	we find $\norm{c_k\abs{\bar u}^{p-1}}_{r,\Omega_k^3}\to 0$ from Lebesgue's dominated 
	convergence theorem, i.e., $\liminf_{k\to\infty}Q_k^3\geq 0$.
	Finally, we address the situation where $\Omega_k^4$ is of positive measure.
	Recalling that $h_k=c_k\bar u$,  $\bar u\neq 0$, and $c_k\geq 1/2$ hold on $\Omega_k^4$, 
	$\norm{h_k}_s\searrow 0$ implies $\blambda(\Omega_k^4)\searrow 0$.
	Exploiting $\abs{\bar u}^{p-1}\chi_{\Omega_k^4}\in L^r(\Omega)$, we find
	\begin{align*}
		Q_k^4
		&\geq 
		-p
		\frac{
			\int_{\Omega_k^4}c_k(x)|\bar u(x)|^p\,\dx
		}{
			\norm{c_k\bar u}_{s,\Omega_k^4}
		}
		\geq
		-p
		\frac{
			\norm{c_k\bar u}_{s,\Omega_k^4}\norm{\abs{\bar u}^{p-1}}_{r,\Omega_k^4}
		}{
			\norm{c_k\bar u}_{s,\Omega_k^4}
		}
		=
		-p\norm{\abs{\bar u}^{p-1}}_{r,\Omega_k^4}
	\end{align*}
	by applying H\"older's inequality on $\Omega_k^4$. Due to
	$\blambda(\Omega_k^4)\searrow 0$, we have $\norm{\abs{\bar u}^{p-1}}_{r,\Omega_k^4}\searrow 0$
	which yields $\liminf_{k\to\infty}Q_k^4\geq 0$.
	Combining all these estimates, \eqref{eq:special_eta_in_fsub} has been shown, i.e.,
	$\bar\eta\in\fsub q_{s,p}(\bar u)$ is valid.

	Let us show the converse statement. 
	Therefore, we assume that there is some $\eta\in\fsub q_{s,p}(\bar u)$.
	Due to \cref{lem:subderivative_by_derivative}, we know that $\eta=p \abs{\bar u}^{p-2}\bar u$
	holds almost everywhere on $\{\bar u\neq 0\}$.
	Thus, from $\eta\in L^r(\Omega)$, the condition 
	$\abs{\bar u}^{p-1}\chi_{\{\bar u\neq 0\}}\in L^r(\Omega)$ follows.
	We assume that $\bar u$ is not identically zero almost everywhere on $\Omega$.
	
	Suppose that $s=1$ holds. 
	We fix a set $\Omega'\subset\{\bar u\neq 0\}$ of positive measure and some
	$\rho>0$ such that $|\bar u(x)|\geq\rho$ holds almost everywhere on $\Omega'$.
	Set $A_\ell:=\{x\in\Omega'\,|\,0<|\bar u(x)|\leq \ell\}$ for each $\ell\in\N$.
	Then we have $\bigcup_{\ell\in\N} A_\ell=\Omega'$,
	and due to $\blambda(\Omega')>0$,
	there exists some $\ell_0\in\N$ such that $A_{\ell_0}$ is of positive measure.
	Let us now fix a sequence $\{\Omega_k'\}_{k\in\N}$ of measurable subsets of $A_{\ell_0}$ 
	which satisfy $\blambda(\Omega'_k)\searrow 0$.
	For brevity of notation, set $m_k:=\blambda(\Omega_k')$ for each $k\in\N$ and define
	$h_k:=m_k^{-1/2}\bar u\chi_{\Omega'_k}$.
	By construction, we have
	\[
		\norm{h_k}_1
		=
		m_k^{-1/2}\int_{\Omega'_k}|\bar u(x)|\,\mathrm dx
		\leq
		\ell_0\,m_k^{1/2}\searrow 0.
	\]
	Furthermore, we find
	\begin{equation}\label{eq:Frechet_q1p_empty_estimate}
		\begin{aligned}
		&\frac{q_{1,p}(\bar u+h_k)-q_{1,p}(\bar u)-\int_\Omega\eta(x)h_k(x)\,\mathrm dx}
			{\norm{h_k}_1}
		\\
		&\qquad
		=
		\frac{\int_{\Omega'_k}
			\bigl(\bigl(1+m_k^{-1/2}\bigr)^p|\bar u(x)|^p-|\bar u(x)|^p
				-pm_k^{-1/2}|\bar u(x)|^p\bigr)\,\mathrm dx
				}
			{m_k^{-1/2}\int_{\Omega'_k}|\bar u(x)|\,\mathrm dx}\\
		&\qquad
		=
		\frac{\bigl(1+m_k^{-1/2}\bigr)^p-1-pm_k^{-1/2}}{m_k^{-1/2}}
		\,
		\frac{\norm{\bar u}_{p,\Omega_k'}^p}{\norm{\bar u}_{1,\Omega_k'}}.
		\end{aligned}
	\end{equation}
	Due to $p\in(0,1)$, it holds
	\begin{align*}
		\frac{\bigl(1+m_k^{-1/2}\bigr)^p-1-pm_k^{-1/2}}{m_k^{-1/2}}
		=
		\left(m_k^{1/(2p)}+m_k^{1/(2p)-1/2}\right)^p
		-m_k^{1/2}-p
		\to 
		-p.
	\end{align*}
	On the other hand, we have
	\begin{align*}
		\frac{\norm{\bar u}_{p,\Omega_k'}^p}{\norm{\bar u}_{1,\Omega_k'}}
		\geq
		\frac{\rho^p}{\ell_0}
	\end{align*}
	by choice of $\Omega'_k\subset A_{\ell_0}$.
	Hence, for sufficiently large $k\in\N$, \eqref{eq:Frechet_q1p_empty_estimate} yields
	\begin{align*}
		\frac{q_{1,p}(\bar u+h_k)-q_{1,p}(\bar u)-\int_\Omega\eta(x)h_k(x)\,\mathrm dx}
			{\norm{h_k}_1}
		\leq
		-\frac{p}{2}\,\frac{\rho^p}{\ell_0}<0,
	\end{align*}
	but this contradicts $\eta\in\fsub q_{1,p}(\bar u)$.
\end{proof}

For $s>1$, fix a function $\bar u\in L^s(\Omega)$ such that $\{\bar u\neq 0\}$ possesses positive
measure and some sequence $\{\Omega_k\}_{k\in\N}$ of measurable subsets of $\{\bar u\neq 0\}$
possessing positive measure. Supposing that
$\abs{\bar u}^{p-1}\chi_{\{\bar u\neq 0\}}\in L^r(\Omega)$ is valid, 
H\"older's inequality on $\Omega_k$ yields
\begin{align*}
	\norm{\bar u}_{p,\Omega_k}^p
	\leq
	\norm{\bar u}_{s,\Omega_k}\norm{\abs{\bar u}^{p-1}}_{r,\Omega_k}
\end{align*}
for each $k\in\N$. Thus, we find
\begin{equation}\label{eq:s_SD_p>0}
	\blambda(\Omega_k)\searrow 0
	\qquad
	\Longrightarrow
	\qquad
	\frac{\norm{\bar u}_{p,\Omega_k}^p}{\norm{\bar u}_{s,\Omega_k}}
	\searrow 0,
\end{equation}
which can be interpreted as a reasonable adaptation of the $s$-SD property 
from \cref{def:slowly_decrasing_functions} to the setting
$p\in(0,1)$. Note that \eqref{eq:s_SD_p>0} can be used in the proof
of \cref{lem:nonemptiness_Frechet_qsp} in order to show $\liminf_{k\to\infty}Q_k^2\geq 0$.
However, as demonstrated above, \eqref{eq:s_SD_p>0} is implied by 
$\abs{\bar u}^{p-1}\chi_{\{\bar u\neq 0\}}\in L^r(\Omega)$ which, either way, needs to be
postulated in order to show the assertion of \cref{lem:nonemptiness_Frechet_qsp}. 
We can interpret $L^r$-regularity of $\abs{\bar u}^{p-1}\chi_{\{\bar u\neq 0\}}$ 
again as a condition which ensures that whenever $\bar u$ approaches
zero on $\{\bar u\neq 0\}$, then this has to happen slowly enough.
Recall that in case $p=0$, see \cref{lem:nonemptiness_Frechet_qs0}, 
$\abs{\bar u}^{-1}\chi_{\{\bar u\neq 0\}}\in L^r(\Omega)$ is only sufficient but not necessary
for the nonemptiness of $\fsub q_{s,0}(\bar u)$, see
\cref{lem:SD_by_hoelder} and \cref{ex:SD_not_Lr} as well.

Now, we are in position to fully characterize the Fr\'{e}chet subdifferential of $q_{s,p}$.
Again, we distinguish the cases $s=1$ and $s\in(1,\infty)$.

\begin{theorem}\label{thm:characterization_Frechet_q1p}
	We have
	\[
		\forall \bar u\in L^1(\Omega)\colon\quad
		\fsub q_{1,p}(\bar u)
		=
		\begin{cases}
			\{0\}	&	\text{if $\bar u=0$ a.e.\ on $\Omega$,}\\
			\varnothing		&	\text{otherwise}.
		\end{cases}
	\]
\end{theorem}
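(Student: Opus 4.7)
The plan is to dispatch the two cases of the formula separately, with the ``otherwise'' branch being an immediate corollary of the nonemptiness lemma just established and the equality branch requiring one additional perturbation argument analogous to the one used in the proof of \cref{thm:characterization_Frechet_q10}.

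First I would observe that if $\bar u$ does not vanish almost everywhere on $\Omega$, then \cref{lem:nonemptiness_Frechet_qsp} directly yields $\fsub q_{1,p}(\bar u) = \varnothing$: condition~\ref{item:nonemptiness_Frechet_qsp_trivial} fails by assumption and condition~\ref{item:nonemptiness_Frechet_qsp} is never available for $s=1$. This disposes of the nontrivial branch without further work. Next I would treat the case $\bar u = 0$ a.e.\ on $\Omega$. The inclusion $\{0\} \subset \fsub q_{1,p}(0)$ is already contained in the first part of the proof of \cref{lem:nonemptiness_Frechet_qsp}, since in the trivial case the distinguished candidate $\bar\eta = p\abs{\bar u}^{p-2}\bar u\chi_{\{\bar u\neq 0\}}$ equals zero and was shown there to belong to $\fsub q_{s,p}(\bar u)$. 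Hence only the reverse inclusion $\fsub q_{1,p}(0) \subset \{0\}$ remains.

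For this reverse inclusion I would argue by contradiction. Suppose $\eta \in \fsub q_{1,p}(0)$ is not identically zero. Then one can extract a measurable $\Omega'\subset\Omega$ of positive measure and some $\rho>0$ with $\abs{\eta(x)}\geq \rho$ a.e.\ on $\Omega'$. Choose a sequence $\{\Omega_k\}_{k\in\N}$ of measurable subsets of $\Omega'$ satisfying $\blambda(\Omega_k)\searrow 0$ and try the perturbation $h_k := c_k\chi_{\Omega_k}\sgn\eta$, with the amplitude $c_k>0$ still free. A direct computation gives
\begin{equation*}
	\frac{q_{1,p}(h_k) - \int_\Omega\eta(x)h_k(x)\,\dx}{\norm{h_k}_1}
	\leq
	\frac{c_k^p\blambda(\Omega_k) - \rho c_k\blambda(\Omega_k)}{c_k\blambda(\Omega_k)}
	=
	c_k^{p-1} - \rho.
\end{equation*}
Since $p-1<0$, the first term can be driven below $\rho/2$ by fixing $c_k$ large enough, while $\blambda(\Omega_k)$ can independently be chosen so small (e.g.\ $\blambda(\Omega_k)\leq (kc_k)^{-1}$) that $\norm{h_k}_1 = c_k\blambda(\Omega_k)\searrow 0$. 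The quotient is then eventually bounded above by $-\rho/2<0$, which contradicts $\eta\in\fsub q_{1,p}(0)$.

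The step requiring the most care is precisely the joint calibration of $c_k$ and $\blambda(\Omega_k)$. Because $y\mapsto\abs{y}^p$ has infinite slope at the origin for $p\in(0,1)$, the constant-amplitude perturbation used in the $p=0$ analog (\cref{thm:characterization_Frechet_q10}) cannot by itself produce a persistently negative Fr\'{e}chet quotient; the amplitude $c_k$ must diverge to compensate, which in turn forces $\blambda(\Omega_k)$ to shrink rapidly so that $\norm{h_k}_1\searrow 0$ is preserved. Once one recognizes that these two requirements are mutually compatible, the argument closes cleanly.
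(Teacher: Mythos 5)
Your proposal is correct and follows essentially the same route as the paper: the empty branch via \cref{lem:nonemptiness_Frechet_qsp}, the trivial inclusion $0\in\fsub q_{1,p}(0)$, and the reverse inclusion by perturbing with $c\,\chi_{\Omega_k}\sgn\eta$ on shrinking subsets of a set where $\abs{\eta}\geq\rho$. The only cosmetic difference is that the paper fixes a single constant amplitude $\alpha>\rho^{1/(p-1)}$ (so $\alpha^{p-1}<\rho$) for all $k$ --- your own estimate $c_k^{p-1}-\rho$ already shows no divergence of the amplitude is needed, contrary to your closing remark.
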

\begin{proof}
	Due to \cref{lem:nonemptiness_Frechet_qsp}, we know that $\fsub q_{1,p}(\bar u)$
	is empty for each $\bar u\in L^1(\Omega)\setminus\{0\}$.
	Thus, let us assume that $\bar u=0$ holds almost everywhere on $\Omega$.
	It is obvious by definition of the Fr\'{e}chet subdifferential that
	$0\in\fsub q_{1,p}(\bar u)$ is valid.
	In order to show the converse inclusion, fix $\eta\in\fsub q_{1,p}(\bar u)$
	and assume that $\eta$ does not vanish almost everywhere on $\Omega$.
	Then we find a measurable set $\Omega'\subset\Omega$ of positive measure 
	as well as some $\rho>0$ such that $|\eta(x)|\geq\rho$ holds for
	almost every $x\in\Omega'$. We fix a sequence $\{\Omega_k\}_{k\in\N}$ of measurable
	subsets of $\Omega'$ such that $\blambda(\Omega_k)\searrow 0$ is valid.
	Furthermore, we choose some constant $\alpha>\rho^{1/(p-1)}$. Due to $p\in(0,1)$,
	this yields $\alpha^{p-1}<\rho$.
	Now, we set $h_k:=\alpha\chi_{\Omega_k}\sgn\eta$ for each $k\in\N$ and observe that
	$\norm{h_k}_1\searrow 0$ is valid. Additionally, we find
	\begin{align*}
		\frac{q_{1,p}(h_k)-\int_\Omega\eta(x)h_k(x)\,\mathrm dx}{\norm{h_k}_1}
		&=
		\frac{\alpha^p\blambda(\Omega_k)-\alpha\int_{\Omega_k}|\eta(x)|\,\mathrm dx}
			{\alpha\blambda(\Omega_k)}\\
		&
		\leq
		\frac{\alpha^{p-1}\blambda(\Omega_k)-\rho\blambda(\Omega_k)}{\blambda(\Omega_k)}
		=
		\alpha^{p-1}-\rho
		<
		0,
	\end{align*}
	contradicting our assumption $\eta\in\fsub q_{1,p}(\bar u)$.
\end{proof}

\begin{theorem}\label{thm:characterization_Frechet_qsp}
	Fix $s\in(1,\infty)$.
	Then we have
	\[
		\forall \bar u\in L^s(\Omega)\colon\quad
		\fsub q_{s,p}(\bar u)
		=
			\{
				\eta\in L^r(\Omega)\,|\,
				\eta = p \abs{\bar u}^{p-2} \bar u
				\text{ a.e.\ on } \{\bar u\neq 0\}
			\}.
	\]
	In particular, the set on the right-hand side is empty if
	$\abs{\bar u}^{p-1}\chi_{\{\bar u\neq 0\}} \not\in L^r(\Omega)$.
\end{theorem}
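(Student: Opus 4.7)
The plan is to exploit \cref{lem:subderivative_by_derivative} and the proof of \cref{lem:nonemptiness_Frechet_qsp} and reduce the full statement to a single auxiliary fact about the ``$\{\bar u = 0\}$-piece'' of the subdifferential. The inclusion ``$\subset$'' is immediate from \cref{lem:subderivative_by_derivative}. For the ``In particular'' clause, note that any $\eta \in L^r(\Omega)$ satisfying $\eta = p \abs{\bar u}^{p-2}\bar u$ a.e.\ on $\{\bar u \neq 0\}$ necessarily fulfills $\abs{\bar u}^{p-1}\chi_{\{\bar u\neq 0\}}\in L^r(\Omega)$ (up to the constant $p$), so the right-hand side is empty precisely when this integrability fails, matching \cref{lem:nonemptiness_Frechet_qsp}.

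For the reverse inclusion ``$\supset$'', assume $\abs{\bar u}^{p-1}\chi_{\{\bar u \neq 0\}} \in L^r(\Omega)$ and let $\eta$ be a function on the right-hand side. Decompose $\eta = \bar\eta + \tilde\eta$ with $\bar\eta := p\abs{\bar u}^{p-2}\bar u\chi_{\{\bar u\neq 0\}}$ and $\tilde\eta := \eta\chi_{\{\bar u = 0\}}$; both belong to $L^r(\Omega)$. \Cref{lem:nonemptiness_Frechet_qsp} already shows $\bar\eta \in \fsub q_{s,p}(\bar u)$. Since
\begin{equation*}
	q_{s,p}(\bar u + h) - q_{s,p}(\bar u)
	= \int_{\{\bar u \neq 0\}}\parens*{\abs{\bar u + h}^p - \abs{\bar u}^p}\,\dx
	+ \int_{\{\bar u = 0\}}\abs{h}^p\,\dx,
\end{equation*}
and $\bar\eta$, $\tilde\eta$ have disjoint supports, the subgradient inequality for $\eta$ reduces to adding the term $-\int_{\{\bar u = 0\}}\tilde\eta(x)h_k(x)\,\dx$ to the quotient analyzed in \cref{lem:nonemptiness_Frechet_qsp}. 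Hence I only need to verify the auxiliary claim
\begin{equation*}
	\liminf_{k \to \infty}
	\frac{\int_{\{\bar u = 0\}}\parens*{\abs{h_k(x)}^p - \tilde\eta(x) h_k(x)}\,\dx}{\norm{h_k}_s}
	\ge 0
\end{equation*}
for every sequence $\{h_k\}_{k\in\N}\subset L^s(\Omega)$ with $\norm{h_k}_s \searrow 0$.

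To prove this auxiliary claim, I would split $\{\bar u = 0\} \cap \{h_k \neq 0\}$ according to whether the super-linear term $\abs{h_k}^p$ dominates the linear term or not. Set
\begin{equation*}
	A_k := \set*{x \in \{\bar u = 0\}\cap\{h_k \neq 0\} \given \abs{h_k(x)}^p \ge 2\abs{\tilde\eta(x) h_k(x)}}
\end{equation*}
and let $B_k$ be the complement within $\{\bar u = 0\}\cap\{h_k\neq 0\}$. On $A_k$ the integrand is bounded below by $\tfrac12 \abs{h_k}^p \ge 0$ and contributes nonnegatively. On $B_k$ one has $\abs{h_k}^{p-1} < 2\abs{\tilde\eta}$, equivalently $\abs{h_k} > (2\abs{\tilde\eta})^{-1/(1-p)}$, and H\"older's inequality yields
\begin{equation*}
	\abs*{\int_{B_k} \tilde\eta(x) h_k(x)\,\dx}
	\le \norm{\tilde\eta}_{r,B_k}\,\norm{h_k}_{s,B_k}.
\end{equation*}
Likewise $\int_{B_k}\abs{h_k}^p\,\dx \le \int_{B_k}\abs{\tilde\eta h_k}\,\dx$. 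Hence it suffices to show $\norm{\tilde\eta}_{r,B_k} \to 0$.

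The main obstacle is precisely this last convergence, which I would establish via a truncation argument: given $\varepsilon > 0$, pick $N$ so large that $\norm{\tilde\eta\chi_{\{\abs{\tilde\eta}>N\}}}_r < \varepsilon/2$; on $B_k \cap \{\abs{\tilde\eta}\le N\}$ the pointwise lower bound $\abs{h_k} > (2N)^{-1/(1-p)}$ together with $\norm{h_k}_s \searrow 0$ forces $\blambda(B_k \cap \{\abs{\tilde\eta}\le N\}) \le (2N)^{s/(1-p)}\norm{h_k}_s^s \to 0$, so $\norm{\tilde\eta}_{r, B_k \cap \{\abs{\tilde\eta}\le N\}} \le N\,\blambda(B_k \cap \{\abs{\tilde\eta}\le N\})^{1/r} \to 0$. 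Combining the two pieces gives $\norm{\tilde\eta}_{r,B_k} < \varepsilon$ eventually, which closes the argument; the remaining manipulations are standard bookkeeping.
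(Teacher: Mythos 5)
Your proposal is correct and follows essentially the same route as the paper: the inclusion ``$\subset$'' comes from \cref{lem:subderivative_by_derivative}, the term on $\{\bar u\neq 0\}$ is delegated to the proof of \cref{lem:nonemptiness_Frechet_qsp}, and the remaining term on $\{\bar u=0\}$ is handled by a threshold/truncation argument exploiting that $\abs{h_k}^p$ dominates $\tilde\eta h_k$ wherever $\abs{h_k}^{p-1}\gtrsim\abs{\tilde\eta}$ while the complementary set has vanishing measure. The only cosmetic difference is that you split the domain pointwise via the comparison $\abs{h_k}^p\ge 2\abs{\tilde\eta h_k}$ and then truncate $\tilde\eta$ at level $N$, whereas the paper truncates $\eta$ and $h_k$ at fixed, coupled levels $t$ and $t^{1/(p-1)}$; both versions are sound.
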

\begin{proof}
	The inclusion ``$\subset$'' follows from \cref{lem:subderivative_by_derivative}.
	For the reverse inclusion,
	let $\eta \in L^r(\Omega)$ with $\eta=p\abs{\bar u}^{p-2}\bar u$ almost everywhere on
	$\{\bar u\neq 0\}$ be given.
	We have to show 
	\begin{equation*}
		\liminf_{k \to \infty}
		\frac{
			q_{s,p}(\bar u + h_k) - q_{s,p}(\bar u) - \int_\Omega\eta(x)h_k(x)\,\mathrm dx
		}{
			\norm{h_k}_s
		}
		\ge 0
	\end{equation*}
	for all sequences $\seq{h_k}_{k \in \N} \subset L^s(\Omega)$
	with $\norm{h_k}_s \searrow 0$.
	For such a sequence, we set
	\begin{align*}
		D_k
		&:=
		\frac{
			q_{s,p}(\bar u + h_k) - q_{s,p}(\bar u) - \int_\Omega\eta(x)h_k(x)\,\mathrm dx
		}{
			\norm{h_k}_s
		}
		\\&
		=
		\frac{
			\int_{\{\bar u= 0\}} \bigl(\abs{h_k(x)}^p - \eta(x) h_k(x)\bigr) \, \dx
		}{
			\norm{h_k}_s
		}\\
		&\qquad
		+
		\frac{
			\int_{\{\bar u\neq 0\}} 
				\bigl(\abs{\bar u(x) + h_k(x)}^p 
				- \abs{\bar u(x)}^p-p\abs{\bar u(x)}^{p-2}\bar u(x)h_k(x)
				\bigr) \, \dx
		}{
			\norm{h_k}_s
		}
		\\&
		=:
		D_k^1 + D_k^2.
	\end{align*}
	First, let us validate $\liminf_{k \to \infty} D_k^1 \ge 0$.
	In case where $\eta$ equals zero almost everywhere on $\{\bar u=0\}$, this is obvious.
	Otherwise, for some arbitrarily chosen $\varepsilon>0$, choose $t>0$ large enough such that
	$\norm{\eta\chi_{\{|\eta|>t\}}}_{r,\{\bar u=0\}}\leq\varepsilon$.
	Next, for each $k\in\N$, we define $h_k^1,h_k^2\in L^s(\Omega)$ by means of
	$h_k^1:=h_k\chi_{\{|h_k|\leq t^{1/(p-1)}\}}$ and $h_k^2:=h_k\chi_{\{|h_k|>t^{1/(p-1)}\}}$.
	By construction, we find
	\begin{equation}\label{eq:decomposition_for_Dk1}
		\begin{aligned}
		D_k^1
		=
		\frac{
			\int_{\{\bar u=0\}}\bigl(|h_k^1(x)|^p - \eta(x) h_k^1(x)\bigr) \, \dx
		}{
			\norm{h_k}_s
		}
		+
		\frac{
			\int_{\{\bar u=0\}}\bigl(|h_k^2(x)|^p - \eta(x) h_k^2(x)\bigr) \, \dx
		}{
			\norm{h_k}_s
		}.
		\end{aligned}
	\end{equation}
	Observing that for all $x\in\{\bar u=0\}\cap\{|h_k|\leq t^{1/(p-1)}\}\cap\{|\eta|\leq t\}$,
	we have the estimate $|h_k(x)|^{p-1}\geq t\geq|\eta(x)|$, i.e., 
	$|h_k(x)|^p\geq|\eta(x)h_k(x)|\geq\eta(x)h_k(x)$, it holds
	\begin{align*}
		\frac{
			\int_{\{\bar u=0\}}\bigl(|h_k^1(x)|^p - \eta(x) h_k^1(x)\bigr) \, \dx
		}{
			\norm{h_k}_s
		}
		&\geq 
		-
		\frac{
			\int_{\{\bar u=0\}}|\eta(x)\chi_{\{|\eta|>t\}}(x)h_k^1(x)|\,\dx
		}{
			\norm{h_k^1}_{s,\{\bar u=0\}}
		}\\
		&
		\geq
		-\norm{\eta\chi_{\{|\eta|>t\}}}_{r,\{\bar u=0\}}
		\geq 
		-\varepsilon
	\end{align*}
	where H\"older's inequality on $\{\bar u=0\}$ was used to obtain the last but one estimate.
	On the other hand, we find
	\begin{align*}
		\frac{
			\int_{\{\bar u=0\}}\bigl(|h_k^2(x)|^p - \eta(x) h_k^2(x)\bigr) \, \dx
		}{
			\norm{h_k}_s
		}
		&\geq
		-
		\frac{
			\int_{\{\bar u=0\}}|\eta(x)h_k^2(x)|\,\mathrm dx
		}{
			\norm{h_k^2}_{s,\{\bar u=0\}}
		}
		\geq
		-
		\norm{\eta\chi_{\{|h_k|>t^{1/(p-1)}\}}}_{r,\{\bar u=0\}}
	\end{align*}
	again from H\"older's inequality on $\{\bar u=0\}$.
	As a consequence, \eqref{eq:decomposition_for_Dk1} yields the estimate
	$D_k^1\geq -\varepsilon-\norm{\eta\chi_{\{|h_k|>t^{1/(p-1)}\}}}_{r,\{\bar u=0\}}$
	for all $k\in\N$.
	Since we have $\norm{h_k}_s\searrow 0$, the convergence
	$\blambda(\{|h_k|>t^{1/(p-1)}\})\to 0$ holds which guarantees
	$\norm{\eta\chi_{\{|h_k|>t^{1/(p-1)}\}}}_{r,\{\bar u=0\}}\to 0$.
	Observing that $\varepsilon>0$ is independent of $k$ and can be made arbitrarily small, 
	we have shown
	$\liminf_{k\to\infty}D_k^1\geq 0$.
	
	Noting that we can distill $\liminf_{k\to\infty}D_k^2\geq 0$ from the proof of
	\cref{lem:nonemptiness_Frechet_qsp},  this already yields
	$\liminf_{k\to\infty}D_k\geq 0$ and the statement of the theorem has been shown.
\end{proof}

In the subsequent remark, which parallels \cref{rem:Frechet_q10_vs_PMP,rem:Frechet_qs0_minimization},
we comment on necessary optimality conditions for unconstrained optimization problems involving
the functional $q_{s,p}$ as a sparsity-promoting term.
\begin{remark}\label{rem:Frechet_qsp_minimization}
	Fix $s\in[1,\infty)$, a Fr\'{e}chet differentiable function $f\colon L^s(\Omega)\to\R$, and
	consider the unconstrained minimization of $f+q_{s,p}$. A necessary condition for some
	$\bar u\in L^s(\Omega)$ to be a local minimizer of $f+q_{s,p}$ is given by 
	$-f'(\bar u)\in\fsub q_{s,p}(\bar u)$.
	\begin{enumerate}
		\item In case $s=1$, \cref{thm:characterization_Frechet_q1p} shows that this amounts to
			$\bar u=0$ and $f'(\bar u)=0$ almost everywhere on $\Omega$. A similar result can be
			obtained applying Pontryagin's maximum principle, 
			see \cite[Theorem~2.2]{ItoKunisch2014} or \cite[Section~2.1]{NatemeyerWachsmuth2020}.
		\item In case $s>1$, \cref{thm:characterization_Frechet_qsp} implies that 
			$f'(\bar u)\in L^r(\Omega)$ has to equal $p \abs{\bar u}^{p-2}\bar u$ almost everywhere
			on $\{\bar u\neq 0\}$.  
			This implicitly demands $\abs{\bar u}^{p-1}\chi_{\{\bar u\neq 0\}}\in L^r(\Omega)$ which
			promotes sparse controls since $\bar u$ has to approach zero from $\{\bar u\neq 0\}$
			if at all slowly enough.
	\end{enumerate}
\end{remark}

\subsection{Limiting subdifferential}

Let us now turn our attention to the limiting subdifferential constructions in
the Aspund space setting $s\in(1,\infty)$. 
Thanks to \cref{thm:characterization_Frechet_qsp}, we can adapt most
of the proof strategies directly from \cref{sec:limiting_p=0}.

\begin{theorem}\label{thm:characterization_limiting_qsp}
	Fix $s\in(1,\infty)$. Then we have
	\[
		\forall\bar u\in L^s(\Omega)\colon\quad
		\partial q_{s,p}(\bar u)
		=
		\fsub q_{s,p}(\bar u)
		=
		\{
			\eta\in L^r(\Omega)\,|\,
			\eta=p \abs{\bar u}^{p-2}\bar u
			\text{ a.e.\ on }\{\bar u\neq 0\}
		\}.
	\]
\end{theorem}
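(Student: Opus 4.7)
The plan is to prove the two inclusions $\fsub q_{s,p}(\bar u) \subset \partial q_{s,p}(\bar u)$ and $\partial q_{s,p}(\bar u) \subset \fsub q_{s,p}(\bar u)$ separately; the final equality with the explicit set then comes from \cref{thm:characterization_Frechet_qsp}. The first inclusion is immediate from the definition of the limiting subdifferential by choosing the constant witnessing sequences $u_k \equiv \bar u$ and $\eta_k \equiv \eta$; note that the required convergence $q_{s,p}(u_k)\to q_{s,p}(\bar u)$ is trivial, and more generally any convergent sequence $u_k\to\bar u$ in $L^s(\Omega)$ automatically satisfies $q_{s,p}(u_k)\to q_{s,p}(\bar u)$ by \cref{lem:uniform_continuity_of_qsp}.

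For the non-trivial inclusion $\partial q_{s,p}(\bar u)\subset\fsub q_{s,p}(\bar u)$, I would fix $\eta\in\partial q_{s,p}(\bar u)$ and pick witnessing sequences $\seq{u_k}_{k\in\N}\subset L^s(\Omega)$ and $\seq{\eta_k}_{k\in\N}\subset L^r(\Omega)$ with $u_k\to\bar u$ in $L^s(\Omega)$, $\eta_k\weakly\eta$ in $L^r(\Omega)$, and $\eta_k\in\fsub q_{s,p}(u_k)$ for every $k\in\N$. By \cref{thm:characterization_Frechet_qsp}, each $\eta_k$ satisfies $\eta_k = p\abs{u_k}^{p-2}u_k$ almost everywhere on $\set{u_k\neq 0}$. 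Passing to a subsequence (without relabeling), I may assume $u_k\to\bar u$ pointwise almost everywhere on $\Omega$. Set $\tilde\eta := p\abs{\bar u}^{p-2}\bar u\,\chi_{\set{\bar u\neq 0}}$; for almost every $x\in\set{\bar u\neq 0}$, we have $u_k(x)\neq 0$ eventually, and the continuity of $y\mapsto p\abs{y}^{p-2}y$ away from $0$ gives $\eta_k(x)\to\tilde\eta(x)$.

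The main obstacle is to upgrade this pointwise almost everywhere convergence on $\set{\bar u\neq 0}$ to the identification $\eta = \tilde\eta$ almost everywhere on that set, since weak $L^r$-convergence and pointwise convergence need a bridging argument. The natural device is Egorov's theorem applied on the exhausting level sets $E_n := \set{\abs{\bar u}>1/n}$, which have finite measure. On each $E_n$, I would choose, for arbitrarily small $\varepsilon > 0$, a measurable subset $A\subset E_n$ with $\blambda(E_n\setminus A)<\varepsilon$ on which the pointwise convergence $u_k\to\bar u$ is uniform; then $\abs{u_k}\ge 1/(2n)$ uniformly on $A$ for large $k$, which makes the sequence $\seq{\eta_k\chi_A}_{k\in\N}$ uniformly bounded in $L^\infty$ by $p(2n)^{1-p}$ and uniformly convergent to $\tilde\eta$ on $A$. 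Dominated convergence then yields $\eta_k\chi_A\to \tilde\eta\chi_A$ strongly in $L^r(\Omega)$; since $\eta_k\chi_A\weakly\eta\chi_A$ in $L^r(\Omega)$ as well, the limits must coincide, giving $\eta = \tilde\eta$ almost everywhere on $A$.

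Letting $\varepsilon\searrow 0$ and $n\to\infty$ produces $\eta = \tilde\eta = p\abs{\bar u}^{p-2}\bar u$ almost everywhere on $\set{\bar u\neq 0}$. Since $\eta\in L^r(\Omega)$ by assumption, this forces $\abs{\bar u}^{p-1}\chi_{\set{\bar u\neq 0}}\in L^r(\Omega)$, and then \cref{thm:characterization_Frechet_qsp} delivers $\eta\in\fsub q_{s,p}(\bar u)$, completing the reverse inclusion.
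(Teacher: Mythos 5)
Your proposal is correct and follows essentially the same route as the paper: the inclusion $\fsub q_{s,p}(\bar u)\subset\partial q_{s,p}(\bar u)$ via constant witnessing sequences, and the reverse inclusion by passing to a pointwise a.e.\ convergent subsequence, using \cref{thm:characterization_Frechet_qsp} to identify $\eta_k=p\abs{u_k}^{p-2}u_k$ on $\set{u_k\neq 0}$, and matching the pointwise limit with the weak limit (the paper transfers this verbatim from the proof of \cref{thm:characterization_limiting_qs0}). Your Egorov argument on the level sets $\set{\abs{\bar u}>1/n}$ merely makes explicit the bridging step that the paper leaves implicit, and it is sound.
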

\begin{proof}
	The second ``$=$'' follows from \cref{thm:characterization_Frechet_qsp}
	and it remain to verify the first equality.
	The inclusion ``$\supset$'' follows
	from the definition of the limiting subdifferential.

	The proof of the converse inclusion ``$\subset$'' can be directly transferred from
	the one of \cref{thm:characterization_limiting_qs0} exploiting the different pointwise
	characterization of the Fr\'{e}chet subdifferential from \cref{thm:characterization_Frechet_qsp}.
\end{proof}

Next, we characterize the singular subdifferential of $q_{s,p}$.
\begin{theorem}\label{thm:characterization_singular_qsp}
	Fix $s\in(1,\infty)$. Then we have
	\[
		\forall\bar u\in L^s(\Omega)\colon\quad
		\partial^\infty q_{s,p}(\bar u)
		=
		\{
			\eta\in L^r(\Omega)\,|\,\{\eta\neq 0\}\subset \{\bar u= 0\}
		\}.
	\]
\end{theorem}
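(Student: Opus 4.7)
The plan is to establish the two inclusions separately, using as the main tool the pointwise identity $\eta_k = p\abs{u_k}^{p-2}u_k$ on $\{u_k \neq 0\}$ that characterizes every element $\eta_k \in \fsub q_{s,p}(u_k)$, according to \cref{thm:characterization_Frechet_qsp}. The overall strategy mirrors the one employed for the analogous result \cref{thm:characterization_singular_qs0} in the case $p = 0$, modified by the fact that the Fr\'echet subgradient is now rigidly prescribed on $\{u_k \ne 0\}$ rather than merely required to vanish there.

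For the inclusion ``$\supset$'', given $\eta \in L^r(\Omega)$ with $\{\eta \neq 0\} \subset \{\bar u = 0\}$, I would reuse the approximation from the proof of \cref{thm:characterization_limiting_qsp}: set $\Omega_k := \{\abs{\bar u} \geq 1/k\}$, $u_k := \bar u\chi_{\Omega_k}$, and $t_k := 1/k$, so that $u_k \to \bar u$ in $L^s(\Omega)$ and $q_{s,p}(u_k) \to q_{s,p}(\bar u)$ by dominated convergence. Then I would put
\begin{equation*}
	\eta_k := p\abs{u_k}^{p-2}u_k\,\chi_{\Omega_k} + k\,\eta\,\chi_{\Omega \setminus \Omega_k},
\end{equation*}
which lies in $L^r(\Omega)$ since $\abs{u_k} \geq 1/k$ on $\Omega_k$ bounds the first summand in $L^\infty$ and $\eta \in L^r(\Omega)$; moreover, $\eta_k \in \fsub q_{s,p}(u_k)$ by \cref{thm:characterization_Frechet_qsp}. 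Exploiting that $\{\eta \neq 0\} \subset \{\bar u = 0\} \subset \Omega \setminus \Omega_k$, one obtains $t_k\eta_k = k^{-1}p\abs{u_k}^{p-2}u_k\chi_{\Omega_k} + \eta$, and the first summand has $L^\infty$-norm $O(k^{-p})$, so $t_k\eta_k \to \eta$ strongly in $L^r(\Omega)$.

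For the inclusion ``$\subset$'', I would fix $\eta \in \partial^\infty q_{s,p}(\bar u)$ together with associated sequences $u_k \to \bar u$ in $L^s(\Omega)$, $t_k \searrow 0$, and $\eta_k \in \fsub q_{s,p}(u_k)$ with $t_k\eta_k \weakly \eta$, and pass to a subsequence on which $u_k \to \bar u$ pointwise a.e. For fixed $n \in \N$ and $\varepsilon > 0$, Egorov's theorem furnishes a measurable set $F \subset A_n := \{\abs{\bar u} \geq 1/n\}$ with $\blambda(F) < \varepsilon$ such that $u_k \to \bar u$ uniformly on $A_n \setminus F$; thus, for sufficiently large $k$, $\abs{u_k} \geq 1/(2n)$ on $A_n \setminus F$, and \cref{thm:characterization_Frechet_qsp} yields $\abs{\eta_k} = p\abs{u_k}^{p-1} \leq p(2n)^{1-p}$ there, so $\norm{t_k\eta_k}_{r, A_n \setminus F} \to 0$. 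Testing the weak convergence against $\chi_{A_n \setminus F}\sgn \eta \in L^s(\Omega)$ forces $\int_{A_n \setminus F}\abs{\eta}\,\dx = 0$; sending $\varepsilon \searrow 0$, then $n \to \infty$, and using $\bigcup_{n \in \N} A_n = \{\bar u \neq 0\}$ delivers $\eta = 0$ a.e.\ on $\{\bar u \neq 0\}$.

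The main obstacle will be the converse direction, where the behavior of $\eta_k$ on $\{u_k = 0\}$ is entirely unconstrained by \cref{thm:characterization_Frechet_qsp} and therefore inaccessible via pointwise arguments. The Egorov localization to subsets of $\{\bar u \neq 0\}$ on which $\bar u$ is bounded away from zero is the decisive device: it converts the pointwise lower bound on $\abs{u_k}$ into a uniform $L^\infty$-bound on $\abs{\eta_k}$, which the vanishing factor $t_k$ then annihilates.
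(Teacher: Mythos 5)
Your proposal is correct and follows essentially the same route as the paper: the same approximating sequence $u_k=\bar u\chi_{\{\abs{\bar u}\ge 1/k\}}$ with $\eta_k$ prescribed by \cref{thm:characterization_Frechet_qsp} on $\Omega_k$ and equal to $k\eta$ elsewhere for ``$\supset$'', and the pointwise identification of $\eta_k$ on $\{u_k\neq 0\}$ combined with the weak convergence of $t_k\eta_k$ for ``$\subset$''. Your Egorov argument merely makes explicit the step the paper states tersely (that a.e.\ convergence of $t_k\eta_k$ to $0$ on $\{\bar u\neq 0\}$ forces the weak limit to vanish there), so there is nothing substantive to add.
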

\begin{proof}
	Fix $\bar u\in L^s(\Omega)$. 
	Observe that in case where $\bar u$ vanishes almost everywhere on $\Omega$,
	we have $\fsub q_{s,p}(\bar u)=L^r(\Omega)$ from
	\cref{thm:characterization_Frechet_qsp} which, by definition of the
	singular subdifferential, already yields $\partial^\infty q_{s,p}(\bar u)=L^r(\Omega)$.
	Thus, we may assume throughout the remainder of the proof that $\{\bar u\neq 0\}$
	possesses positive measure.
	
	In order to prove the inclusion ``$\supset$'', we fix $\eta\in L^r(\Omega)$ which satisfies
	$\{\eta\neq 0\}\subset\{\bar u=0\}$.
	We set $\Omega_k:=\{\abs{\bar u}\geq 1/k\}$ as well as $u_k:=\bar u\chi_{\Omega_k}$
	for each $k\in\N$ leading to $u_k\to \bar u$ in $L^s(\Omega)$ and $q_{s,p}(u_k)\to q_{s,p}(\bar u)$,
	see \cref{lem:uniform_continuity_of_qsp}.
	For each $k\in\N$, let us define a measurable function $\eta_k\colon\Omega\to\R$ by means of
	\[
		\forall x\in\Omega\colon\quad
		\eta_k(x)
		:=
		\begin{cases}
			p|\bar u(x)|^{p-2}\bar u(x)	&\text{if $x\in\Omega_k$,}\\
			k\,\eta(x)			&\text{otherwise.}
		\end{cases}
	\]
	For each $k\in\N$, we find
	\begin{align*}
		\norm{\eta_k}_r^r
		&=
		\int_{\{\abs{\bar u}\geq 1/k\}}p^r|\bar u(x)|^{(p-1)r}\,\dx
		+
		\int_{\{\abs{\bar u}<1/k\}}k^r|\eta(x)|^r\,\dx\\
		&
		\leq
		p^rk^{(1-p)r}\blambda(\{\bar u\neq 0\})+k^r\norm{\eta}_r^r
		<
		\infty,
	\end{align*}
	i.e., $\{\eta_k\}_{k\in\N}\subset L^r(\Omega)$.
	Furthermore, $\eta_k\in\fsub q_{s,p}(u_k)$ follows from \cref{thm:characterization_Frechet_qsp}
	since $\{u_k\neq 0\}=\Omega_k$ is valid for each $k\in\N$.
	Noting that $\eta$ vanishes on $\Omega_k$, we obtain
	\begin{align*}
		\norm{\tfrac1{k}\eta_k-\eta}_r^r
		&=
		\frac{p^r}{k^{r}}\int_{\Omega_k}|\bar u(x)|^{(p-1)r}\,\dx
		\leq 
		\frac{p^r}{k^{r}}\int_{\Omega_k}k^{(1-p)r}\,\dx
		\leq
		\frac{p^r\,\blambda(\{\bar u\neq 0\})}{k^{pr}}
	\end{align*}
	for each $k\in\N$, and this shows $\tfrac1k\eta_k\to\eta$ in $L^r(\Omega)$.
	Particularly, we find $\eta\in\partial^\infty q_{s,p}(\bar u)$ by definition of the
	singular subdifferential.
	
	In order to prove the inclusion ``$\subset$'', let us fix $\eta\in\partial^\infty q_{s,p}(\bar u)$.
	By definition of the singular subdifferential, we find sequences 
	$\{u_k\}_{k\in\N}\subset L^s(\Omega)$, $\{\eta_k\}_{k\in\N}\subset L^r(\Omega)$, and
	$\{t_k\}_{k\in\N}\subset(0,\infty)$ such that $u_k\to\bar u$ in $L^s(\Omega)$,
	$t_k\searrow 0$, and $t_k\eta_k\weakly\eta$ in $L^r(\Omega)$ hold while
	$\eta_k\in\fsub q_{s,p}(u_k)$ is valid for each $k\in\N$.
	Along a subsequence (without relabeling), $\{u_k\}_{k\in\N}$ converges pointwise almost
	everywhere to $\bar u$. Thus, for almost all $x\in\{\bar u\neq 0\}$,
	we find $u_k(x)\to\bar u(x)\neq 0$, i.e., $x\in\{u_k\neq 0\}$ and, due to
	\cref{thm:characterization_Frechet_qsp}, $\eta_k(x)=p|u_k(x)|^{p-2}u_k(x)$ for sufficiently
	large $k\in\N$. Thus, for almost every $x\in\{\bar u\neq 0\}$, we have
	$t_k\,\eta_k(x)\to 0$. Thus, the weak convergence $t_k\eta_k\weakly\eta$ in
	$L^r(\Omega)$ ensures that $\eta$ needs to vanish almost everywhere on $\{\bar u\neq 0\}$,
	i.e., $\{\eta\neq 0\}\subset\{\bar u=0\}$.
\end{proof}

We would like to focus the reader's attention to the fact that the limiting subdifferential
$\partial q_{s,p}(\bar u)$ might be empty for some $\bar u\in L^s(\Omega)$ where $\{\bar u\neq 0\}$
possesses positive measure while $\abs{\bar u}^{p-1}\chi_{\{\bar u\neq 0\}}$ lacks of $L^r$-regularity.
In contrast, the singular subdifferential $\partial^\infty q_{s,p}(\bar u)$ has been shown
to be never empty.

We close the section by showing that $q_{s,p}$ is nowhere Lipschitz continuous.
\begin{corollary}\label{cor:Lipschitzness_qsp}
	For $s\in(1,\infty)$, $q_{s,p}$ is nowhere Lipschitz continuous. 
\end{corollary}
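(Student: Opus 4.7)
The plan is to mirror the proof of \cref{cor:Lipschitzness_qs0} by splitting into two cases according to whether $\blambda(\{\bar u = 0\}) > 0$ and applying \cref{lem:Lipschitzness_s>1} in each.

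First, if $\blambda(\{\bar u = 0\}) > 0$, then \cref{thm:characterization_singular_qsp} immediately yields nonzero elements of $\partial^\infty q_{s,p}(\bar u)$ (take, for example, $\chi_A$ for any $A \subset \{\bar u = 0\}$ of positive measure). Hence \cref{lem:Lipschitzness_s>1} rules out Lipschitz continuity at $\bar u$, since the condition $\partial^\infty q_{s,p}(\bar u) = \{0\}$ fails.

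The substantial case is $\bar u \neq 0$ almost everywhere on $\Omega$. Following the construction in \cref{cor:Lipschitzness_qs0}, I pick $\alpha > 0$ with $\blambda(\{\abs{\bar u} \geq \alpha\}) > 0$, set $\Omega_k := \{\abs{\bar u} \geq \alpha/k\}$, and choose measurable $\Omega_k' \subset \{\abs{\bar u} \geq \alpha\}$ with $\blambda(\Omega_k') \searrow 0$. Defining $u_k := \bar u \chi_{\Omega_k \setminus \Omega_k'}$, I get $u_k \to \bar u$ in $L^s(\Omega)$ by dominated convergence and $q_{s,p}(u_k) \to q_{s,p}(\bar u)$ by \cref{lem:uniform_continuity_of_qsp}. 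The goal is then to exhibit multipliers $\eta_k \in \fsub q_{s,p}(u_k)$ and scalars $t_k \searrow 0$ such that $t_k \eta_k \weakly 0$ in $L^r(\Omega)$ but $t_k \eta_k \not\to 0$ strongly, violating condition~\ref{item:SNEC} of \cref{lem:Lipschitzness_s>1}.

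The main departure from \cref{cor:Lipschitzness_qs0} lies in the choice of $\eta_k$. In the $p = 0$ case, $\fsub q_{s,0}(u_k)$ is a cone containing every $L^r$-function supported in $\{u_k = 0\}$, so one could simply scale $\chi_{\Omega_k'}$. For $p \in (0,1)$, \cref{thm:characterization_Frechet_qsp} instead pins $\eta_k$ down to $p\abs{u_k}^{p-2} u_k$ on $\{u_k \neq 0\}$, leaving it free only on $\Omega_k'$. I therefore split $\eta_k := \eta_k^1 + \eta_k^2$ with $\eta_k^1 := p\abs{u_k}^{p-2} u_k \chi_{\Omega_k \setminus \Omega_k'}$ (the forced part) and $\eta_k^2 := t_k^{-1} \blambda(\Omega_k')^{-1/r} \chi_{\Omega_k'}$ (the free part). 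Since $\abs{u_k} \geq \alpha/k$ on its support, a crude bound gives $\norm{\eta_k^1}_r \leq p(k/\alpha)^{1-p}\blambda(\Omega)^{1/r} = O(k^{1-p})$, so choosing $t_k \searrow 0$ with $t_k\, k^{1-p} \to 0$, for instance $t_k := k^{p-2}$, forces $t_k \eta_k^1 \to 0$ strongly in $L^r(\Omega)$. Meanwhile $t_k \eta_k^2 = \blambda(\Omega_k')^{-1/r}\chi_{\Omega_k'}$ has unit $L^r$-norm and converges weakly but not strongly to zero, by exactly the H\"older argument used in the proof of \cref{cor:Lipschitzness_qs0}. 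Combining, $t_k \eta_k \weakly 0$ while $\norm{t_k \eta_k}_r \to 1$, contradicting~\ref{item:SNEC}. The main obstacle is this balancing act: $t_k$ must be small enough to neutralize the (potentially large in norm) forced part $\eta_k^1$, while the free part $\eta_k^2$ is simultaneously amplified by the reciprocal factor $t_k^{-1}$ so that it retains unit norm after scaling.
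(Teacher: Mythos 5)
Your proposal is correct and follows essentially the same route as the paper: the same case split via \cref{lem:Lipschitzness_s>1} and \cref{thm:characterization_singular_qsp}, and in the main case the same sets $\Omega_k$, $\Omega_k'$, the same $u_k$, and the same subgradient consisting of the forced part $p\abs{u_k}^{p-2}u_k$ plus a normalized bump on $\Omega_k'$ (the paper writes this as $\eta_k\in\tfrac1k\fsub q_{s,p}(u_k)$ with $t_k=1/k$, whereas you pre-amplify the free part by $t_k^{-1}$ with $t_k=k^{p-2}$ --- the same balancing act up to relabeling).
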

\begin{proof}
	For large parts, the proof parallels the one of \cref{cor:Lipschitzness_qs0}.
	Again, the situation is easy whenever $\bar u\in L^s(\Omega)$ satisfies
	$\blambda(\{\bar u=0\})>0$ due to 
	\cref{lem:Lipschitzness_s>1} and \cref{thm:characterization_singular_qsp}.
	Thus, we assume that $\{\bar u=0\}$ is of measure zero and show that $q_{s,p}$
	fails to satisfy the condition from \cref{lem:Lipschitzness_s>1}~\ref{item:SNEC} at $\bar u$.
	Therefore, we first choose $\alpha>0$ such that $\{\abs{\bar u}\geq\alpha\}$
	is of positive measure, define $\Omega_k:=\{\abs{\bar u}\geq\alpha/k\}$
	for each $k\in\N$, and pick a subset $\Omega_k'\subset\Omega_k$ of positive measure
	for each $k\in\N$ such that $\blambda(\Omega_k')\searrow 0$ holds.
	For each $k\in\N$, we define $u_k:=\bar u\chi_{\Omega_k\setminus\Omega_k'}$.
	We find
	$u_k\to\bar u$ in $L^s(\Omega)$, and
	\cref{lem:uniform_continuity_of_qsp} guarantees $q_{s,p}(u_k)\to q_{s,p}(\bar u)$.
	For each $k\in\N$, we define $\eta_k\in L^r(\Omega)$ by means of
	\[
		\forall x\in\Omega \colon\quad
		\eta_k(x)
		:=
		\begin{cases}
			\tfrac pk|\bar u(x)|^{p-2}\bar u(x)	&	\text{if $x\in\Omega_k\setminus\Omega_k'$,}\\
			\blambda(\Omega_k')^{-1/r}			&	\text{if $x\in\Omega_k'$,}\\
			0									&	\text{otherwise.}
		\end{cases}
	\]
	By construction, we have $\norm{\eta_k}_r\geq 1$ for each $k\in\N$.
	On the other hand, for each $h\in L^s(\Omega)$ and $k\in\N$, we find
	\begin{align*}
		\left|\int_\Omega \eta_k(x)h(x)\,\dx\right|
		&\leq
		\frac{p}{k}\int_{\Omega_k\setminus\Omega_k'}|\bar u(x)|^{(p-2)}\bar u(x)h(x)\,\dx
		+
		\norm{h}_{s,\Omega_k'}\\
		&\leq
		\frac{p}{\alpha^{1-p}k^p}\blambda(\{\bar u\neq 0\})^{1/r}
		\norm{h}_{s,\{\bar u\neq 0\}}
		+
		\norm{h}_{s,\Omega_k'}
	\end{align*}
	from H\"older's inequality on $\Omega_k\setminus\Omega_k'$ and $\Omega_k'$,
	respectively,
	and this yields $\eta_k\weakly 0$ in $L^r(\Omega)$.
	Due to $\eta_k\in\tfrac1k\fsub q_{s,p}(u_k)$ for each $k\in\N$, see
	\cref{thm:characterization_Frechet_qsp}, this shows that
	$q_{s,p}$ cannot be Lipschitz continuous at $\bar u$,
	see \cref{lem:Lipschitzness_s>1}.
\end{proof}

\section{Concluding remarks}\label{sec:concluding_remarks}

In this paper, we derived exact formulas for the Fr\'{e}chet, limiting, and singular subdifferential
of the functional $q_{s,p}$ 
defined in \eqref{eq:definition_of_sparsity_functional_p>0} and
\eqref{eq:definition_of_sparsity_functional_p=0}.
As \cref{rem:Frechet_q10_vs_PMP,rem:Frechet_qs0_minimization,rem:Frechet_qsp_minimization} underline,
the formulas for the Fr\'{e}chet subdifferential can be used in order to derive necessary optimality
conditions for the unconstrained minimization of functions $f+q_{s,p}$ where $f\colon L^s(\Omega)\to\R$
is Fr\'{e}chet differentiable. Let us now assume that $f+q_{s,p}$ has to be minimized 
with respect to some
constraint set $U_\textup{ad}\subset L^s(\Omega)$. Then Fermat's rule yields
validity of $0\in\fsub(f+q_{s,q}+\delta_{U_\textup{ad}})(\bar u)$ for each associated local
minimizer $\bar u\in L^s(\Omega)$ of the problem where 
the so-called indicator function 
$\delta_{U_\textup{ad}}\colon L^s(\Omega)\to\R\cup\{\infty\}$ of $U_\textup{ad}$
equals $0$ on $U_{\textup{ad}}$ and is set to $\infty$, otherwise.
Note that the Fr\'{e}chet subdifferential does not obey a sum rule as soon as not all but one addends
are smooth.
In the present situation, the simultaneous non-Lipschitzness of $q_{s,p}$ and $\delta_{U_\textup{ad}}$ 
does not even allow to apply the \emph{fuzzy} sum rule of Fr\'{e}chet subdifferential calculus, see
\cite[Theorem~2.33]{Mordukhovich2006}, and take the limit afterwards. 
Thus, one may try to evaluate the slightly weaker necessary
optimality condition $0\in\partial(f+q_{s,q}+\delta_{U_\textup{ad}})(\bar u)$ since the sum rule
for the limiting subdifferential applies to non-Lipschitz functions as well, see 
\cite[Theorem~3.36]{Mordukhovich2006}. Unluckily, this will not be a straight task since 
both of the functionals $q_{s,p}$ and $\delta_{U_\textup{ad}}$ are not so-called 
sequentially normally epi-compact on their respective domains, see comments at the end of
\cref{sec:variational_analysis} and the proofs of \cref{cor:Lipschitzness_qs0,cor:Lipschitzness_qsp}.
Nevertheless, for particular choices of $U_\textup{ad}$ like box-constrained sets,
there might be a chance to show validity of the sum rule by inherent problem structure and, thus,
obtain necessary optimality conditions in terms of the limiting subdifferential.

\section*{Acknowledgments}
This work is supported by the DFG Grant \emph{Bilevel Optimal Control: Theory, Algorithms, and Applications}
(Grant No.\ WA 3636/4-2) within the Priority Program SPP 1962 (Non-smooth
and Complementarity-based Distributed Parameter Systems: Simulation and Hierarchical Optimization).


\begin{thebibliography}{20}
\providecommand{\natexlab}[1]{#1}
\providecommand{\url}[1]{\texttt{#1}}
\expandafter\ifx\csname urlstyle\endcsname\relax
  \providecommand{\doi}[1]{doi: #1}\else
  \providecommand{\doi}{doi: \begingroup \urlstyle{rm}\Url}\fi

\bibitem[Bogachev(2007)]{Bogachev2007}
V.~I. Bogachev.
\newblock \emph{Measure Theory}.
\newblock Springer, Berlin, 2007.
\newblock \doi{10.1007/978-3-540-34514-5}.

\bibitem[Casas and Wachsmuth(2020)]{CasasWachsmuth2020}
E.~Casas and D.~Wachsmuth.
\newblock First and second order conditions for optimal control problems with
  an {$L^0$} term in the cost functional.
\newblock \emph{SIAM Journal on Control and Optimization}, 58\penalty0
  (6):\penalty0 3486--3507, 2020.
\newblock \doi{10.1137/20M1318377}.

\bibitem[Cases et~al.(2012)Cases, Herzog, and
  Wachsmuth]{CasasHerzogWachsmuth2012}
E.~Cases, R.~Herzog, and G.~Wachsmuth.
\newblock Optimality conditions and error analysis of semilinear elliptic
  control problems with {$L^1$} cost functional.
\newblock \emph{SIAM Journal on Optimization}, 22\penalty0 (3):\penalty0
  795--820, 2012.
\newblock \doi{10.1137/110834366}.

\bibitem[Chieu(2009)]{Chieu2009}
N.~H. Chieu.
\newblock The {F}r\'{e}chet and limiting subdifferentials of integral
  functionals on the spaces {$L^1(\Omega,E)$}.
\newblock \emph{Journal of Mathematical Analysis and Applications},
  360\penalty0 (2):\penalty0 704--710, 2009.
\newblock \doi{10.1016/j.jmaa.2009.07.017}.

\bibitem[Clarke(1983)]{Clarke1983}
F.~H. Clarke.
\newblock \emph{Optimization and Nonsmooth Analysis}.
\newblock Wiley, New York, 1983.
\newblock \doi{10.1137/1.9781611971309}.

\bibitem[Correa et~al.(2020)Correa, Hantoute, and
  Pérez-Aros]{CorreaHantoutePerezAros2020}
R.~Correa, A.~Hantoute, and P.~Pérez-Aros.
\newblock Subdifferential calculus rules for possibly nonconvex integral
  functions.
\newblock \emph{SIAM Journal on Control and Optimization}, 58\penalty0
  (1):\penalty0 462--484, 2020.
\newblock \doi{10.1137/18M1176476}.

\bibitem[Giner(2017)]{Giner2017}
E.~Giner.
\newblock Clarke and limiting subdifferentials of integral functionals.
\newblock \emph{Journal of Convex Analysis}, 24\penalty0 (2):\penalty0
  661--678, 2017.
\newblock URL \url{https://www.heldermann.de/JCA/JCA24/JCA242/jca24041.htm}.

\bibitem[Giner and Penot(2018)]{GinerPenot2018}
E.~Giner and J.-P. Penot.
\newblock Subdifferentiation of integral functionals.
\newblock \emph{Mathematical Programming}, 168:\penalty0 401--431, 2018.
\newblock \doi{10.1007/s10107-017-1204-x}.

\bibitem[Goldberg et~al.(1992)Goldberg, Kampowsky, and
  Tr{\"o}ltzsch]{GoldbergKampowskyTroeltzsch1992}
H.~Goldberg, W.~Kampowsky, and F.~Tr{\"o}ltzsch.
\newblock On {N}emytskij operators in {$L_p$}-spaces of abstract functions.
\newblock \emph{Mathematische Nachrichten}, 155\penalty0 (1):\penalty0
  127--140, 1992.
\newblock \doi{10.1002/mana.19921550110}.

\bibitem[Ito and Kunisch(2014)]{ItoKunisch2014}
K.~Ito and K.~Kunisch.
\newblock Optimal control with {$L^p(\Omega)$}, $p\in [0,1)$, control cost.
\newblock \emph{SIAM Journal on Control and Optimization}, 52\penalty0
  (2):\penalty0 1251--1275, 2014.
\newblock \doi{10.1137/120896529}.

\bibitem[Mehlitz(2019)]{Mehlitz2019}
P.~Mehlitz.
\newblock On the sequential normal compactness condition and its
  restrictiveness in selected function spaces.
\newblock \emph{Set-Valued and Variational Analysis}, 27\penalty0 (3):\penalty0
  763--782, 2019.
\newblock \doi{10.1007/s11228-018-0475-6}.

\bibitem[Merino(2019)]{Merino2019}
P.~Merino.
\newblock A difference-of-convex functions approach for sparse {PDE} optimal
  control problems with nonconvex cost.
\newblock \emph{Computational Optimization and Applications}, 74:\penalty0
  225--258, 2019.
\newblock \doi{10.1007/s10589-019-00101-0}.

\bibitem[Mordukhovich(2006)]{Mordukhovich2006}
B.~S. Mordukhovich.
\newblock \emph{Variational Analysis and Generalized Differentiation, Part I:
  Basic Theory, Part II: Applications}.
\newblock Springer, Berlin, 2006.
\newblock \doi{10.1007/3-540-31247-1}.

\bibitem[Mordukhovich and Sagara(2018)]{MordukhovichSagara2018}
B.~S. Mordukhovich and N.~Sagara.
\newblock Subdifferentials of nonconvex integral functions in {B}anach spaces
  with applications to stochastic dynamic programming.
\newblock \emph{Journal of Convex Analysis}, 25\penalty0 (2):\penalty0
  643--673, 2018.
\newblock URL \url{https://www.heldermann.de/JCA/JCA25/JCA252/jca25039.htm}.

\bibitem[Natemeyer and Wachsmuth(2020)]{NatemeyerWachsmuth2020}
C.~Natemeyer and D.~Wachsmuth.
\newblock A proximal gradient method for control problems with nonsmooth and
  nonconvex control cost.
\newblock Technical report, preprint arXiv, 2020.
\newblock URL \url{https://arxiv.org/abs/2007.11426}.

\bibitem[Penot(2011)]{Penot2011}
J.-P. Penot.
\newblock Image space approach and subdifferentials of integral functionals.
\newblock \emph{Optimization}, 60\penalty0 (1-2):\penalty0 69--87, 2011.
\newblock \doi{10.1080/02331934.2010.505651}.

\bibitem[Stadler(2009)]{Stadler2009}
G.~Stadler.
\newblock Elliptic optimal control problems with {$L^1$}-cost and applications
  for the placement of control devices.
\newblock \emph{Computational Optimization and Applications}, 44:\penalty0 159,
  2009.
\newblock \doi{10.1007/s10589-007-9150-9}.

\bibitem[Vossen and Maurer(2006)]{VossenMaurer2006}
G.~Vossen and H.~Maurer.
\newblock On {$L^1$}-minimization in optimal control and applications to
  robotics.
\newblock \emph{Optimal Control Applications and Methods}, 27\penalty0
  (6):\penalty0 301--321, 2006.
\newblock \doi{10.1002/oca.781}.

\bibitem[Wachsmuth(2019)]{Wachsmuth2019}
D.~Wachsmuth.
\newblock Iterative hard-thresholding applied to optimal control problems with
  {$L^0(\Omega)$} control cost.
\newblock \emph{SIAM Journal on Control and Optimization}, 57\penalty0
  (2):\penalty0 854--879, 2019.
\newblock \doi{10.1137/18M1194602}.

\bibitem[Wachsmuth and Wachsmuth(2011)]{WachsmuthWachsmuth2011}
G.~Wachsmuth and D.~Wachsmuth.
\newblock Convergence and regularization results for optimal control problems
  with sparsity functional.
\newblock \emph{ESAIM: Control, Optimisation and Calculus of Variations},
  17\penalty0 (3):\penalty0 858--886, 2011.
\newblock \doi{10.1051/cocv/2010027}.

\end{thebibliography}

\end{document}